\documentclass[a4paper, USenglish, cleveref, autoref, thm-restate]{lipics-v2021}

\usepackage{amsmath,amssymb}
\usepackage{mathabx}
\usepackage{subcaption}
\usepackage{lineno}
\usepackage[usenames,dvipsnames]{xcolor}
\usepackage[nocompress,noadjust]{cite}
\usepackage{microtype}
\usepackage{xspace}
\usepackage[basic]{complexity}

\newcommand{\facetype}[4]{[#1,#2,#3,#4]\xspace}
\newcommand{\goodconfig}{bad configuration\xspace}
\newcommand{\mixedconfig}{mixed configuration\xspace}

\newcounter{inlineenum}
\renewcommand{\theinlineenum}{\roman{inlineenum}}
\newenvironment{inlineenum}
  {\unskip\ignorespaces\setcounter{inlineenum}{0}%
   \renewcommand{\item}{\refstepcounter{inlineenum}{({\theinlineenum})~}}}
  {\ignorespacesafterend}
\pdfoutput=1 
\hideLIPIcs 
\graphicspath{{./figures/}}

\title{Linear Layouts of Bipartite Planar Graphs}

\author{Henry Förster}{Department of Computer Science, University of Tübingen, Tübingen, Germany}{henry.foerster@uni-tuebingen.de}{https://orcid.org/0000-0002-1441-4189}{}
\author{Michael Kaufmann}{Department of Computer Science, University of Tübingen, Tübingen, Germany}{michael.kaufmann@uni-tuebingen.de}{https://orcid.org/0000-0001-9186-3538}{}
\author{Laura Merker}{Institute of Theoretical Informatics, Karlsruhe Institute of Technology, Germany}{laura.merker2@kit.edu}{https://orcid.org/0000-0003-1961-4531}{}
\author{Sergey Pupyrev}{Meta Platforms Inc., Menlo Park, CA, USA}{spupyrev@gmail.com}{https://orcid.org/0000-0003-4089-673X}{}
\author{Chrysanthi N. Raftopoulou}{National Technical University of Athens, Greece}{crisraft@mail.ntua.gr}{https://orcid.org/0000-0001-6457-516X}{}

\authorrunning{H. Förster, M. Kaufmann, L. Merker, S. Pupyrev, C. N. Raftopoulou}
\Copyright{Henry Förster, Michael Kaufmann, Laura Merker, Sergey Pupyrev, Chrysanthi Raftopoulou}

\keywords{bipartite planar graphs, queue number, mixed linear layouts, graph product structure}
\relatedversion{An extended abstract of this paper appears in the Proceedings of the 18th International Symposium on Algorithms and Data Structures, \textsc{wads} 2023} 
\acknowledgements{This research was initiated at the \textsc{gnv} workshop in Heiligkreuztal, Germany, June 26 -- July 1, 2022, organized by Michalis Bekos and Michael Kaufmann.
Thanks to the organizers and other participants for creating a productive environment.}

\renewcommand{\leq}{\leqslant}
\renewcommand{\geq}{\geqslant}
\DeclareMathOperator{\qn}{qn}
\DeclareMathOperator{\sn}{sn}

\newcommand{\df}[1]{{\it #1}}

\usepackage{thm-restate}

\begin{document}
\maketitle              
\begin{abstract}
A linear layout of a graph $ G $ consists of a linear order $\prec$ of the vertices and a partition of the edges. 
A part is called a \emph{queue} (\emph{stack}) if no two edges nest (cross), that is, two edges $ (v,w) $ and $ (x,y) $ with $ v \prec x \prec y \prec w $ ($ v \prec x \prec w \prec y $) may not be in the same queue (stack).
The best known lower and upper bounds for the number of queues needed for planar graphs are 4 [Alam et al., Algorithmica 2020] and 42 [Bekos et al., Algorithmica 2022], respectively.
While queue layouts of special classes of planar graphs have received increased attention following the breakthrough result of [Dujmović et al., J.\ ACM 2020], the meaningful class of bipartite planar graphs has remained elusive so far, explicitly asked for by Bekos et al. In this paper we investigate bipartite planar graphs and give an improved upper bound of 28 by refining existing techniques.
In contrast, we show that two queues or one queue together with one stack do not suffice; the latter answers an open question by Pupyrev [GD 2018].
We further investigate subclasses of bipartite planar graphs and give improved upper bounds; in particular we construct 5-queue layouts for 2-degenerate quadrangulations.
\end{abstract}

\section{Introduction}

Since the 1980s, linear graph layouts have been a central combinatorial problem in topological graph theory, with a wealth of publications \cite{DBLP:journals/jct/BernhartK79, CLR87, DBLP:journals/jcss/Yannakakis89, HR92, DBLP:journals/siamdm/HeathLR92, DMW05, DBLP:journals/algorithmica/BekosBKR17, DBLP:journals/jgaa/DujmovicF18, DJMMUW20, BGR22}. A \emph{linear layout} of a graph consists of a linear order $\prec$ of the vertices and a partition of the edges into \emph{stacks} and \emph{queues}. A part is called a \emph{queue} (\emph{stack}) if no two edges of this part nest (cross), that is, two edges $ (v,w) $ and $ (x,y) $ with $ v \prec x \prec y \prec w $ ($ v \prec x \prec w \prec y $) may not be in the same queue (stack).
Most notably, research has focused on so-called \emph{stack layouts} (also known as book-embeddings) and \emph{queue layouts} where  either all parts are \emph{stacks} or all parts are \emph{queues}, respectively. While these kinds of graph layouts appear quite restrictive on first~sight, they are in fact quite important in practice. For instance, stack layouts are used as a model for chip design~\cite{CLR87}, while queue layouts find applications in three-dimensional network visualization~\cite{DBLP:journals/jgaa/BiedlSWW99,DMW05,DBLP:conf/gd/DujmovicW03}. For these applications, it is important that the edges are partitioned into as few stacks or queues as possible. This notion is captured by the \emph{stack number}, $\sn(G)$, and \emph{queue number}, $\qn(G)$, of a graph $G$, which denote how many stacks or queues are required in a stack and a queue layout of $G$, respectively.
Similarly, \emph{mixed linear layouts}, where both stacks and queues are allowed, have emerged as a research direction in the past few years~\cite{ABKM20,CKN19,Pup18}. 

Recently, queue layouts have received much attention as several breakthroughs were made which pushed the field further. Introduced in 1992~\cite{HR92}, it was conjectured in the same year~\cite{DBLP:journals/siamdm/HeathLR92}, that all planar graphs have a bounded queue number. Despite various attempts at settling the conjecture~\cite{BDDEW15,DBLP:journals/siamcomp/BattistaFP13,DBLP:journals/jgaa/DujmovicF18,DBLP:journals/siamcomp/BekosFGMMRU19}, it remained unanswered for almost 30 years. In 2019, the conjecture was finally affirmed by Dujmovi\'c, Joret, Micek, Morin, Ueckerdt and Wood~\cite{DJMMUW20}. Their proof relies on three ingredients: First, it was already known that graphs of bounded treewidth have bounded queue number~\cite{Wie17}. Second, they showed that the \emph{strong product}  of a graph of bounded queue number and a path has bounded queue number. Finally, and most importantly, they proved that every planar graph is a subgraph of the \emph{strong product} of a path and a graph of 
treewidth at most $8$.  In the few years following the result, both queue layouts~\cite{DBLP:conf/gd/AlamBG0P20,DBLP:journals/jgaa/BhoreGMN22,BGR22,DBLP:journals/combinatorica/DujmovicEHMW22,DBLP:conf/gd/MerkerU20} and graph product structure~\cite{DBLP:journals/corr/abs-2204-11495,DBLP:conf/swat/BoseMO22,DBLP:journals/corr/abs-2206-02395,DBLP:journals/algorithmica/Morin21,DBLP:journals/combinatorics/UeckerdtWY22,DBLP:journals/corr/abs-2208-10074} have become important research directions. Yet, after all recent developments, the best known upper bound for the queue number of planar graphs is 42~\cite{BGR22} whereas the best known corresponding lower bound is 4~\cite{ABGKP18}. This stands in contrast to a tight bound of 4 for the stack number of planar graphs~\cite{DBLP:journals/jocg/KaufmannBKPRU20,DBLP:journals/jcss/Yannakakis89}.

It is noteworthy that better upper bounds of the queue number are known only for certain subclasses of planar graphs, such as planar $3$-trees~\cite{ABGKP18} and posets~\cite{DBLP:conf/gd/AlamBG0P20}, or for relaxed variants of the queue number~\cite{DBLP:conf/gd/MerkerU20}. It remains elusive how other properties of a graph, such~as a bounded degree or bipartiteness, can be used to 
reduce the gap between the lower and the upper bounds on the queue number; see also the open problems raised in~\cite{BGR22} which contains the currently best upper bound. This is partially due to the fact that it is not well understood how these properties translate into the product structure of the associated graph classes. In fact, the product structure theorem has been improved for general planar graphs~\cite{DBLP:journals/combinatorics/UeckerdtWY22} while, to the best of our knowledge, there are very few results  that yield stronger properties for subclasses thereof.
Here, we contribute to this line of research by studying bipartite planar~graphs.

\subparagraph{Results.}
Our paper focuses on the queue number of bipartite graphs and subclasses thereof.
We start by revisiting results from the existing literature in Section~\ref{sec:prel}. 
In particular, we discuss techniques that are used to bound the queue number of general planar graphs by 42~\cite{DJMMUW20,BGR22} and refine them to obtain an improved upper bound on the queue number of bipartite planar graphs. 

\begin{restatable}{theorem}{qnBipartiteUB}
    \label{thm:qn-bipartite-ub}
    The queue number of bipartite planar graphs is at most $28$.
\end{restatable}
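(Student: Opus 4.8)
The plan is to take the product-structure construction behind the bound of $42$ for general planar graphs, recalled in Section~\ref{sec:prel}, and to switch off the part of it that handles intra-layer edges, which bipartiteness renders empty. First I would fix a product-structure decomposition $G \subseteq H \boxtimes P$ in which $H$ has bounded treewidth (and hence a queue layout with a bounded number $q$ of queues) and the copies of $H$ along the path $P$ are indexed by the layers of a BFS layering of $G$. Writing each vertex as a pair $(h,i)$ with $h \in V(H)$ and layer index $i$, I would order $V(G)$ lexicographically by $(i, \text{position of } h \text{ in the fixed order of } H)$.

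With this order every edge of $H \boxtimes P$ falls into one of three groups, according to the layers of its endpoints: \emph{same-layer} edges, with $i_1 = i_2$ and $h_1 h_2 \in E(H)$; and two orientations of \emph{cross-layer} edges, with $|i_1 - i_2| = 1$, distinguished by whether the endpoint in the lower layer has the smaller or the larger position in $H$. A routine nesting analysis shows that each group reuses the queue structure of $H$ faithfully, so each group costs $q$ queues (the cheap \emph{vertical} edges, joining the two copies of one $H$-vertex, can be folded into these queues), and the three groups together give $3q = 42$, that is, $q = 14$.

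The decisive observation is that bipartiteness empties the same-layer group entirely. Since $G$ is bipartite and the layering is a BFS layering, $G$ has no edge with both endpoints in one layer: such an edge, together with the two equal-length tree paths from its endpoints to the root, would close an odd cycle. Hence only the two cross-layer groups survive, and I would simply discard the $q$ queues that the general construction reserves for same-layer edges. What remains is $2q = 28$ queues, as claimed.

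The hard part will be the bookkeeping rather than any new idea. I would need to verify that the layering furnished by the product-structure theorem can be taken to be a BFS (hence bipartition-respecting) layering, so that the ``no same-layer edge'' claim is legitimate; and then revisit the exact queue assignment behind the $42$ bound to confirm that the same-layer group is cleanly separable --- that deleting its queues leaves the cross-layer (and folded-in vertical) queues nesting-free --- and that the surviving queues really pack into exactly $2q = 28$ rather than a weaker constant.
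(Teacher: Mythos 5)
Your skeleton is the paper's: take the DJMMUW/BGR product-structure layout with a BFS layering, observe that a BFS layering of a bipartite graph admits no intra-layer edge (your odd-cycle argument is exactly right), and delete the queues that the $42$-queue construction reserves for intra-layer edges, landing on $42-14=28$. However, your internal accounting $42 = 3q$ with $q = \qn(H) = 14$ misdescribes the construction, and the claim that each of your three groups ``reuses the queue structure of $H$ faithfully'' at cost $q$, with vertical edges folded in, is false. In the actual framework $G \subseteq H \boxtimes P \boxtimes K_\ell$ with layered width $\ell = 3$ and $H$ a planar $3$-tree, so $\qn(H) \leq 5$; by writing vertices as pairs $(h,i)$ you have silently set $\ell = 1$. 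The $42$ of Bekos et al.\ decomposes as $1 + 2 + 13 + 13 + 13$: one queue for intra-layer intra-bag edges (E.\ref{e:intraLayer1}), two for inter-layer intra-bag edges (E.\ref{e:inter0}) --- these occupy their own queues and cannot be merged into the inter-bag ones --- and $13$ for each of the three inter-bag classes (E.\ref{e:intraLayer2})--(E.\ref{e:inter2}), where $13$ improves the naive $\ell\cdot\qn(H)=15$ and depends on a carefully chosen within-layer vertex order (modification B.\ref{bgr:4}), not on ordering each layer by position in $H$. Your final number survives only because the two intra-layer classes happen to cost $1 + 13 = 14$ in the correct accounting, so deleting them leaves $2 + 13 + 13 = 28$ (this is precisely \cref{lem:bichromaticQueueLayout} combined with \cref{lem:bgr}); it is not an instance of a $3 \times 14$ symmetry.

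The step you defer as ``verify that the layering furnished by the product-structure theorem can be taken to be a BFS layering'' is the other genuine gap, and it needs a construction rather than verification. \Cref{thm:djmmuwBfsLayering} does promise a partition for every BFS layering, but its proof assumes the input is triangulated, and a bipartite graph cannot be triangulated without leaving the class --- so you may not feed the bipartite graph's own BFS layering (the only one your odd-cycle argument speaks about) directly into the theorem. The paper closes this hole with \cref{thm:bichromaticBfsLayering}: quadrangulate $G$, take a BFS layering of the quadrangulation, and triangulate each quadrangle by the diagonal between its two same-parity vertices; in such a layering this diagonal always joins two vertices of the same layer, so the layering remains valid for the triangulated supergraph, \cref{thm:djmmuwBfsLayering} applies to it, and the added diagonals are not edges of $G$, so they cost nothing. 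Finally, one must check --- as the paper does in the proof of \cref{thm:qn-bipartite-ub} --- that the modifications (B.\ref{bgr:1})--(B.\ref{bgr:4}) behind the $2$- and $13$-queue bounds do not interfere with this bichromatic layering; without that check the subtraction from $42$ is not licensed.
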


We then improve this bound for interesting subfamilies of bipartite planar graphs. 
For this we first prove a product structure theorem for stacked quadrangulations, which is a family of graphs that may be regarded as a bipartite variant of planar 3-trees.
We remark that we avoid the path factor that is present in most known product structure theorems.

\begin{restatable}{theorem}{productStackedQuadrangulations}
\label{thm:product-stacked-quadrangulations}
    Every stacked quadrangulation is a subgraph of $ H \boxtimes C_4 $, where $ H $ is a planar $ 3 $-tree.
\end{restatable}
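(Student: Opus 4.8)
The plan is to prove the statement by induction on the construction sequence of the stacked quadrangulation $Q$, building the planar $3$-tree $H$ and an embedding of $Q$ into $H\boxtimes C_4$ in lock-step. I write $C_4$ on the vertex set $\mathbb{Z}_4=\{0,1,2,3\}$ with $i\sim i+1 \pmod 4$, and recall that realizing $Q$ as a subgraph of $H\boxtimes C_4$ amounts to specifying a \emph{column} map $\pi\colon V(Q)\to V(H)$ and a \emph{layer} map $\lambda\colon V(Q)\to\mathbb{Z}_4$ such that $\phi=(\pi,\lambda)$ is injective and every edge $xy$ of $Q$ satisfies either $\pi(x)=\pi(y)$ with $\lambda(x)\sim\lambda(y)$, or $\pi(x)\sim\pi(y)$ in $H$ with $\lambda(x)=\lambda(y)$ or $\lambda(x)\sim\lambda(y)$. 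Since $Q$ is bipartite I keep the two color classes on the even layers $\{0,2\}$ and the odd layers $\{1,3\}$; this automatically forbids the monochromatic same-layer edges between adjacent columns, so the realized subgraph stays bipartite without extra effort.

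For the base case $Q=C_4$ I take $H=K_3$ and place all four vertices in a single column on the layers $0,1,2,3$; the $C_4$ of that column is exactly $Q$, which is precisely where the cyclic $C_4$ factor (rather than a path) is essential. For the inductive step, recall that $Q$ is obtained from a smaller stacked quadrangulation by choosing a quadrangular face $F=(a,b,c,d)$ and inserting a degree-$2$ vertex $v$ adjacent to one same-colored diagonal, say $\{a,c\}$. The invariant I carry is a \emph{tiling}: to every quadrangular face $F$ I assign a triangular face $T(F)$ of the current $H$ so that all four corner columns $\pi(a),\pi(b),\pi(c),\pi(d)$ lie among the three corners of $T(F)$, with opposite corners of $F$ receiving opposite $C_4$-layers. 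Because the three corners of $T(F)$ are pairwise adjacent in $H$, any two corners of $F$ have many common neighbours in $H\boxtimes C_4$ available to host $v$.

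The step splits into two cases. If some product vertex adjacent to both $\phi(a)$ and $\phi(c)$, lying in a column of $T(F)$ on the odd layer forced by the bipartition, is still unused, I place $v$ there, leave $H$ unchanged, and assign both children $(a,b,c,v)$ and $(a,v,c,d)$ the same tile $T(F)$. Otherwise the at most four layers of the columns of $T(F)$ are exhausted at the relevant positions, and I refine $H$ by stacking a fresh vertex $s$ into the triangular face $T(F)$; this is a planar-$3$-tree operation that makes $s$ adjacent to all three corners of $T(F)$, so the new column $s$ supplies four fresh layers adjacent to every column of the old tile, and I place $v$ in column $s$. When the joined diagonal is co-columnar, say $a,c\in p$, this reassignment is clean: the children get corner columns $\{p,\pi(b),s\}$ and $\{p,\pi(d),s\}$, each a triangular face of the refined $H$ incident to $s$.

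The main obstacle is exactly the interplay of this refinement with the layer bookkeeping. After an insertion the opposite diagonal is typically split across two columns of the tile, so a later insertion may have to join a diagonal whose endpoints lie in different columns; I then must place $v$ in one of those two columns on a free layer and still argue that both children are tileable, and I must guarantee that the required layers are free, i.e.\ that fresh columns have been introduced often enough. Making the induction go through therefore hinges on choosing $\lambda$ so that every insertion meets a free common-neighbour slot inside the current tile except precisely when all four layers of its columns are genuinely used up, at which moment a single planar-$3$-tree refinement of $H$ restores capacity with the correct adjacencies. Calibrating the invariant to survive insertions on either diagonal while synchronising the $C_4$-layer capacity with the $H$-refinements is the crux; the remaining verifications, namely that the edges $va$ and $vc$ are realized, that $\phi$ stays injective, and that the even/odd bipartition is respected, are routine.
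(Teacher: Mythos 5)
There is a genuine gap, and it is at the very first step: you have induced over the wrong recursive construction. A stacked quadrangulation is \emph{not} built by inserting a single degree-$2$ vertex on a diagonal of a quadrangular face --- that is the construction of the $2$-degenerate quadrangulations $G_d(w)$ from \cref{sec:lower-bounds} of the paper. By definition, a stacking step inserts an entire plane $4$-cycle into a face and joins it to the four face vertices by a planar matching, so all four new vertices arrive with degree $3$. Your inductive scheme can never reach these graphs: any graph built by degree-$2$ insertions is $2$-degenerate, whereas already the result of a single stacking step (outer square, inner square, matching --- the $3$-cube) is $3$-regular, hence has no vertex of degree $2$ to remove. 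So your induction proves a statement about a different (and incomparable beyond the base case) graph class. Separately, even for the class you do handle, the proof is incomplete by your own admission: the ``calibration'' of the tiling invariant --- guaranteeing a free common-neighbour slot exactly until all layers are exhausted, and re-tiling both children after a refinement when the joined diagonal is split across two columns --- is exactly the hard part, and it is left unargued.

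The paper's proof avoids your capacity bookkeeping entirely, and it is worth internalizing why. It takes the $H$-partition from \cref{thm:product-matching-stacked} in which \emph{each inserted $4$-cycle gets its own fresh bag}, i.e.\ its own node of $H$ (this is what makes $H$ a planar $3$-tree), and the four vertices of each bag occupy the full $C_4$-fiber over that node, labeled $0,1,2,3$ consecutively along the cycle with a choice of offset. Then no injectivity or ``free slot'' question ever arises; the only thing to verify is that the offset of each newly stacked $4$-cycle can be chosen so that the four matching edges join vertices whose labels differ by exactly $1 \pmod 4$. This reduces to a finite case analysis of the three possible label patterns a face can carry (types $i,i{+}1,i{+}2,i{+}3$; $i,i{+}1,i{+}2,i{+}1$; and $i,i{+}1,i,i{+}1$; see \cref{fig:product-C_4}), each of which admits a valid offset and only reproduces faces of these three types. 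If you want to salvage your attempt, the lesson is to let the column map send each stacked $4$-cycle to a \emph{new} node of $H$ rather than packing vertices into existing columns; the difficulty you identified as the crux then disappears.
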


In fact, our result generalizes to similarly constructed graph classes.
Based on \cref{thm:product-stacked-quadrangulations}, we improve the upper bound on the queue number of stacked quadrangulations.

\begin{restatable}{theorem}{qnStackedQuadrangulationsUB}
\label{thm:qn-stacked-quadrangulations-ub}
    The queue number of stacked quadrangulations is at most $ 21 $.
\end{restatable}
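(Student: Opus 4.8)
The plan is to read the bound straight off the product structure of \cref{thm:product-stacked-quadrangulations}. Let $G$ be a stacked quadrangulation; then $G$ is a subgraph of $H\boxtimes C_4$ for some planar $3$-tree $H$, and since the queue number does not increase when passing to subgraphs, it suffices to bound $\qn(H\boxtimes C_4)$. I would combine two ingredients: the known bound $\qn(H)\le 5$ for planar $3$-trees~\cite{ABGKP18}, and a product lemma of the shape $\qn(H\boxtimes C_4)\le 4\,\qn(H)+1$. Together these yield $\qn(G)\le 4\cdot 5+1=21$, so the entire task reduces to establishing the product lemma.

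For the product lemma I would construct an explicit queue layout of $H\boxtimes C_4$ by blowing up a fixed $\qn(H)$-queue layout of $H$. Writing each vertex as a pair $(v,c)$ with $v\in V(H)$ and $c\in V(C_4)$, I order the pairs lexicographically: first by the position of $v$ in the queue order of $H$, so that the four copies of each $v$ form one contiguous block, and then by the fixed linear order $1,2,4,3$ of the vertices of $C_4$. The point of choosing $1,2,4,3$ rather than the cyclic order is that under it the four edges of $C_4$ become pairwise non-nesting arcs; consequently all \emph{fiber edges} $(v,c)(v,c')$ with $cc'\in E(C_4)$ live inside pairwise disjoint blocks and occupy a single queue. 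Every remaining edge joins two distinct blocks: for each edge $uv\in E(H)$ it realises one of the twelve patterns $(c,d)$ with $c=d$ (the copies of $H$) or $cd\in E(C_4)$ (the diagonals) between block $u$ and block $v$. Grouping these inter-block edges by the queue $Q$ of $H$ that contains $uv$, it remains to partition, for each $Q$, its twelve-pattern edges into four queues.

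I expect the partition of the inter-block edges to be the main obstacle. For a single edge $uv$ the twelve patterns nest according to a partial order on the pattern positions, and under the ranking induced by $1,2,4,3$ its longest chain has length three, so the within-block-pair nestings alone would be handled by three classes. The genuine difficulty, and the reason the bound is $21$ and not smaller, is the interaction between different $H$-edges of the same queue $Q$ that share an endpoint: two such edges meet in a common block, and whether their product edges nest is then governed solely by the ranks in that shared block, coupling the classes across block pairs. The key steps are therefore to check, for each way two non-nesting $H$-edges of $Q$ can share a vertex, that a single global assignment of the twelve patterns to four colour classes avoids every induced nesting, and to verify that three classes do not suffice once these shared-block constraints are imposed. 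Carrying out this case analysis, together with confirming that each colour class is indeed a valid queue, is the delicate part; the rest is bookkeeping on the lexicographic order, and it yields $\qn(H\boxtimes C_4)\le 4\,\qn(H)+1\le 21$.
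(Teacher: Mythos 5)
Your overall route coincides with the paper's: reduce via \cref{thm:product-stacked-quadrangulations} and subgraph-monotonicity of the queue number to bounding $\qn(H \boxtimes C_4)$, then combine $\qn(H)\le 5$ for planar $3$-trees~\cite{ABGKP18} with the product inequality $\qn(H \boxtimes C_4) \le 4\,\qn(H)+1$. The difference is that the paper does not prove the product inequality at all; it cites Lemma~9 of \cite{DJMMUW20}, which states $\qn(H_1 \boxtimes H_2) \le |V(H_2)|\cdot\qn(H_1) + \qn(H_2)$, so its entire proof is the computation $4\cdot 5+1=21$. You instead sketch a self-contained proof of the product lemma, and that sketch contains a genuine gap.

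The step you defer --- finding ``a single global assignment of the twelve patterns to four colour classes'' that avoids every induced nesting --- is provably impossible, so the postponed case analysis cannot succeed. Write each inter-block pattern as an ordered pair $(c,d)$ with $c$ the label in the earlier block, and let $p(\cdot)$ denote position in your inner order $1,2,4,3$. If a queue $Q$ of the layout of $H$ contains two edges $vw$ and $vw'$ with $v \prec w \prec w'$ (a shared earlier endpoint, which is perfectly legal inside a queue), then two class-mates $(c,d)$ and $(c',d')$ with $c\neq c'$ always produce a nesting: placing the pattern with smaller first position on the farther edge, say $p(c')<p(c)$, gives $(v,c') \prec (v,c) \prec (w,d) \prec (w',d')$. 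Symmetrically, if $Q$ contains $vw$ and $v'w$ with $v \prec v' \prec w$, two class-mates with $d\neq d'$ nest; concretely, $((v,2),(w,3))$ and $((v',2),(w,1))$ satisfy $(v,2)\prec(v',2)\prec(w,1)\prec(w,3)$ even though both have first label $2$. A single queue can contain both configurations simultaneously --- for instance $\{(a,c),(a,d),(b,d)\}$ with $a\prec b\prec c\prec d$ is non-nesting --- and nothing guarantees that an optimal $5$-queue layout of a planar $3$-tree avoids such queues. In that situation every colour class would have to be constant in both coordinates, i.e.\ consist of a single pattern, forcing $12$ classes per queue of $H$ rather than $4$. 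Any correct completion must therefore let the colour depend on the $H$-edge an inter-block edge lies over (or abandon the purely lexicographic order), which is exactly the content of the cited lemma; the shortest repair is to invoke \cite[Lemma~9]{DJMMUW20} as the paper does. Two minor points: your fiber-edge analysis (the order $1,2,4,3$ makes $C_4$ a one-queue graph) is correct, and verifying that three classes do not suffice is unnecessary for an upper bound.
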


Complementing our upper bounds, we provide lower bounds on the queue number and the mixed page number of bipartite planar graphs in \cref{sec:lower-bounds}.
Both results improve the state-of-the-art for bipartite planar graphs, while additionally providing a lower bound for the special case of 2-degenerate bipartite planar graphs.
We remark that \cref{thm:mixed-lb} answers a question asked in \cite{Pup18,CKN19,ABKM20}.

\begin{restatable}{theorem}{qnBipartiteLB}
\label{thm:qn-bipartite-lb}
    There is a $2$-degenerate bipartite planar graph with queue number at least~$ 3 $.
\end{restatable}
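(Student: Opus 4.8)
The plan is to reduce the problem to a statement about vertex orderings alone, and then to exhibit an explicit graph defeating every ordering. Recall the rainbow characterization of queue layouts~\cite{HR92}: a vertex ordering $\prec$ admits a partition of the edges into $k$ queues if and only if $\prec$ contains no $(k+1)$-rainbow, i.e.\ no set of $k+1$ edges that pairwise nest. Hence $\qn(G)\ge 3$ is equivalent to the assertion that \emph{every} linear order of $V(G)$ contains three pairwise nesting edges. My plan is therefore to construct a concrete $2$-degenerate bipartite planar graph $G$ and to prove that no linear order of $V(G)$ avoids a $3$-rainbow; equivalently, I assume for contradiction that $G$ has a $2$-queue layout and derive a nesting triple.

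For the construction I would use a stacked quadrangulation, built by starting from a $4$-cycle and repeatedly inserting a new vertex of degree $2$ into a quadrilateral face, joined to the two opposite corners of that face. Each insertion keeps the graph planar, preserves bipartiteness (the new vertex lies on the side opposite to its two neighbours), splits the face into two quadrilaterals, and---since the inserted vertex has degree $2$---yields a degree-$2$ elimination order, so the resulting graph is $2$-degenerate; this is exactly the family covered by \cref{thm:product-stacked-quadrangulations}. The point of iterating the insertion is to create many deeply nested $4$-cycles (frames), which is what will force nesting in the linear order. Verifying that the final graph is $2$-degenerate, bipartite, and planar is routine from the construction; the entire difficulty lies in the lower bound on the queue number.

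For the lower bound I would argue by contradiction from a hypothetical $2$-queue layout, i.e.\ a fixed order $\prec$ with no $3$-rainbow. Note first that a single $4$-cycle does not force even a nesting pair (a $4$-cycle has queue number $1$), so the argument must exploit the interaction of several nested frames rather than any one of them. The plan is to fix the two extreme vertices of $\prec$ and track how the frames of $G$ must be placed relative to them: within each frame some pair of edges is forced to nest once the positions of the frame's corners are constrained by the outer frames, and by a pigeonhole/propagation argument these forced nestings should accumulate down the nesting hierarchy until a third mutually nesting edge is unavoidable. Symmetry of the construction (reflecting $\prec$ and swapping the two colour classes) should reduce the number of essentially distinct cases. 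The main obstacle is precisely this case analysis: because the graph is sparse ($m\le 2n-4$, so density alone cannot force three queues) the nesting must be forced structurally, and controlling every possible interleaving of the frames' corners is delicate; I expect that making the construction finite and small enough to complete the case distinction by hand---or, failing that, certifying $\qn(G)\ge 3$ for the explicit finite graph by an exhaustive check over orderings---will be the crux of the proof.
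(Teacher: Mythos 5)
Your setup matches the paper's: the reduction to showing that every vertex order contains a $3$-rainbow is exactly how the paper argues, and the family of deeply nested quadrangulations obtained by repeated degree-$2$ insertions is (essentially) the paper's family $G_d(w)$. But the proposal has a genuine gap: the entire combinatorial core is deferred. The paper's proof is not a propagation of ``forced nestings'' down a chain of single nested frames; it crucially uses a \emph{width} parameter. In $G_d(w)$ every pair of parents receives $w$ children simultaneously (the proof takes $w\geq 24$, grouped into siblings and pairs), and the argument runs repeated pigeonhole steps over these many children: Claim~\ref{claim:no_good_config} shows a $2$-queue layout contains no \goodconfig at small depth because among $14$ children either three escape the parents' interval or ten lie inside it, each case yielding a $3$-rainbow or a recursed configuration; Claim~\ref{claim:nested_config} constrains nested configurations, which arise among five children of the root pair, again by pigeonhole. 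Your construction as described inserts a \emph{single} degree-$2$ vertex per face, so each parent pair has one child and no pigeonhole of this kind is available; it is not established (and the paper does not claim) that the width-$1$ version has queue number $3$. A single $4$-cycle forces nothing, as you note, but your sketch supplies no concrete mechanism by which the nesting hierarchy alone accumulates a third mutually nesting edge, and the cases you would have to control are precisely where the proof lives.

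Two further points. Your fallback of ``an exhaustive check over orderings'' is not viable as stated: the smallest member of the family known to have queue number $3$ is $G_4(4)$ with $259$ vertices, which the paper verified only with a \textsc{sat}-solver, and even that is a computational certificate your proposal neither specifies nor carries out; the paper's actual proof is the combinatorial one for $d\geq 4$, $w\geq 24$. Also a terminological slip: the degree-$2$ insertion family consists of the $2$-degenerate quadrangulations (the $(4,1)$-stacked graphs), not the stacked quadrangulations of \cref{thm:product-stacked-quadrangulations}, which insert whole $4$-cycles attached by matchings; citing that theorem for your construction is therefore off target, though harmless to the intended statement.
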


\begin{restatable}{theorem}{mixedLB}
\label{thm:mixed-lb}
    There is a $2$-degenerate bipartite planar graph that does not admit a $ 1 $-queue $ 1 $-stack layout.
\end{restatable}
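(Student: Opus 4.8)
The plan is to exhibit one explicit graph and argue that \emph{no} linear order admits the required edge partition. First I would recast a \(1\)-queue \(1\)-stack layout as a two-stage object: a linear order \(\prec\) of the vertices together with a \(2\)-coloring of the edges into a queue set \(Q\) (no two edges of \(Q\) nest) and a stack set \(S\) (no two edges of \(S\) cross). The key observation is that, \emph{once the order \(\prec\) is fixed}, the existence of a valid coloring is a \textsc{2-sat} instance: introduce a Boolean variable \(q_e\) (``\(e\) is in the queue'') for each edge, add a clause \((\lnot q_e \lor \lnot q_f)\) for every pair of edges that nest under \(\prec\), and a clause \((q_e \lor q_f)\) for every pair that cross; pairs that are disjoint or share an endpoint impose no constraint. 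Thus for each fixed order the layout question is efficiently decidable, and the entire content of the theorem is that the chosen graph produces an \emph{unsatisfiable} instance for \emph{every} order. This reformulation focuses the search on which relative orders of a few critical vertices can occur.

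\textbf{Construction.} I would take \(G\) to be a carefully chosen stacked quadrangulation in the sense of \cref{thm:product-stacked-quadrangulations}: start from an outer \(4\)-cycle and repeatedly insert a new vertex of degree \(2\) joining two opposite corners of a quadrilateral face. Every inserted vertex has degree exactly \(2\), so \(G\) is automatically \(2\)-degenerate, and joining opposite (hence same-class) corners keeps \(G\) bipartite and planar. The point of this construction is that it hands us, for free, a large supply of cheap gadgets: each degree-\(2\) vertex \(x\) with neighbors \(u,w\) is a ``cherry'' \(u\!-\!x\!-\!w\), and under any order its two edges can only be nested, crossing, or disjoint, which sharply limits the local patterns. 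I would design the stacking so that \(G\) contains many nested copies of a small obstruction gadget whose corner edges are forced to interact, with the aim that any order confines some copy to a controlled relative position on which the \textsc{2-sat} clauses chain into a contradiction.

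\textbf{Core argument.} Assuming a layout exists, I would first quotient out the obvious symmetries (reversal of \(\prec\) and the relabelings respecting the construction) to cut down the essentially distinct relative orders of the critical vertices. Then I would use pigeonhole over the queue/stack coloring of the many cherries and nested gadgets to extract a sub-configuration whose relative order is pinned down and along which the color of one edge forces the color of the next: a nesting pair rules out ``both queue,'' a crossing pair rules out ``both stack,'' and chaining these implications around the gadget should close an odd cycle, yielding either two nesting edges both in \(Q\) or two crossing edges both in \(S\) --- a contradiction in either case. Equivalently, I would show the order-independent \textsc{2-sat} instance always contains an implication path \(q_e \Rightarrow \dots \Rightarrow \lnot q_e \Rightarrow \dots \Rightarrow q_e\).

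\textbf{Main obstacle.} The hard part is taming the combinatorial explosion of relative orders: a priori the vertices may be permuted freely, so the argument stands or falls on structural lemmas that pin down the order of the crucial vertices (for instance, showing that the corners of the outer quadrilateral must interleave in only a few ways and that the inserted degree-\(2\) vertices are then forced into predictable positions). If a clean human-checkable case split does not emerge, I would fall back on making \(G\) as small as possible and certifying unsatisfiability by an exhaustive or \textsc{sat}-based search over all orders of the explicit graph, using the \textsc{2-sat} encoding above as the per-order oracle; keeping the graph small while preserving \(2\)-degeneracy and bipartiteness is exactly what makes the final case analysis tractable.
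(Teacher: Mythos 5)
Your graph family is essentially the right one, though mislabeled: repeatedly inserting a degree-$2$ vertex joined to two \emph{opposite} corners of a quadrilateral face produces exactly the $2$-degenerate quadrangulations $G_d(w)$ that the paper uses in \cref{sec:lower-bounds}, not a stacked quadrangulation in the sense of \cref{thm:product-stacked-quadrangulations} (stacking inserts a whole $4$-cycle attached by a matching, and the new vertices then have degree $3$, so genuine stacked quadrangulations are not $2$-degenerate). Your \textsc{2-sat} reformulation for a \emph{fixed} order is correct (edges sharing an endpoint indeed impose no constraint, since nesting and crossing are defined for disjoint edges), but it is content-free for the theorem: the entire difficulty is the universal quantification over orders, which the encoding does not touch.

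The genuine gap is that your ``core argument'' is a placeholder where the proof should be. You never specify the parameters of the graph, never exhibit a concrete obstruction, and never show that some pinned-down configuration is forced to exist under \emph{every} order; saying the queue/stack implications ``should close an odd cycle'' restates the goal rather than arguing for it. The paper's proof consists precisely of the material your sketch defers: positional claims showing a pair has at most two stack-colored children and at most two queue-colored children outside its parents (Claims~\ref{claim:orange} and~\ref{claim:blue}), a pruning step discarding at most four of $77$ child pairs to get a subgraph $G'$ with controlled colors, the definition of a \mixedconfig together with proofs that neither the all-queue nor the all-stack variant can occur (Claims~\ref{claim:mixed_config_blue} and~\ref{claim:mixed_config_orange}), and a final pigeonhole count (this is where $w \geq 154$ enters, extracting nine bicolored children and then five same-colored edges $(v_i,z_i)$) that plays Claim~\ref{claim:atLeastFivePairs} (every pair at depth $\leq d-2$ has five child pairs between its two vertices) against Claim~\ref{claim:noPairsAtGrandchildren} (some child pair has no child between its two vertices) to reach the contradiction. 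None of this is recoverable from your outline. Your computational fallback also does not close the gap as stated: iterating a per-order \textsc{2-sat} oracle over all vertex orders is infeasible even for modest graphs, and the smallest known witness is $G_3(5)$ with $128$ vertices; the paper's \textsc{sat} verification encodes the order and the partition jointly and serves only as a check alongside the combinatorial proof, which your proposal would still need to supply.
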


For this purpose, we use a family of $2$-degenerate quadrangulations. Finally, inspired by our lower bound construction, we conclude with investigating this graph class.

\begin{restatable}{theorem}{degenerate}
    \label{thm:qn-2-degenerate-ub}
    Every $2$-degenerate quadrangulation admits a $5$-queue layout.
\end{restatable}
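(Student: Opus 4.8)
The plan is to build the $5$-queue layout from a \emph{layered} vertex order and to use bipartiteness to decouple consecutive layers. First recall the structure of the class: peeling off vertices of degree at most $2$ shows that every $2$-degenerate quadrangulation $G$ is obtained from a $4$-cycle by repeatedly inserting a new degree-$2$ vertex into a quadrilateral face and joining it to the two \emph{opposite} corners of that face. Indeed, a degree-$2$ vertex $v$ lies on exactly two faces, which share the path formed by its two edges, so its two neighbours are opposite corners of each; deleting $v$ merges these two faces into a single quadrilateral, and the quadrangulation property is preserved. Now fix a vertex $r$ of the outer $4$-cycle and let $L_0=\{r\},L_1,L_2,\dots$ be the breadth-first layers, where $L_i$ is the set of vertices at distance $i$ from $r$. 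I order the vertices layer by layer, placing all of $L_i$ before all of $L_{i+1}$, and fix a linear order \emph{inside} each layer only later.

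The key simplification comes from bipartiteness. Since $G$ is bipartite, distance from $r$ has a fixed parity on each colour class, so every edge joins two \emph{consecutive} layers and no edge lies inside a layer. Consequently, if an edge $e$ runs from $L_i$ to $L_{i+1}$, then every vertex lying between its endpoints in the order belongs to $L_i\cup L_{i+1}$, so every edge nested inside $e$ again runs from $L_i$ to $L_{i+1}$. Hence any two nesting edges belong to the same consecutive-layer graph $G_i := G[L_i,L_{i+1}]$, and edges of distinct $G_i$ never nest. The queues may therefore be \emph{reused} across all layer pairs, and by the Heath--Rosenberg characterisation~\cite{DBLP:journals/siamdm/HeathLR92} (the number of queues forced by a fixed order equals the maximum size of a set of pairwise nesting edges) it suffices to order each layer so that every $G_i$ contains no $6$ pairwise nesting edges; this gives $\qn(G)\le 5$.

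It remains to choose the intra-layer orders and bound the nesting inside each $G_i$. I would take, in every layer, the left-to-right order in which its vertices are met along the layer in the fixed planar embedding; this single choice is consistent for the two roles of $L_i$, namely as the upper side of $G_{i-1}$ and the lower side of $G_i$. The heart of the argument is then a structural lemma: for a $2$-degenerate quadrangulation, each consecutive-layer graph $G_i$, read in this embedding order, has no long \emph{rainbow}. The \textbf{main obstacle} is precisely this lemma and the resulting constant. A long rainbow is a deeply nested matching, and although such a matching is planar in isolation, it cannot arise as a layer graph of a quadrangulation: every face is a $4$-cycle, which forces $G_i$ to be ``ladder-like'' — each region between two consecutive matched rungs must be filled by further quadrilaterals — rather than a sparse family of concentric arcs, and the degeneracy bound caps how many rungs can be simultaneously nested. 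Pinning down the exact shape of the layer graphs, and with it the constant $5$, is where both the quadrangulation and the $2$-degeneracy are genuinely used, since planarity alone does not forbid long rainbows.

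As a guide for the constant, and as a fallback, I would lean on the analogy with planar $3$-trees, the natural non-bipartite counterpart of this class, whose queue number is also at most $5$~\cite{ABGKP18}: there one splits the edges at each inserted vertex by which anchor they reach and distributes them over a constant number of queues according to layer parity, with the remaining budget absorbing the few ``flat'' edges of a layer pair. The same bookkeeping should transfer here, each inserted vertex contributing only its two anchor-edges, and I expect the count to close at $5$ once the ladder structure of the layer graphs is established.
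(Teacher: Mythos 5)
Your reduction to layer graphs is sound as far as it goes: with a BFS layering from a single root, bipartiteness does kill all intra-layer edges, nesting edges must then come from the same consecutive-layer graph $G_i = G[L_i, L_{i+1}]$, queues can be reused across the $G_i$, and by the Heath--Rosenberg correspondence between fixed-order queue number and maximum rainbow size it would indeed suffice to bound every rainbow in every $G_i$ by $5$. But that bound is precisely the theorem, and your proposal does not prove it --- you name it as ``the main obstacle'' and offer only the heuristic that the $4$-faces force a ``ladder-like'' structure whose rungs cannot nest deeply. This is a genuine gap, not a routine verification: nothing in the proposal pins down a well-defined intra-layer order (BFS layers of a planar graph need not trace a simple curve, so ``the left-to-right order along the layer in the fixed planar embedding'' is itself underspecified for both roles of $L_i$), and no argument connects $2$-degeneracy to a rainbow bound. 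Tellingly, the paper's proof deliberately abandons BFS layering: it assigns layer values recursively by face types and notes explicitly that these values ``do not necessarily correspond to a BFS-layering'' --- for instance, in a \facetype{1}{0}{1}{0}-face a vertex receives layer value $1$ where BFS would give $2$. That deviation is what makes the layer structure controllable, which suggests that your plain BFS layering may not admit the bound you need, and at minimum that establishing it would require reconstructing most of the paper's case analysis.

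The paper's actual route is also structured differently from your reduction. Rather than keeping all edges between consecutive layers, it allows intra-layer edges and proves (Lemma~\ref{lm:bfs_quad}) that the connected components of each layer-value class form the bags of a tree-partition of shadow width $4$ in which every bag induces a \emph{leveled planar} graph; the detailed face-type analysis (\facetype{0}{0}{0}{0} through \facetype{1}{0}{1}{1}) exists exactly to construct the leveled planar drawings and keep each component attached to at most the four vertices of one quadrilateral face. A general combination lemma (Lemma~\ref{lm:main}) then gives $\qn(G) \le q + k$ for bags with $q$-queue layouts and nicely ordered shadows of width $k$, yielding $1 + 4 = 5$ since leveled planar graphs have queue number $1$. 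Your ``fallback'' paragraph gestures in this direction --- it is the same scheme that gives $2 + 3 = 5$ for planar $3$-trees via outerplanar bags --- but it is not carried out, and carrying it out is where all the work of the proof lies.
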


\subparagraph{Outline.}

We start with basic results on the queue number of bipartite planar graphs in \cref{sec:prel}, where we prove \cref{thm:qn-bipartite-ub}.
\Cref{sec:product-structure} provides a definition of stacked quadrangulations and an investigation of their structure including proofs of \cref{thm:product-stacked-quadrangulations,thm:qn-stacked-quadrangulations-ub}.
We continue with lower bounds in \cref{sec:lower-bounds} and then further investigate the graphs constructed there in \cref{sec:2-degenerate}, in particular we prove \cref{thm:qn-2-degenerate-ub}.

\section{Preliminaries}
\label{sec:prel}

In this section, we introduce basic definitions and tools that we use to analyze  
the queue number of bipartite planar graphs and refine them to prove \cref{thm:qn-bipartite-ub}.

\subsection{Definitions}
\label{sec:definitions}


\subparagraph{Classes of bipartite planar graphs.} 
In this paper, we study subclasses of \emph{planar} graphs, that is graphs admitting a \emph{planar drawing}. 
A special type of planar drawings are \df{leveled planar drawings} where the vertices are placed on a sequence of parallel lines (levels) and every edge joins 
vertices in two consecutive levels. We call a graph  \df{leveled planar} if it admits a leveled planar drawing. 

A planar drawing partitions the plane into  regions called \emph{faces}.
It is well known that the \emph{maximal} planar graphs, that is, the planar graphs to which no crossing-free edge can be added, are exactly the triangulations of the plane, that is, every face is a triangle. 
We focus on the \emph{maximal bipartite planar graphs} which are exactly the quadrangulations of the plane, that is, every face is a quadrangle. In the following, we introduce  interesting families of quadrangulations.

One such family are the \emph{stacked quadrangulations} that can be constructed as follows.
First, a square is a stacked quadrangulation.
Second, if $ G $ is a stacked quadrangulation and $ f $ is a face of $ G $, then inserting a plane square $ S $ into $ f $ and connecting the four vertices of $ S $ with a planar matching to the four vertices of $ f $ again yields a stacked quadrangulation. Note that every face has four vertices, that is, the constructed graph is indeed a quadrangulation. We are particularly interested in this family of quadrangulations as stacked quadrangulations can be regarded as the bipartite variant of the well-known graph class \emph{planar $3$-trees} which are also known as  \emph{stacked triangulations}. This class again can be recursively defined as follows: A \emph{planar $3$-tree}  is either a triangle or a graph that can be obtained from a planar $3$-tree by adding a vertex $ v $ into some face $ f $ and connecting $ v $ to the three vertices of $f  $. This class is particularly interesting in the context of queue layouts as it provides the currently best lower bound on the queue number of planar graphs~\cite{ABGKP18}.

The notions of stacked triangulations and stacked quadrangulations can be generalized as follows using once again a recursive definition. For $ t \geq 3 $ and $ s \geq 1 $, any connected planar graph of order at most $ s $ is called a {$(t,s)$-stacked graph}. Moreover, for a $(t,s)$-stacked graph $ G $ and a connected planar graph $ G'$ with at most $s$ vertices, we obtain another $(t,s)$-stacked graph $ G'' $ by connecting $ G' $ to the vertices of a face $ f $ of $G$ in a planar way  such that each face of $G''$ has at most $ t $ vertices. Note that we do not require that the initial graph $G$ and the connected stacked graph $G'$ of order at most $s$ in the recursive definition have only faces of size at most $t$ as this will not be required by our results in Section~\ref{sec:product-structure}.
If in each recursive step the edges between $ G' $ and the vertices of $ f $ form a matching, then the resulting graph is called an \emph{$ (t, s) $-matching-stacked graph}.
Now, in particular $ (3, 1) $-stacked graphs are the  {planar $3$-trees} while  the {stacked quadrangulations}  are a subclass of the $ (4, 4) $-matching-stacked graphs. 
In addition, $ (3, 3) $-stacked graphs are the \emph{stacked octahedrons}, which were successfully used to construct planar graphs that require four stacks~\cite{DBLP:journals/jocg/KaufmannBKPRU20,DBLP:journals/jctb/Yannakakis20}.

In addition to graphs obtained by recursive stacking operations, we will also study graphs that are restricted by \emph{degeneracy}. Namely, we call a graph $G=(V,E)$ $d$-degenerate if there exists a total order $(v_1,\ldots,v_n)$ of $V$, so that for $1 \leq i \leq n$, $v_i$ has degree at most $d$ in the subgraph induced by vertices $(v_1,\ldots,v_i)$. Of particular interest will be a family of $2$-degenerate quadrangulations discussed in Section~\ref{sec:lower-bounds}. It is worth pointing out that there are recursive constructions for triconnected and simple quadrangulations that use the insertion of degree-$2$ vertices and $(4,4)$-matching stacking in their iterative steps~\cite{DBLP:journals/dmtcs/FelsnerHKO10}.

\subparagraph{Linear layouts.} A \emph{linear layout} of a graph $G =(V,E)$ consists of an order $\prec$ of $V$ and a partition $\mathcal{P}$ of $E$. Consider two disjoint edges $(v,w),(x,y) \in E$. We say that $(v,w)$ \emph{nests} $(x,y)$ if $ v \prec x \prec y \prec w $, 
and we say that $(v,w)$ and $(x,y)$ \emph{cross} if $ v \prec x \prec w \prec y $. 
For each part $P \in \mathcal{P}$ we require that either no two edges of $P$ nest or that no two edges of $P$ cross. 
We call $P$ a \emph{queue} in the former and a \emph{stack} in the latter case. Let $\mathcal{Q} \subseteq \mathcal{P}$ denote the set of queues and let $\mathcal{S} \subseteq \mathcal{P}$ denote the set of stacks; such a linear layout is referred to as a
\emph{$|\mathcal{Q}|$-queue $|\mathcal{S}|$-stack} layout. 
If $\mathcal{Q} \neq \emptyset$ and $\mathcal{S} \neq \emptyset$, we say that the linear layout is \emph{mixed}, while 
when $\mathcal{S}=\emptyset$, it is called a \emph{$|\mathcal{Q}|$-queue} layout. The \emph{queue number}, $\qn(G)$, of a graph $G$ 
is the minimum value $q$ such that $G$ admits a $q$-queue layout. Heath and Rosenberg~\cite{HR92} 
characterize graphs admitting a $1$-queue layout in terms of \emph{arched-leveled} layouts. 
In particular, this implies that each \emph{leveled planar graph} admits a $1$-queue layout. 
Indeed a bipartite graph has queue number 1 if and only if it is leveled planar~\cite{AG11}.

In the remainder of this subsection, we define important tools that have been used in the context of linear layouts in the past and also are essential in our proofs.

\subparagraph{Important tools.}
The \emph{strong product} $ G_1 \boxtimes G_2 $ of two graphs $ G_1 $ and $ G_2 $ is a graph with vertex set $ V(G_1) \times V(G_2) $ and an edge between two vertices $ (v_1, v_2) $ and $ (w_1, w_2) $ if \begin{inlineenum}
    \item  $ v_1 = w_1 $ and $(v_2,w_2) \in E(G_2) $,
    \item $ v_2 = w_2 $ and $(v_1,w_1) \in E(G_1) $, or 
    \item $(v_1,w_1) \in E(G_1) $ and $(v_2,w_2) \in E(G_2) $.
\end{inlineenum}

Given a graph $G$, an \df{$H$-partition} of $G$ is a pair $\big(H, \{V_x : x\in V(H)\}\big)$ consisting of a 
graph $H$ and a partition of $V$ into sets $\{V_x : x\in V(H)\}$ called \emph{bags}
such that for every edge $(u, v) \in E$ one of the following holds:
\begin{inlineenum}
\item\label{def:intrabag} $u, v \in V_x$ for some $x \in V(H)$, or 
\item\label{def:interbag}~there is an edge $(x, y)$ of $H$ with $u \in V_x$ and $v \in V_y$.
\end{inlineenum} In Case~\eqref{def:intrabag}, we call $(u,v)$ an \emph{intra-bag} edge, while we call it \emph{inter-bag} in Case~\eqref{def:interbag}.
To avoid confusion with the vertices of $G$, the vertices of $H$ are called \df{nodes}. The \emph{width} of an $ H $-partition is defined as the maximum size of a bag. 
It is easy to see that a graph $ G $ has an $ H $-partition of width $ w $ if and only if it is a subgraph of $ H \boxtimes K_w $ (compare Observation~35 of \cite{DJMMUW20}).
In the case where $H$ is a tree, we call the $H$-partition a \emph{tree-partition}.
We refer to \cref{fig:defs} for an example of a tree-partition.

A related concept to $H$-partitions are \emph{tree decompositions}. 
Given a graph $G$, a \df{tree decomposition} of $G$ is a pair $\big(T, \{B_x : x\in V(T)\}\big)$ consisting of a tree $T$ where every vertex $x \in V(T)$ is associated with a subset $B_x$, called \emph{bag}, of the vertices of $G$ so that the following hold:
\begin{inlineenum}
\item\label{def:tree-decomposition:1} $\bigcup_{x \in V(T)} B_x= V(G)$, 
\item\label{def:tree-decomposition:2} for each $(u,v) \in E(G)$ there exists at least one bag $B_x$ so that $u \in B_x$ and $v \in B_x$, and 
\item\label{def:tree-decomposition:3} for every $v \in V(G)$, the set of nodes whose bags contain $v$ induce a connected subtree of $T$.
\end{inlineenum}
Observe that in contrast to a tree-partition, the bags are in general not disjoint. The \emph{treewidth} of a tree decomposition is the cardinality of the largest bag minus one. Moreover, the \emph{treewidth} of a graph is the minimum treewidth of any of its tree decompositions.

\subsection{General Bipartite Planar Graphs}
\label{sec:general_bipartite}

Following existing works on queue layouts of planar graphs, we get the following upper bound for the class of bipartite planar graphs.

\qnBipartiteUB*


A key component of the state-of-the-art bounds on the queue number of planar graphs are $H$-partitions with low layered width.
A \emph{BFS-layering} of $G$ is a partition of $V(G)$ into $\mathcal{L}=(V_0,V_1,\ldots)$ such that $V_i$ contains exactly the vertices with graph-theoretic distance $i$ from a specified vertex $r \in V(G)$. We refer to each $V_i$ as a \emph{layer}. We say that an $H$-partition $(H,\{A_x:x \in V(H)\})$ of a graph $G$ has \emph{layered-width} $\ell$ if and only if there is a  {BFS-layering} $\mathcal{L}=(V_0,V_1,\ldots)$ of $G$ so that for every $x \in V(H)$ and every $i$ it holds that $|A_x \cap V_i| \leq \ell$. The following theorem plays an important role in computing queue layouts for planar graphs with a constant number of queues.

\begin{claim}[Theorem 15 of \cite{DJMMUW20}]\label{thm:djmmuwBfsLayering}
Every planar graph $G$ has an $H$-partition with layered-width $3$ such that $H$ is planar and has treewidth at most 3. Moreover, there is such a partition for every BFS layering of
$G$.
\end{claim}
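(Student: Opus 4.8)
This is Theorem~15 of~\cite{DJMMUW20}, and the plan is to reconstruct its \emph{tripod decomposition}. Fix the root $r$ defining the BFS-layering $\mathcal{L}=(V_0,V_1,\dots)$ together with a BFS spanning tree $T$ of $G$ rooted at $r$, so that every edge of $T$ joins consecutive layers and (since $\mathcal{L}$ is a BFS-layering) every edge of $G$ joins vertices in the same or in consecutive layers. First I would augment $G$ to a plane triangulation $G^{+}$ by adding only edges within a layer or between consecutive layers; this is possible precisely because $\mathcal{L}$ never forces an edge to skip a layer. The tree $T$ and the layering $\mathcal{L}$ remain a compatible pair for $G^{+}$. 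Since deleting the added edges can only shrink the bags, lower the layered width, and remove edges from the quotient graph, any $H$-partition of $G^{+}$ with the required properties restricts to one of $G$; hence it suffices to treat $G^{+}$.

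Call a path in $T$ \emph{vertical} if it is a subpath of some root-to-node path; such a path meets each layer $V_i$ in at most one vertex. A \emph{tripod} is a union of at most three vertical paths. The heart of the argument is the combinatorial claim that $V(G^{+})$ can be partitioned into tripods so that the \emph{quotient} graph $H$ (one node per tripod, with an edge whenever two tripods are joined by an edge of $G^{+}$) is planar and has treewidth at most $3$. Planarity of $H$ is immediate, since $H$ is a minor of the planar graph $G^{+}$. Moreover, as the bags are exactly the tripods, each bag $A_x$ has at most three vertices in each $V_i$, so the layered width is at most $3$; this disposes of two of the three conclusions at once.

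I would prove the tripod claim by the following recursion, maintaining the invariant that the region under consideration is a near-triangulation whose outer walk is the concatenation of at most three vertical paths $Q_1,Q_2,Q_3$. Inside such a region I would locate a single splitting tripod $X$ — three vertical paths whose top endpoints lie on the boundary — chosen using planarity and the downward orientation of $T$ so that removing $X$ separates the interior into at most three smaller regions, each again bounded by at most three vertical paths. Recursing on these subregions partitions the interior into tripods, and gluing in $X$ completes the region. The recursion tree then doubles as a tree decomposition of $H$: the bag at each node consists of the tripod $X$ created there together with the at most three boundary tripods $Q_1,Q_2,Q_3$ of its region, so every bag has size at most $4$ and the treewidth of $H$ is at most $3$.

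The main obstacle is exactly the existence and separating behaviour of the splitting tripod $X$ at each recursive step: one must argue, using the triangulated structure and the fact that every edge stays within one layer of the BFS tree, that three vertical paths suffice to cut the region into pieces preserving the ``three vertical boundary paths'' invariant, and that the resulting boundaries are genuine vertical paths rather than arbitrary walks. This is the technical core of~\cite{DJMMUW20}, and I would follow their case analysis on how the chosen tripod meets the three boundary paths. Finally, since the construction uses only a BFS spanning tree compatible with $\mathcal{L}$, and such a tree exists for the layering defined by any root $r$, the ``moreover'' clause — that such a partition exists for every BFS-layering of $G$ — follows immediately.
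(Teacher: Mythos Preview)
The paper does not give its own proof of this statement: it is quoted verbatim as Theorem~15 of~\cite{DJMMUW20} and used as a black box, so there is nothing in the paper to compare your argument against. What you have written is a faithful high-level reconstruction of the tripod decomposition from~\cite{DJMMUW20} --- vertical paths in a BFS tree, a recursive near-triangulation bounded by at most three such paths, a splitting tripod, and the recursion tree serving as a width-$3$ tree decomposition of the quotient --- and it correctly identifies the separating-tripod step as the technical core you are deferring to that reference.

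One remark worth making: your first step, triangulating $G$ while adding only intra-layer or adjacent-layer edges so that the \emph{given} BFS-layering survives, is not how~\cite{DJMMUW20} itself proceeds (there the graph is triangulated first and the BFS is run on the triangulation), but it is exactly the manoeuvre the present paper carries out explicitly in \cref{thm:bichromaticBfsLayering} for the bipartite case, and it is what makes the ``moreover'' clause go through. You assert this augmentation is always possible; that is true, but it deserves a one-line justification (any face of length $\geq 4$ in a BFS-layered plane graph has two non-adjacent boundary vertices whose layers differ by at most~$1$), since the paper itself flags precisely this point when it says ``the proof of \cref{thm:djmmuwBfsLayering} assumes $G$ to be triangulated.''
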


There are two noteworthy observations regarding \cref{thm:djmmuwBfsLayering}. First, for a bipartite planar graph with parts $A$ and $B$ so that $V=A \dot{\cup} B$, a BFS-layering $\mathcal{L}=(V_0,V_1,\ldots)$ is so that without loss of generality for every integer $k$, it holds $V_{2k} \cap B = \emptyset$ and $V_{2k+1} \cap A = \emptyset$. We will call such a layering \emph{bichromatic} and we say that an $H$-partition $\{A_x|x \in V(H)\}$ of a bipartite graph $G$ has \emph{bichromatic layered-width} $\ell$ if and only if there is a bichromatic BFS-layering $\mathcal{L}=(V_0,V_1,\ldots)$ of $G$ so that for every $x \in V(H)$ and every $i$ it holds that $|A_x \cap V_i| \leq \ell$. 

Second, on the other hand, the proof of \cref{thm:djmmuwBfsLayering} assumes $G$ to be triangulated. Hence, it is not immediate, that the following special case of \cref{thm:djmmuwBfsLayering} is true:

\begin{lemma}\label{thm:bichromaticBfsLayering}
 Every bipartite planar graph $G$ has a  $H$-partition with bichromatic layered-width $3$ such that $H$ is planar and has treewidth at most 3.
\end{lemma}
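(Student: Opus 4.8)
The plan is to reduce the bipartite (hence non-triangulated) case to the triangulated setting of \cref{thm:djmmuwBfsLayering} by triangulating $G$ while preserving all distances from a chosen root, so that the guaranteed $H$-partition can simply be restricted back to $G$.

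First I would fix a connected bipartite planar graph $G$ with bipartition $V = A \dot{\cup} B$, a planar embedding, a root $r \in A$, and the corresponding bichromatic BFS-layering $\mathcal{L} = (V_0, V_1, \dots)$; since $G$ is bipartite, every edge joins two consecutive layers, and each layer lies entirely in one part. The key step is to triangulate every face of $G$ using only chords that join two vertices whose layers differ by at most $1$. I achieve this by repeated ear-cutting: in a face bounded by a simple cycle, pick a vertex $w$ of locally minimum layer; its two face-neighbors lie one layer above $w$ (they cannot be in $w$'s layer, as that would be an edge inside a part, nor below, as $w$ is a local minimum), so the chord joining them connects two vertices of equal layer. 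Removing $w$ from the boundary leaves a polygon in which consecutive boundary vertices still differ by at most one layer, so the argument iterates and yields a full triangulation of the face in which every added chord spans at most one layer. Doing this in every face (including the outer one) produces a planar triangulation $G'$ on the same vertex set $V(G)$.

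Because each added chord $(u,v)$ has $\big|\operatorname{level}(u) - \operatorname{level}(v)\big| \le 1$, it creates no shortcut from $r$, so the graph-theoretic distances from $r$ are unchanged and $\mathcal{L}$ is exactly the BFS-layering of $G'$ from $r$. I then apply \cref{thm:djmmuwBfsLayering} to the triangulation $G'$ with this particular layering (the theorem guarantees a suitable partition for \emph{every} BFS-layering), obtaining an $H$-partition $(H, \{A_x : x \in V(H)\})$ of $G'$ of layered-width $3$ with $H$ planar of treewidth at most $3$. Finally I restrict this partition to $G$: since $E(G) \subseteq E(G')$, every edge of $G$ is still intra- or inter-bag, so $(H, \{A_x\})$ is an $H$-partition of $G$ with the same planar, treewidth-$3$ host $H$; and since the bags and the layering $\mathcal{L}$ are unchanged, $|A_x \cap V_i| \le 3$ still holds, with $\mathcal{L}$ bichromatic for $G$. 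This yields the desired bichromatic-layered-width-$3$ partition.

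The main obstacle is the triangulation step and its bookkeeping: I must argue that ear-cutting at local minima indeed terminates in a genuine triangulation and never introduces a chord that reduces a distance, which is the crux of keeping the layering bichromatic. Two technicalities need care — reducing to the case where faces are bounded by simple cycles (handling graphs that are not $2$-connected, e.g.\ by first augmenting $G$ to a $2$-connected bipartite planar graph without decreasing distances) and avoiding parallel edges when a candidate chord already exists. The latter is mild, since every chord we add joins two same-part vertices and lives inside a single face, so it is never an original edge of $G$; any residual coincidence can be removed by a local change of the ear chosen.
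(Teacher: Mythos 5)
Your overall strategy is the same as the paper's: triangulate $G$ so that the added edges respect the bichromatic layering, apply \cref{thm:djmmuwBfsLayering} with that layering, and restrict the resulting $H$-partition back to $G$. The difference lies in how the triangulation is produced, and that is where your proposal has a genuine gap. Your dismissal of the parallel-edge problem rests on the claim that ``every chord we add joins two same-part vertices,'' and this claim is false beyond the first ear cut. After you cut an ear at a layer-minimal vertex $w$ of layer $j$, the same-layer chord $(u,v)$ with both endpoints at layer $j+1$ becomes a boundary edge of the remaining polygon; if $v$ is the next local minimum, its polygon neighbors are $u$ (layer $j+1$) and some $t$ with layer $j+2$ reached by an original edge, so the next chord $(u,t)$ joins two vertices in \emph{different} parts and may perfectly well coincide with an original edge of $G$ drawn outside the face --- a planar configuration that is easy to realize. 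So multi-edges can arise exactly where your argument asserts they cannot, and the fallback ``change the ear locally'' is not innocent: under the layer constraint the standard repairs (fanning from the ear vertex, picking a different diagonal) are unavailable because they may create chords spanning two layers, and it is not clear a face always offers an alternative valid ear. A second, smaller gap is the reduction to simple face boundaries: you assert a $2$-connectivity augmentation that preserves both distances from $r$ and bipartiteness, but adding edges at a cut vertex can create odd cycles or shortcuts, and no construction is given.

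The paper sidesteps this entire difficulty by reversing the order of operations: it first quadrangulates $G$ (at that point no layering constraint is in force), and only \emph{then} runs BFS on the quadrangulation $G'$. Every face is now a $4$-cycle, and a BFS layering of a quadrangulation forces each face to have layer pattern $(i,i+1,i+2,i+1)$ or $(i,i+1,i,i+1)$; in both cases the diagonal joining the two layer-$(i+1)$ vertices is a same-layer, same-part chord, so the layering is preserved automatically and the ear-cutting machinery, together with its termination, simplicity and connectivity bookkeeping, disappears. (The paper, too, leaves some simplicity details implicit --- two faces can demand the same diagonal --- but its chords are always same-part, so the issue is far more benign than in your scheme.) One incidental merit of your variant: by running BFS on $G$ itself and preserving distances you obtain a BFS layering of $G$ in the literal sense required by the definition, whereas the paper works with the BFS layering of $G'$; nothing downstream needs this, however. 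To repair your proof you would either have to treat the cross-part chord collisions rigorously, or simply adopt the quadrangulate-first step --- at which point your argument becomes the paper's.
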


\begin{proof}
   We begin by quadrangulating the input graph $G$ obtaining the quadrangulation $G'$. Then, we perform a BFS traversal of $G'$. In the resulting layering, every quadrangle $q=(a_1,b_1,a_2,b_2)$ is (up to relabeling) either such that $a_1 \in V_i$, $b_1,b_2, \in V_{i+1}$ and $a_2 \in V_{i+2}$ or such that $a_1,a_2 \in V_i$, $b_1,b_2, \in V_{i+1}$. In either case, we can triangulate $q$ with edge $(b_1,b_2)$. Applying this procedure to all quadrangles yields a triangulated supergraph $G''$ that has the property that the layering obtained from $G'$ still is a valid layering for $G''$. Then, we can apply \cref{thm:djmmuwBfsLayering} to obtain the $H$-partition for $G''$ which serves as the required bichromatic $H$-partition for graph $G$.
 \end{proof}

Based on this decomposition, Dujmović et al.~\cite{DJMMUW20} compute a queue-layout using the following lemma:

\begin{claim}[Lemma 8 of \cite{DJMMUW20}]\label{lem:djmmuwQueueLayout}
For all graphs $H$ and $G$, if $H$ has a $k$-queue layout and $G$ has an $H$-partition of
layered-width $\ell$ with respect to some layering $(V_0, V_1, \ldots)$ of $G$, then $G$ has a $(3\ell k + \lfloor \frac{3}{2} \ell\rfloor )$-queue
layout using vertex order $\overrightarrow{V_0},\overrightarrow{V_1},\ldots$, where $\overrightarrow{V_i}$ is some order of $V_i$. In particular,\begin{equation}\label{eq:queueEq}
   \qn(G) \leq 3 \ell\  \qn(H) + \left\lfloor \frac{3}{2}\ell \right\rfloor.
\end{equation}
\end{claim}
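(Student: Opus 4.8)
The final statement to prove is \cref{lem:djmmuwQueueLayout}, a generic lemma: if $H$ has a $k$-queue layout and $G$ has an $H$-partition of layered-width $\ell$ with respect to a layering $(V_0,V_1,\ldots)$, then $G$ admits a $(3\ell k + \lfloor \tfrac{3}{2}\ell\rfloor)$-queue layout with vertex order $\overrightarrow{V_0},\overrightarrow{V_1},\ldots$

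The plan is to fix the vertex order of $G$ by concatenating the layers in the order $V_0, V_1, V_2, \ldots$, and within each layer $V_i$ to choose an internal order $\overrightarrow{V_i}$ dictated by the $H$-partition and a $k$-queue layout of $H$. Concretely, I would first fix a $k$-queue layout of $H$, giving a linear order of the nodes $x \in V(H)$ partitioned into $k$ queues $Q_1,\ldots,Q_k$. Within a single layer $V_i$, each vertex belongs to some bag $A_x$, and since the layered-width is $\ell$, at most $\ell$ vertices of $V_i$ lie in any one bag. I would order the vertices of $V_i$ primarily by the position of their bag's node $x$ in the $H$-layout, breaking ties (vertices sharing a bag) by a fixed internal order of size at most $\ell$. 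This produces $\overrightarrow{V_i}$.

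The key step is to classify every edge of $G$ and assign it to one of the $3\ell k + \lfloor \tfrac32 \ell\rfloor$ queues so that no two edges in a queue nest. Edges split into two types under the $H$-partition: \emph{intra-bag} edges (both endpoints in the same bag $A_x$) and \emph{inter-bag} edges (endpoints in bags joined by an edge $(x,y) \in E(H)$). For inter-bag edges I would track, for each edge $(u,v)$ with $u \in A_x, v \in A_y$, the triple consisting of the layer-difference of its endpoints (which is $0$ or $\pm 1$ for edges between consecutive or equal layers, giving the factor accounting for the $3\ell$-part), the queue of $H$ containing $(x,y)$ (the factor $k$), and the within-bag tie-break indices of $u$ and $v$ (contributing the factor $\ell$). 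For intra-bag edges, both endpoints lie in one bag across at most two consecutive layers; these are handled separately and contribute the $\lfloor \tfrac32 \ell\rfloor$ summand. I would verify that edges sharing a common queue label cannot nest: the nesting of two inter-bag edges would force a nesting either in the $H$-layout (contradicting that their $H$-edges share a queue) or in the layer structure or tie-break refinement, while for intra-bag edges the bounded width $\ell$ across two layers limits the nesting depth. A clean way to organize this is to invoke the standard fact that if a vertex order makes $G$ a subgraph of a graph whose ``same-queue'' relation is transitively consistent, the number of queues needed equals the maximum number of pairwise nesting edges, and then to bound this maximum by the product of the three contributions plus the intra-bag term.

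The main obstacle I expect is the careful bookkeeping of the constants: isolating exactly why the layer/position contribution yields the factor $3$ (edges can go between a layer and itself, the previous, or the next layer, and the three relative positions of bag-nodes combine to $3$ rather than more) and why the intra-bag edges need only $\lfloor \tfrac32\ell\rfloor$ additional queues rather than something larger. Pinning down the tie-break order within each bag so that intra-layer, intra-bag edges and inter-layer, intra-bag edges jointly use only $\lfloor \tfrac32\ell\rfloor$ queues is the delicate combinatorial heart; one natural approach is to observe that within a bag the at most $\ell$ vertices per layer, spread over two consecutive layers, give at most $2\ell$ vertices forming a structure whose edges nest in a controlled way, and a rainbow (set of pairwise nesting edges) among them has size at most $\lfloor \tfrac32\ell\rfloor$. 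Once each of the three multiplicative factors and the additive term is justified by a rainbow-size bound, the total queue count $3\ell k + \lfloor \tfrac32 \ell\rfloor$ follows, and \eqref{eq:queueEq} is the special case where $H$ is laid out with $\qn(H)$ queues.
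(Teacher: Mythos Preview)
Your plan matches the paper's sketch (which follows the original argument in \cite{DJMMUW20}): order layers first, bags second (via the $k$-queue layout of $H$), then break ties within a bag, and classify edges by the intra/inter-bag and intra/inter-layer dichotomies. The paper makes the constants more explicit than you do---the $3\ell k$ splits as $\ell k$ each for intra-layer inter-bag edges, \emph{forward} inter-layer inter-bag edges, and \emph{backward} inter-layer inter-bag edges (each class reduces to a $K_{\ell,\ell}$ per $H$-edge, and $H$ has $k$ queues), while the $\lfloor\tfrac32\ell\rfloor$ splits as $\lfloor\tfrac{\ell}{2}\rfloor$ for intra-layer intra-bag edges (at worst a $K_\ell$) plus $\ell$ for inter-layer intra-bag edges (at worst a $K_{\ell,\ell}$)---which resolves exactly the bookkeeping you flagged as the main obstacle.
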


In order to improve the lemma for bipartite planar graphs, we briefly sketch which components the upper bound of $\qn(G)$ in~\eqref{eq:queueEq} is composed of.

\begin{proof}[Sketch of the proof of \cref{lem:djmmuwQueueLayout}]
   The main argument here, is to classify edges as \emph{intra-bag} or \emph{inter-bag} as well as as \emph{intra-layer} or \emph{inter-layer}. Namely, an edge is called \emph{intra-bag} if its two endpoints occur in the same bag $A_x$ with $x \in V(H)$. Otherwise, it is called \emph{inter-bag}. Similarly, an edge is called \emph{intra-layer} if both its endpoints occur on the same layer $V_i$, otherwise it is called \emph{inter-layer}.
   
   For each classification of edges, different queues are used. More precisely:
   \begin{enumerate}[E.1]
       \item\label{e:intraLayer1} Intra-layer intra-bag edges may induce a $K_\ell$, hence there are at most $\lfloor \frac{\ell}{2} \rfloor$ queues needed for such edges.
       \item\label{e:inter0} Inter-layer intra-bag edges may induce a $K_{\ell,\ell}$, hence there are at most $\ell$ queues needed for such edges.
       \item\label{e:intraLayer2} Intra-layer inter-bag edges corresponding to the same edge of $H$ may induce a $K_{\ell,\ell}$, hence there are at most $\ell k$ queues needed for such edges as $H$ has queue number at most $k$.
       \item\label{e:inter1} Inter-layer inter-bag \emph{forward}\footnote{Let $(u,v)$ be an inter-layer inter-bag edge so that $u \in A_x$ and $v \in A_y$. Then, $(u,v)$ is called \emph{forward} if and only if $x$ precedes $y$ in the $k$-queue layout of $H$. Otherwise, it is called \emph{backward}.} edges corresponding to the same edge of $H$ may induce a $K_{\ell,\ell}$, hence there are at most $\ell k$ queues needed for such edges as $H$ has queue number at most $k$.
       \item\label{e:inter2} Inter-layer inter-bag \emph{backward} edges corresponding to the same edge of $H$ may induce a $K_{\ell,\ell}$, hence there are at most $\ell k$ queues needed for such edges as $H$ has queue number at most $k$.
\end{enumerate}  This yields at most $\lfloor \frac{3}{2}\ell \rfloor$ intra-bag edges and at most $3 \ell k$ inter-bag edges.  
 \end{proof}

Given a $H$-partition with bounded bichromatic layered-width, \cref{lem:djmmuwQueueLayout} can obviously be refined as follows:

\begin{lemma}\label{lem:bichromaticQueueLayout}
For all graphs $H$ and $G$, if $H$ has a $k$-queue layout and $G$ has a $H$-partition of
bichromatic layered-width $\ell$ with respect to some bichromatic layering $(V_0, V_1, \ldots)$ of $G$, then $G$ has a $(2\ell k + \ell)$-queue
layout using vertex order $\overrightarrow{V_0},\overrightarrow{V_1},\ldots$, where $\overrightarrow{V_i}$ is some order of $V_i$. In particular,\begin{equation}\label{eq:queueEqBipartite}
   \qn(G) \leq 2 \ell\  \qn(H) + \ell.
\end{equation}
\end{lemma}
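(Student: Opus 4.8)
The plan is to keep exactly the vertex order $\overrightarrow{V_0},\overrightarrow{V_1},\ldots$ and the edge-classification scheme from the proof of \cref{lem:djmmuwQueueLayout}, and to argue that the bichromatic assumption makes two of the five edge classes vanish. Concretely, I would reuse the partition of $E(G)$ into the intra-layer intra-bag class (E.1), the inter-layer intra-bag class (E.2), the intra-layer inter-bag class (E.3), and the forward/backward inter-layer inter-bag classes (E.4, E.5), together with the queue assignments established there; nothing about those assignments is re-derived.

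The single new ingredient is an observation about the layering. Since $(V_0,V_1,\ldots)$ is bichromatic for the bipartition $V = A \,\dot{\cup}\, B$, we may assume $V_{2i}\subseteq A$ and $V_{2i+1}\subseteq B$ for all $i$, so each layer is monochromatic. As $G$ is bipartite, every edge joins a vertex of $A$ to a vertex of $B$, hence one endpoint lies in an even layer and the other in an odd layer. In particular, no edge of $G$ has both endpoints in the same layer, i.e.\ $G$ has no intra-layer edges at all.

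Consequently the two intra-layer classes E.1 and E.3 are empty, and the layout is carried entirely by the three inter-layer classes. Reusing the bounds from \cref{lem:djmmuwQueueLayout}, E.2 needs at most $\ell$ queues (its edges may induce a $K_{\ell,\ell}$ between two consecutive layers inside one bag), while each of E.4 and E.5 needs at most $\ell k$ queues, since $H$ has a $k$-queue layout and each edge of $H$ contributes a $K_{\ell,\ell}$ per class. Summing gives $\ell + \ell k + \ell k = 2\ell k + \ell$ queues, which is exactly the saving of $\lfloor \ell/2\rfloor + \ell k$ (the weights of E.1 and E.3) over the general bound $3\ell k + \lfloor \tfrac{3}{2}\ell\rfloor$; plugging in $k=\qn(H)$ yields \eqref{eq:queueEqBipartite}.

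The only point that needs care—and the main, though mild, obstacle—is verifying that the surviving queue assignments stay valid after discarding the empty classes. This is immediate: we change neither the vertex order nor the queue into which any surviving edge is placed, and deleting edges from a queue can never create a new nesting. Hence the no-nesting property of each queue is inherited verbatim from the proof of \cref{lem:djmmuwQueueLayout}, and no independent recomputation is required.
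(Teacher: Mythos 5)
Your proposal is correct and follows essentially the same route as the paper: the paper's proof likewise observes that a bichromatic layering forbids intra-layer edges, so classes (E.\ref{e:intraLayer1}) and (E.\ref{e:intraLayer2}) are empty and the bound drops from $3\ell k + \lfloor\frac{3}{2}\ell\rfloor$ to $2\ell k + \ell$ (using $\lfloor\frac{3}{2}\ell\rfloor - \lfloor\frac{\ell}{2}\rfloor = \ell$). Your extra remark that deleting edges from a queue cannot create nestings is a fine, if tacit-in-the-paper, sanity check.
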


\begin{proof}
   Since the layering is bichromatic, we observe that $G$ contains no intra-layer edges. These are accounted with at most $\lfloor \frac{\ell}{2} \rfloor$ intra-bag queues~(E.\ref{e:intraLayer1}) and at most $k \ell$ inter-bag queues~(E.\ref{e:intraLayer2}) in the proof of \cref{lem:djmmuwQueueLayout}. Hence, the statement follows.
 \end{proof}

\cref{thm:bichromaticBfsLayering} and \cref{lem:bichromaticQueueLayout} already imply that every planar bipartite graph $G$ has a $33$-queue layout (as planar 3-trees have queue number at most $5$).  This bound can be reduced to $28$ following the modifications of Bekos et al.~\cite{BGR22}. Namely, Bekos et al. apply the following modifications:

\begin{enumerate}[B.1]
   \item\label{bgr:1} Additional constraints are maintained for a proper layered drawing algorithm for outerplanar graphs.
   \item\label{bgr:2} Based on (B.\ref{bgr:1}), additional constraints for the $5$-queue layout of planar $3$-trees are shown.
   \item\label{bgr:3} \emph{Degenerate tripods}\footnote{ A \emph{tripod} is the content of a bag of $H$. It consists of a triangle from which three \emph{vertical paths} (traversing the BFS-layering in order) emerge (and potentially some edges between such paths). In a \emph{degenerate} tripod, one of the paths consists of a single vertex.} in the $H$-decomposition of $G$ are excluded by inserting a three new vertices inside each face $f$ of the triangulated graph $G$ and triangulating appropriately. In particular, the three new vertices inside $f$ become leafs in the augmented BFS-tree, that is, the BFS-layering of the original graph $G$ stays intact. 
   \item\label{bgr:4} Based on (B.\ref{bgr:1}) to (B.\ref{bgr:3}), the order of vertices within each  layer is chosen more carefully when applying \cref{lem:djmmuwQueueLayout}. 
\end{enumerate}

As a result, Bekos et al.\ obtain the following:

\begin{claim}[Lemmas 5 and 6 of \cite{BGR22}]\label{lem:bgr}
In the queue layout computed by the modifications~(B.\ref{bgr:1}) to~(B.\ref{bgr:4}) of the algorithm of Dujmović et al.~\cite{DJMMUW20}, the intra-bag inter-layer edges (see E.\ref{e:inter0}) can be assigned to at most $2$ queues while each set of inter-bag edges (see (E.\ref{e:intraLayer2}) to (E.\ref{e:inter2})) can be assigned to at most $13$ queues. 
\end{claim}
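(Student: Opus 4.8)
The statement to prove is \cref{lem:bgr}: that in the queue layout produced by applying modifications (B.\ref{bgr:1})--(B.\ref{bgr:4}) to the Dujmovi\'c et al.\ algorithm, the intra-bag inter-layer edges (type E.\ref{e:inter0}) fit into at most $2$ queues, and each of the three families of inter-bag edges (types E.\ref{e:intraLayer2}, E.\ref{e:inter1}, E.\ref{e:inter2}) fits into at most $13$ queues. This refines the naive counts of $\ell$ and $\ell k$ queues (which with $\ell = 3$ and $k = 5$ would give $3$ and $15$). The plan is to expose exactly where the structural constraints from (B.\ref{bgr:1})--(B.\ref{bgr:4}) remove the worst-case nestings that force the larger counts, and then to bound the resulting queue requirements directly via the definition of a queue (no two edges nest).

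\textbf{The intra-bag inter-layer edges.} First I would recall that a bag is a \emph{tripod}: a triangle with three vertical paths emerging from it, each path monotonically traversing the BFS-layering. Modification (B.\ref{bgr:3}) excludes degenerate tripods, so every bag has three genuine vertical paths, and the three vertices of a tripod on a common layer are pairwise non-adjacent in the vertical structure except through matching-type connections. I would classify the intra-bag inter-layer edges by which pair of the three vertical paths (or path-internal consecutive-layer steps) they connect. Because the vertex order within each layer is fixed carefully by (B.\ref{bgr:4}) and respects the three paths' cyclic arrangement around the tripod triangle, edges joining the same ordered pair of paths across consecutive layers cannot nest; the potential nesting only arises between the (at most three) path-to-path families. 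I would argue that a careful $2$-coloring of these families — exploiting that one pair of paths can be laid out to share a queue with the path-internal edges — collapses the requirement to $2$ queues. The key observation is that with $\ell = 3$ the bag meets each layer in at most $3$ vertices, one per vertical path, so the inter-layer intra-bag edges form a union of paths (not a full $K_{3,3}$), and paths of this restricted form are $2$-queue layoutable under the chosen order.

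\textbf{The inter-bag edges.} For the three inter-bag families I would proceed family by family, but the unifying mechanism is the same: modification (B.\ref{bgr:2}) establishes additional constraints on the $5$-queue layout of the planar $3$-tree $H$ (derived in turn from the outerplanar drawing constraints of (B.\ref{bgr:1})), and these constraints control, for each edge $(x,y)$ of $H$ used in the queue assignment, the relative order of the bags $A_x$ and $A_y$ within their shared layers. I would show that for inter-bag edges realizing a single edge of $H$, the careful within-layer order of (B.\ref{bgr:4}) prevents nesting among edges carried by the \emph{same} queue of $H$. Summing over the $5$ queues of $H$ and accounting for the at-most-$3$ width would naively give $15$; the reduction to $13$ comes from the fact that the constrained layout guarantees that two of the $\ell$-indexed sub-families can be merged for certain $H$-queue classes (again because the layers-meet-bag bound of $3$ produces matchings/paths rather than complete bipartite graphs on the worst-case $H$-edges). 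I would make this merging explicit by tracking, for each of the $5$ queues of $H$, how many within-layer slots are actually occupied after the reordering, and verifying the total never exceeds $13$.

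\textbf{Main obstacle.} The hard part will be making precise how the constraints (B.\ref{bgr:1})--(B.\ref{bgr:4}) interact: specifically, proving that the careful within-layer vertex order from (B.\ref{bgr:4}) is simultaneously compatible with all three inter-bag families and with the intra-bag family, so that the savings are not double-counted and no hidden nesting is reintroduced when the families share the ambient vertex order. I expect this to require a case analysis over the relative positions of tripods meeting a common layer, driven by the properties that (B.\ref{bgr:2}) extracts from the outerplanar layout. The degenerate-tripod exclusion (B.\ref{bgr:3}) is what guarantees these cases are uniform (every tripod contributes exactly one vertex per occupied layer per path), so I would lean on it heavily to keep the casework finite and to certify the final counts of $2$ and $13$.
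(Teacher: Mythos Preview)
The paper does not prove this statement at all: \cref{lem:bgr} is stated explicitly as ``Lemmas~5 and~6 of~\cite{BGR22}'' and is imported as a black box from Bekos, Gronemann, and Raftopoulou. There is no proof in the present paper to compare your proposal against; the authors simply cite the result and then invoke it in the proof of \cref{thm:qn-bipartite-ub}.

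As for the proposal itself, it is a plan rather than a proof, and the plan is too vague at the decisive points. For the intra-bag bound you correctly identify that a tripod meets each layer in at most three vertices (one per vertical path), but the claim that the inter-layer intra-bag edges ``form a union of paths'' and are therefore $2$-queue layoutable under the chosen order is not justified: a tripod also contains the triangle edges and possibly edges between distinct vertical paths, and you need to exhibit the actual $2$-coloring and verify non-nesting under the specific order fixed by (B.\ref{bgr:4}), not merely assert that one exists. For the inter-bag bound, the reduction from $15$ to $13$ in \cite{BGR22} does not come from ``merging two of the $\ell$-indexed sub-families for certain $H$-queue classes'' in the way you describe; it relies on a detailed analysis of how the $5$-queue layout of $H$ interacts with the three vertices per layer per bag, and your account of where exactly the two queues are saved is speculative. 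Your ``Main obstacle'' paragraph is honest about this: the compatibility case analysis you anticipate is precisely the content of the cited lemmas, and nothing in your sketch carries it out. If you want an actual proof here you must reproduce the arguments of Lemmas~5 and~6 of~\cite{BGR22}; otherwise, as the paper does, simply cite them.
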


We are now ready to prove \cref{thm:qn-bipartite-ub}:

\begin{proof}[Proof of \cref{thm:qn-bipartite-ub}] By \cref{thm:bichromaticBfsLayering}, for every bipartite planar graph $G$ there is a $H$-partition with bichromatic layered width $3$ such that $H$ is planar and has treewidth at most $3$. We then apply \cref{lem:bichromaticQueueLayout,lem:bgr}. In particular, we observe that \cref{lem:bgr} is applicable since none of the modifications described in Steps (B.\ref{bgr:1}) to (B.\ref{bgr:4}) interfere with our bichromatic layering. Namely, (B.\ref{bgr:1}) and (B.\ref{bgr:2}) directly operate on $H$ while (B.\ref{bgr:3}) and (B.\ref{bgr:4}) respect the layering. Hence, by \cref{lem:bgr}, for bipartite planar graphs we need $2$ queues for intra-bag inter-layer edges (E.\ref{e:inter0}) and in total $26$ queues for inter-bag inter-layer edges (E.\ref{e:inter1} and E.\ref{e:inter2}) resulting in $28$ queues overall.  
\end{proof}

\section{Structure of Stacked Quadrangulations}
\label{sec:product-structure}

In this section we investigate the structure of stacked quadrangulations and then deduce an upper bound on the queue number.
In particular we show that every stacked quadrangulation is a subgraph of the strong product $ H \boxtimes C_4 $, where $H$ is a planar $3$-tree. 
Recall that stacked quadrangulations are $(4,4)$-matching-stacked graphs. 
For ease of presentation, we first prove a product structure theorem for general $ (t, s) $-stacked graphs as it prepares the proof for the matching variant. 

\begin{restatable}{theorem}{productStacked}
    \label{thm:product_fs-stacked}
    For $ t \geq 3, s \geq 1 $, every $ (t, s) $-stacked graph $G$ is a subgraph of $ H \boxtimes K_s $ for some planar graph $ H $ of treewidth at most $ t $.
\end{restatable}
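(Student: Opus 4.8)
The plan is to exhibit an $H$-partition of $G$ of width at most $s$ in which $H$ is planar with treewidth at most $t$; by the equivalence between $H$-partitions of width $w$ and subgraphs of $H \boxtimes K_w$ recalled above, this proves the theorem. I would take the bags to be the natural ones coming from the recursive construction: the initial connected planar graph of order at most $s$ forms one bag, and the vertex set $V(G')$ of each graph $G'$ inserted in a stacking step forms one further bag. Each bag has at most $s$ vertices and induces a connected subgraph of $G$ (it contains the connected graph from which it was built, and stacking never deletes edges), so contracting every bag to a single node yields a minor $H$ of $G$; since $G$ is planar, $H$ is planar, and the resulting partition is an $H$-partition of width at most $s$ by construction. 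It then remains only to bound the treewidth of $H$ by $t$.

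The combinatorial heart of the argument is the following structural claim, which I would prove by induction on the number of stacking steps: in the planar embedding used by the construction, every face $f$ is incident to at most $t$ distinct bags. Writing $S_f$ for the set of bags meeting $f$, the faces split into two kinds. A face created by a stacking step has at most $t$ vertices by definition, hence meets at most $t$ bags. Every other face is either a face of the initial graph or a face internal to some inserted $G'$; \emph{crucially}, all of its vertices then lie in a single bag, so $|S_f| = 1$. The inductive step only has to check that inserting $G'$ into a face $f^\ast$ destroys $f^\ast$ and replaces it by faces internal to $G'$ (a single bag) and faces squeezed between $G'$ and the boundary of $f^\ast$ (at most $t$ vertices, hence at most $t$ bags), while leaving all other faces untouched.

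With this claim in hand I would build a width-$t$ tree decomposition of $H$ alongside $G$, maintaining the invariant that for every face $f$ of the current graph the set $S_f$ is contained in some bag of the tree decomposition. The base case is the initial graph with a single-node tree decomposition whose bag is the single node $b_0$. When $G'$ is stacked into $f^\ast$, the new node $b$ satisfies $N_H(b) \subseteq S_{f^\ast}$, because $G'$ is joined only to vertices of $f^\ast$; by the invariant $S_{f^\ast}$ lies in some existing tree-decomposition bag $\beta$, so I attach a new leaf bag $\beta'' = S_{f^\ast} \cup \{b\}$ to $\beta$. Since $|S_{f^\ast}| \le t$, this bag has at most $t+1$ nodes, so the width stays at most $t$, and the standard checks confirm that the result is still a valid tree decomposition. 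Every new face $f'$ satisfies $S_{f'} \subseteq S_{f^\ast} \cup \{b\} = \beta''$, which restores the invariant.

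The step I expect to require the most care is the structural claim, and in particular the observation that the only faces which may exceed $t$ vertices — those belonging to the initial graph or to an inserted $G'$, which the definition explicitly does not require to be small — are exactly the faces incident to a single bag. This is what keeps $|S_{f^\ast}| \le t$ whenever we stack, and hence keeps the new tree-decomposition bag within width $t$; without it, the contracted neighbourhood of $b$ could be too large to absorb into a single bag. Everything else — planarity via the minor relation, the width bound $s$, and the bookkeeping for the tree decomposition — is routine.
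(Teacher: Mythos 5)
Your proposal is correct and takes essentially the same route as the paper's proof: the identical $H$-partition (one bag per inserted subgraph, so width at most $s$ and $H$ a planar minor of $G$) and the identical tree decomposition built by attaching, at each stacking step, a leaf bag $F \cup \{v'\}$ under the invariant that the nodes meeting any current face lie in a common bag. Your explicit structural claim that any face exceeding $t$ vertices (a face of the initial graph or one interior to an inserted $G'$) meets only a single bag makes precise a point the paper glosses over when asserting $|B_y| \le t+1$, but it is the same argument.
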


\begin{proof}
    In order to prove the theorem, we need to find an $H$-partition of $G$, and a tree decomposition $T$ of $H$ with treewidth at most $t$, where $H$ is a planar graph. 
    Given a $ (t, s) $-stacked graph $ G $ for $ t \geq 3 $ and $ s \geq 1 $ and its construction sequence, we define the $ H $-partition $\big(H, \{V_x : x\in V(H)\}\big)$  of width at most $ s $ as follows.
    In the base case we have a graph of size at most $ s $ whose vertices all are assigned to a single bag $ V_{x_0}$.
    Then in each construction step we add a new bag, say $V_{v'}$ that contains all new vertices (those of the inserted graph $G'$). Thus each bag contains at most $ s $ vertices, that is, the width of the $H$-partition  is at most $ s $. For $u \in V_x$ and $v \in V_y$, if $G$ contains edge $(u,v)$ then we connect $x$ and $y$ in $H$. 

    Next, we define a tree decomposition  $\big(T, \{B_x : x\in V(T)\}\big)$ of $ H $.
    We again give the definition iteratively following the construction sequence of $ G $.
    We thereby maintain as an invariant that for each face $ f $ of the current subgraph of $ G $, there is a bag $B_{y_f}$ in $ T $ containing all vertices $x$ of $ H $ for which $V_x$ contains a vertex of $ f $.
    In the base case, the tree decomposition contains a single bag that only contains node $x_0$.
    Now  consider a recursive construction step, that is  a connected graph $ G' $ of order at most $ s $ is inserted into some face $ f $ yielding a new $(t,s)$-stacked graph $G''$.
    Recall that there is a node $v'$ in $H$ with $V_{v'}=V(G')$.
    Let $ F \subseteq V(H) $ denote the nodes $x$ of $ H $ such that $V_x$ contains a vertex of $ f$. By induction hypothesis, there exists a node $y_f$ in $ T$ such that  $B_{y_f}$   contains $ F $.
    We add a new node $y$ as a leaf of $y_f$ associated with bag $ B_{y} = F \cup \{ v' \} $. We observe that this procedure indeed yields a tree decomposition of $G''$. Moreover, the new bag $B_y$ contains all nodes of $H$ that contain vertices of newly generated faces.
    Also note that each bag of $ T $ contains at most $ t + 1 $ vertices, where $ t $ is the maximum size of a face in the construction process of $ G $.
    Together with the observation that $ H $ is a minor of $ G $ and thus planar, this proves \cref{thm:product_fs-stacked}.
 \end{proof}

We now extend our ideas to the following product structure theorem.
Note that we use the same $ H $-partition and show that the treewidth is at most $ t - 1 $.

\begin{restatable}{theorem}{productMatchingStacked}
    \label{thm:product-matching-stacked}
     For $ t \geq 3, s \geq 2 $, every $ (t, s) $-matching-stacked graph $G$ is a subgraph of $ H \boxtimes K_s $ for some planar graph $ H $ of treewidth at most $ t - 1 $.
\end{restatable}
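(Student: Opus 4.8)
The plan is to follow the same skeleton as the proof of \cref{thm:product_fs-stacked}, reusing \emph{verbatim} the $H$-partition (one bag $V_{v'}$ per inserted graph $G'$, plus the initial bag $V_{x_0}$) and the accompanying tree decomposition $(T,\{B_x\})$, and to show that in the matching variant the invariant can be strengthened so that each bag of $T$ has size at most $t$ rather than $t+1$. Since the width of the $H$-partition is still at most $s$ and $H$ is still a minor of $G$ (hence planar), the only thing to re-examine is the treewidth bound, which is the bag size of $T$ minus one; improving the bag size from $t+1$ to $t$ gives treewidth at most $t-1$, as required.

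The key observation driving the improvement is what the matching condition does to the faces. In the general case, inserting $G'$ into a face $f$ with vertex set $F\subseteq V(H)$ produces a new bag $B_y=F\cup\{v'\}$ of size $|F|+1$; since a face can touch up to $t$ distinct nodes of $H$, this can be as large as $t+1$. In the matching-stacked case, however, the edges between $G'$ and $f$ form a matching. First I would argue that, by the size-at-most-$s$ bound on $G'$ together with the matching requirement, each newly created face contains at most one vertex of $f$ on its boundary: a new face is bounded by a path through $G'$ together with a single matching edge back into $f$ (and possibly a walk along the boundary of $f$), but the matching structure prevents two distinct vertices of $f$ from both being joined to the same inserted component along a common new face. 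Consequently each new face touches at most one original node of $F$ plus the node $v'$, so that the invariant ``for each face there is a bag of $T$ containing all nodes $x$ with $V_x\cap f\neq\emptyset$'' can be maintained with bags of size at most $t$.

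Concretely, I would redefine the bag attached at each step to contain only the relevant subset of $F$. When $G'$ is inserted into $f$, instead of adding one leaf with bag $F\cup\{v'\}$, I would add the node $v'$ so that for each newly created face the set of touched nodes (at most the single original node meeting that face, together with $v'$, and the nodes internal to $G'$) is contained in some bag; by construction $G'$ itself contributes a single node $v'$, so these bags never exceed the maximum face size $t$. I would then verify the three tree-decomposition axioms exactly as before: connectivity of $v'$'s occurrences is immediate since $v'$ is added as a leaf, and every edge of $H$ is covered because each edge of $H$ arises either inside a single inserted graph (hence inside one bag) or from a matching edge across a face (hence inside the bag witnessing that face).

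The main obstacle I expect is the geometric/combinatorial claim that the matching condition forces each new face to meet at most one vertex of $f$ — that is, controlling exactly how the inserted connected graph $G'$ subdivides $f$ into faces and which boundary vertices of $f$ appear on each such subface. This requires a careful case analysis of the planar embedding of $G'$ inside $f$ together with the matching edges, and it is here that the hypothesis $s\geq 2$ and the matching requirement (as opposed to arbitrary connections) are genuinely used. Once that face-local bound is established, the treewidth accounting and the verification of the tree-decomposition axioms are routine and follow the template of \cref{thm:product_fs-stacked} with the improved bag size.
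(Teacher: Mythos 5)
Your reduction to the tree-decomposition bag size is set up correctly, but the pivotal face-local claim on which your proposal rests is false, and in fact the truth is exactly the opposite. You claim that the matching condition forces each newly created face to contain \emph{at most one} vertex of the old face $f$ on its boundary. Consider the basic stacked-quadrangulation step: a $4$-cycle $S$ is inserted into a quadrilateral face $f$ and joined to it by a planar perfect matching; each of the four new non-interior faces then has exactly \emph{two} vertices of $f$ (and two of $S$) on its boundary. In general, the boundary of any new face that is not interior to $G'$ alternates between segments on the boundary of $f$ and segments in $G'$, joined by matching edges; since the boundary is a closed walk, it uses at least two matching-edge transitions, and because the connecting edges form a matching these transitions have distinct endpoints on both sides. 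Hence every non-interior new face meets at least two vertices of $f$, never at most one (the degenerate case of a single matching edge traversed twice is even worse: that face sees \emph{all} of $f$'s vertices). So the case analysis you defer to ``the main obstacle'' cannot succeed. A secondary problem: your plan to replace the single bag $F\cup\{v'\}$ by several small per-face bags each containing $v'$ would violate axiom (iii) of tree decompositions (connectedness of the set of bags containing $v'$, and likewise for the nodes of $F$) if these bags are attached as separate leaves of $y_f$, whose bag does not contain $v'$; your sketch does not address this.

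The paper's proof keeps the single bag $B_y=F\cup\{v'\}$ from \cref{thm:product_fs-stacked} verbatim and instead bounds $|F|$ by looking \emph{backward} at the step in which the face $f$ itself was created, rather than forward at the faces the current insertion creates. When $f$ came into existence, some connected graph $G''$ (with $|V(G'')|\geq 2$, which is where $s\geq 2$ enters) was inserted into a face $f'$ via a matching; if $f$ is interior to $G''$ then $F$ is a single node, and otherwise the matching/parity argument above shows that at least two of $f$'s at most $t$ vertices lie in $V(G'')$, i.e.\ in a single bag of the $H$-partition. The remaining vertices of $f$ contribute at most $t-2$ further nodes, so $|F|\leq t-1$ and $|B_y|\leq t$, giving treewidth at most $t-1$ with no change to the tree-decomposition bookkeeping at all. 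In short: the matching hypothesis is exploited not to make new faces touch few old bags, but to guarantee that every face has two of its boundary vertices concentrated in one bag.
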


\begin{proof}
   Given a $ (t, s) $-matching-stacked graph $ G $ for $ t \geq 3, s \geq 2 $, we compute its $ H $-partition and the tree decomposition $T$ of $H$ as in the proof of Theorem~\ref{thm:product_fs-stacked}.
   By construction, the width of the $ H $-partition is at most $ s $ and $ H $ is a minor of $ G $ and thus planar.
   So it suffices to show that each bag of $ T $ contains at most $ t $ nodes of $H$ (instead of $t+1$). This is clearly true for the root-node of $T$ whose bag has only one node of $H$.  Consider a step in the construction of $ G $, that is, we have a face $ f $ bounded by at most $ t $ vertices, and we place a connected subgraph $ G' $ of order at most $ s $ inside. Assume that each of the bags placed before introducing $G'$ contains at most $t$ nodes of $H$. 
   Again, $v'$ denotes the vertex of $ H $ with $V_{v'}=V(G')$, 
   $ F $ is the set of nodes whose bags contain at least one vertex of $f$, 
   and node $ y_f $ is the node whose bag $ B_{y_f} $ contains $ F $.
   Further $ y $ is the node of $ H $ with $B_{y}=F\cup \{v'\}$ that is introduced as a leaf of $ y_f $ during this step.
   
   We claim that $F$ has at most $t-1$ nodes of $H$ and therefore $B_y$ contains at most $t$ nodes. Consider the step where face $f$ was created. During that step a subgraph $G''$ was added inside some face $f'$. So, if $f$ is an interior face of $G''$, it follows that $F$ contains only one node of $H$, namely $v''$ for which $V_{v''}=V(G'')$. On the other hand, as $G''$ is connected to the boundary of $f'$ with a matching, at least two vertices of $f$ belong to $V(G'')$. Hence the remaining vertices of $f$ belong to at most $t-2$ bags associated with nodes of $H$ and thus the set $F$ has at most $t-1$ nodes as claimed. 
   We conclude that each bag of the tree decomposition contains at most $ t $ nodes of $H$.
 \end{proof}

Theorem~\ref{thm:product_fs-stacked} shows that every stacked quadrangulation is a subgraph of $ H \boxtimes K_4 $, where $H$ is a planar graph with treewidth at most $4$. However, stacked quadrangulations are  also $ (4, 4) $-matching-stacked graphs and, hence, by Theorem~\ref{thm:product-matching-stacked} they are subgraphs of  $ H \boxtimes K_4 $, where $H$ is a planar $3$-tree. 
In the following we improve this result by replacing $ K_4 $ by $ C_4 $.

\productStackedQuadrangulations*

\begin{proof}
    Given a stacked quadrangulation $ G $ and its construction sequence, we compute its $ H $-partition as described above. Note that in \cref{thm:product-matching-stacked}, we show that $H$ is a planar $3$-tree.
    It remains to show that $H$ does not only certify that $ G \subseteq H \boxtimes K_4 $ (\cref{thm:product-matching-stacked}) but also the stronger statement that $ G \subseteq H \boxtimes C_4 $.
    That is, we need to show that in order to find $ G $ in the product, we only need edges that show up in the product with $ C_4 $.
    In each step of the construction of $ G $ we insert a 4-cycle into some face $f$. Hence, for each node $x$ of $ H $ the bag $V_x$ consists of a 4-cycle denoted by $(v_0^x,v_1^x,v_2^x,v_3^x)$.
    We label the four vertices with $ 0, 1, 2, 3 $ such that for $i=0,1,2,3$ vertex $v_{i+k}^x$  is labeled $i$ for some offset $k$ (all indices and labels taken modulo 4). 
    
    As the labels appear consecutively along the $4$-cycle, the strong product allows for edges between two vertices of distinct bags if and only if their labels differ by at most 1 (mod 4).
    That is we aim to label the vertices of each inserted 4-cycle so that each inter-bag edge connects two vertices whose labels differ by at most 1.
    We even prove a slightly stronger statement, namely that offset $k$ can be chosen for a newly inserted bag $V_x$ such that the labels of any two vertices $ v $ and $ v' $ in $G$ connected by an inter-bag edge differ by exactly 1.
    Indeed, assuming this for now, we see that if $(v,v')$ is an inter-bag edge with $v \in V_x$ and $v'\in V_y$, then $(x,y) \in E(H)$. 
    For this recall that $V_x$ and $V_y$ induce two $4$-cycles. Assume that $v=v_i^x$ with label $i$ and $v'=v_j^y$ is labeled $j$, where $0\leq i,j\leq 3$. As the two labels differ by exactly 1, we have $j=i+1 \pmod 4$ or $j=i-1 \pmod 4$. Now in the strong product $H\boxtimes C_4$, vertex $v_i^x$ is connected to all of $v_{i-1}^y,v_i^y,v_{i+1}^y$, that is, the edge $(v,v')$ exists in $H\boxtimes C_4$.
    It is left to prove that the labels can indeed by chosen as claimed.
    As shown in \cref{fig:product-C_4}, starting with a 4-cycle with labels $ 0,1,2,3 $ from the initial $4$-cycle, we obtain three types of faces that differ in the labeling of their vertices along their boundary:
    Type \textbf{(a)} has labels $ i, i+1, i+2, i+3 $, ($0\leq i\leq 3$),
    Type \textbf{(b)} has labels 
    $ i, i+1, i+2, i+1 $, ($0\leq i\leq 3$), and
    Type \textbf{(c)} has labels $ i, i+1, i, i+1 $ ($0\leq i\leq 3$).
    In each of the three cases we are able to label the new 4-cycle so that the labels of the endpoints of each inter-bag edge differ by exactly 1, see \cref{fig:product-C_4} and also \cref{fig:product_stacked_quadrangulation_example} for an example. 
    This concludes the proof. \end{proof}
    \begin{figure}[t]
            \centering
            \includegraphics{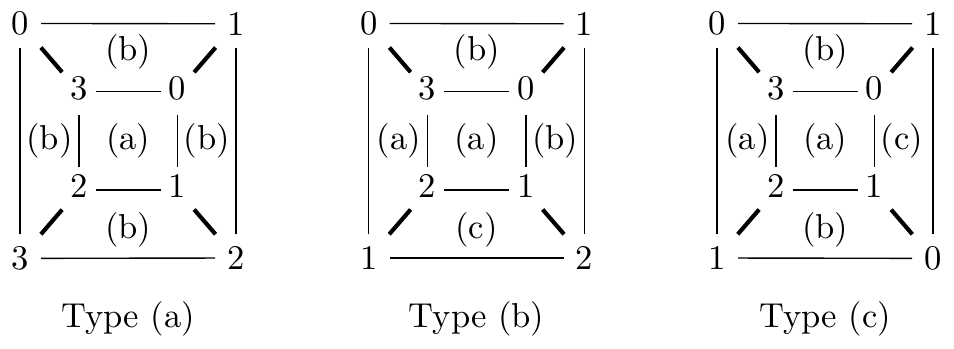}
            \caption{%
                The three types of faces defined in the proof of \cref{thm:product-stacked-quadrangulations}, each with a 4-cycle stacked inside, where the labels represent the position of the vertex in the 4-cycle.
                For better readability, vertices are represented by their labels, where a vertex with label $ i + j $ (same $ i \in \{0, 1, 2, 3\} $ for all eight vertices) is written as $ j $ (labels taken mod~4).
                That is, each type represents four situations that can occur in a face, e.g., the other three cases of Type (b) have labels 1, 2, 3, 2 ($ i = 1 $), resp. 2, 3, 0, 3 ($ i = 2 $), resp. 3, 0, 1, 0 ($ i = 3 $) for the outer vertices and also plus $i$ for the labels of the stacked 4-cycle.
                The inter-bag edges are drawn thick, and indeed the labels of their endpoints differ exactly by 1 (mod 4).
            }
            \label{fig:product-C_4}
    \end{figure}
    
    \begin{figure}
        \centering
        \includegraphics{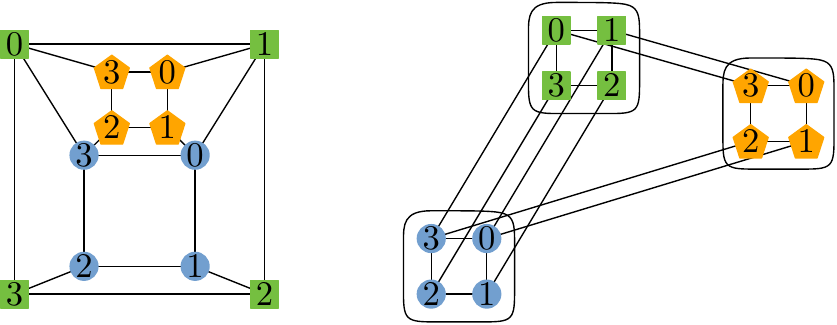}
        \caption{A stacked quadrangulation $ G $ with its $ H $-partition showing $ G \subseteq H \boxtimes C_4 $, where $ H $ is a triangle in this case. The labels used in the proof of \cref{thm:product-stacked-quadrangulations} are written inside the vertices.}
        \label{fig:product_stacked_quadrangulation_example}
    \end{figure}

\Cref{thm:product-stacked-quadrangulations} gives an upper bound of 21 on the queue number of stacked quadrangulations, compared to the upper bound of 5 on the queue number of planar 3-trees (stacked triangulations)~\cite{ABGKP18}.

\qnStackedQuadrangulationsUB*

\begin{proof}
    In general, we have that $\qn(H_1 \boxtimes H_2) \leq |V(H_2)| \cdot \qn(H_1) + \qn(H_2)$ by taking a queue layout of $ H_2 $ and replacing each vertex with a queue layout of $ H_1 $~\cite[Lemma 9]{DJMMUW20}.
    In particular, we conclude that $\qn(H \boxtimes C_4) \leq 4 \cdot \qn(H) + 1$.
    As the queue number of planar $3$-trees is at most 5~\cite{ABGKP18}, \cref{thm:product-stacked-quadrangulations} gives an upper bound of $ 4 \cdot 5 + 1 = 21 $.
 \end{proof}

\section{Lower Bounds}
\label{sec:lower-bounds}

\begin{figure}
    \centering
    \includegraphics{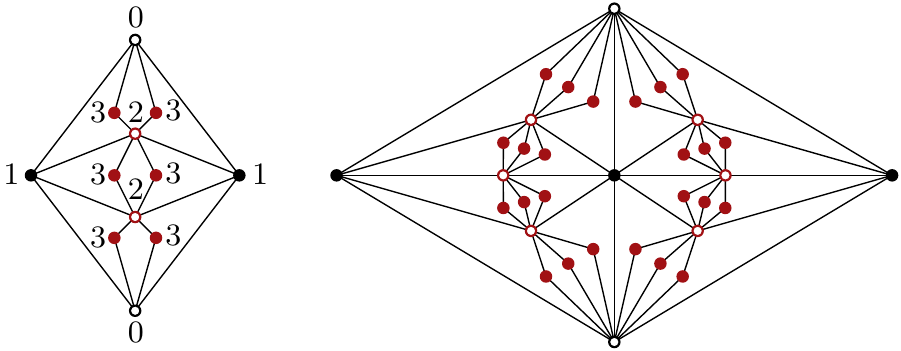}
    \caption{The graphs $ G_3(2) $ and $ G_3(3)$, where the numbers and style of the vertices indicate their depth.}
    \label{fig:G_3(2)}
\end{figure}

This section is devoted to lower bounds on the queue number and mixed page number of bipartite planar graphs.
We use the same family of 2-degenerate quadrangulations $ G_d(w) $ for both lower bounds.
The graph $ G_d(w) $ is defined as follows, where we call $ d $ the \emph{depth} and $ w $ the \emph{width} of $ G_d(w) $; 
see \cref{fig:G_3(2)}.
Let $ G_0(w) $ consist of two vertices, which we call \emph{depth-0 vertices}.
For $ i \geq 0 $, the graph $ G_{i + 1}(w) $ is obtained from $ G_i(w) $ by adding $ w $ vertices into each inner face (except $i=0$, where we use the unique face) and connecting each of them to the two depth-$ i $ vertices of this face (if two exist).
If the face has only one depth-$ i $ vertex $ v $ on the boundary, then we connect the new vertices to $ v $ and the vertex opposite of $ v $ with respect to the face, that is\ the vertex that is not adjacent to $ v $.
The new vertices are then called \emph{depth-$ (i + 1) $ vertices}.
Observe that the resulting graph is indeed a quadrangulation and each inner face is incident to at least one and at most two depth-$ (i + 1) $ vertices.
The two neighbors that a depth-$ (i + 1) $ vertex $ u $ has when it is added are called its \emph{parents}, and $ u $ is called a \emph{child} of its parents.
If two vertices $u$ and $v$ have the same two parents, they are called \emph{siblings}.
We call two vertices of the same depth a \emph{pair} if they have a common child.

\subsection{Queue Layouts}
\label{sec:qn-lb}

We prove combinatorially that $ G_d(w) $ does not admit a 2-queue layout for $d\geq 3,  w \geq 24$. 
We also  verified with a \textsc{sat}-solver~\cite{bob} that the smallest graph in the family of queue number 3 is $ G_4(4) $ containing 259 vertices.


\qnBipartiteLB*

\begin{proof}
   We show that the queue number of $ G_d(w) $ is at least 3 for $ d  \geq 4$  and $ w \geq 24 $. Assume to the contrary that $G_d(w)$ admits a 2-queue layout with vertex order $\prec$. Our goal is to determine some forbidden configurations that will lead to a contradiction. A  \emph{\goodconfig} in a 2-queue layout of $G_d(w)$ consists of six vertices ordered $ x_1 \prec x_2 \prec v \prec v' \prec x_3 \prec x_4 $, where $ v $ and $ v' $ form a pair and $(x_1, x_3, x_4, x_2)$ is a $4$-cycle; see \cref{fig:good-config}.
   A \emph{nested configuration} consists of seven vertices $u\prec u'\prec v_1\prec v_2\prec v_3\prec v_4\prec v_5$ , where $u$ and $u'$ form a pair, and contains edges $(u,v_2)$, $(u,v_4)$, $(u',v_1)$ and $(u',v_4)$; see  \cref{fig:nested-config}.
   The \emph{depth} of a configuration is defined as the depth of the pair $ \langle v, v' \rangle $, respectively $ \langle u, u' \rangle $.
   
   \begin{figure}
       \centering%
       \begin{subfigure}[b]{0.4\textwidth}
       \includegraphics[page=1]{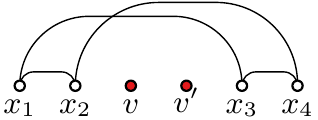}
       \caption{\nolinenumbers}
       \label{fig:good-config}
       \end{subfigure}
       \hfil
        \begin{subfigure}[b]{0.48\textwidth}
       \includegraphics[page=4]{linear_layout_figures}
       \caption{\nolinenumbers}
       \label{fig:nested-config}
       \end{subfigure}
       \caption{ (a)~A \goodconfig with pair $\langle v, v' \rangle$. (b) A nested configuration with pair $\langle u, u' \rangle$.}
       \label{fig:configs}
   \end{figure}
   
   \begin{figure}
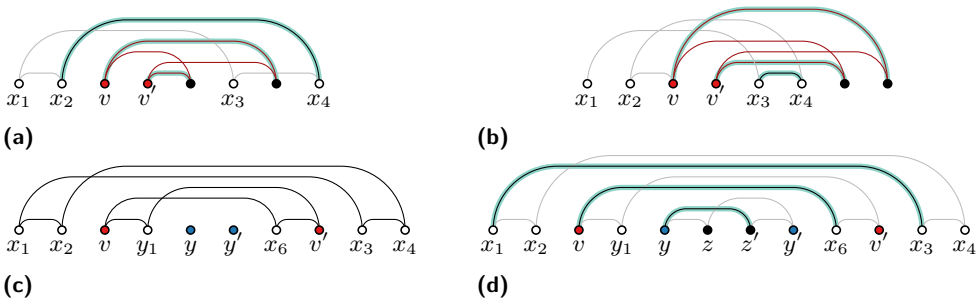

       \centering%
       \begin{subfigure}[b]{0.4\textwidth}
       \includegraphics[page=2]{linear_layout_figures}
       \caption{\nolinenumbers}
       \label{fig:good-right-1}
       \end{subfigure}
       \hfil
       \begin{subfigure}[b]{0.48\textwidth}
		\centering
       \includegraphics[page=3]{linear_layout_figures}
       \caption{\nolinenumbers}
       \label{fig:good-right-2}
       \end{subfigure}
       
        \begin{subfigure}[b]{0.4\textwidth}
       \includegraphics[page=24]{linear_layout_figures}
       \caption{\nolinenumbers}
       \label{fig:good-middle}
       \end{subfigure}
       \hfil
       \begin{subfigure}[b]{0.48\textwidth}
       \includegraphics[page=25]{linear_layout_figures}
       \caption{\nolinenumbers}
       \label{fig:good-middle_2}
       \end{subfigure}
       \caption{%
           Proof of Claim~\ref{claim:no_good_config}.
           (a)--(b)~A \goodconfig with three children to the right of $ v' $.
           Either two of them are to the left of $ x_4 $ as in (a) or to the right of $x_4$ as in (b). Both cases create a 3-rainbow.
           (c)~If at least 10 children are between $v$ and $v'$, there is another \goodconfig formed by $v, v'$ and their children. (d)~Three nested {\goodconfig}s contain a $3$-rainbow.
           }
       \label{fig:good-right}
   \end{figure}

   \begin{claim}\label{claim:no_good_config}
       For $w\geq 14$, a $2$-queue layout of $G_d(w)$ does not contain a \goodconfig at depth $d'\leq d-2$.
   \end{claim}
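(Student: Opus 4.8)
The plan is to assume, for contradiction, a \goodconfig $x_1\prec x_2\prec v\prec v'\prec x_3\prec x_4$ at depth $d'\le d-2$ and to expose a \emph{$3$-rainbow} (three pairwise nesting edges), which a $2$-queue layout cannot accommodate, as any two of three pairwise-nesting edges must share a queue. Two features of $G_d(w)$ drive the argument: the pair $\langle v,v'\rangle$ has $w$ common children, and because $d'\le d-2$ these children in turn split into consecutive sibling pairs that each have a common child (a grandchild of $\langle v,v'\rangle$). Throughout I nest edges inside the long $4$-cycle edges $(x_1,x_3)$ and $(x_2,x_4)$, both of which enclose the interval $[v,v']$.

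First I would bound how many children of $\langle v,v'\rangle$ can lie outside $[v,v']$ (\cref{fig:good-right-1,fig:good-right-2}). If three children lie to the right of $v'$, then two of them, say $c_1\prec c_2$, lie on the same side of $x_4$: if both precede $x_4$ then $(x_2,x_4)\supset(v,c_2)\supset(v',c_1)$, and if both follow $x_4$ then $(v,c_2)\supset(v',c_1)\supset(x_3,x_4)$; either way a $3$-rainbow appears. The mirror argument rules out three children to the left of $v$. Hence at most two children fall outside $[v,v']$ on each side, so at least $w-4\ge 10$ children lie strictly between $v$ and $v'$.

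Next I would promote these interior children to a \emph{nested} \goodconfig of depth $d'+1$ (\cref{fig:good-middle}). A short counting argument over the $w-1$ consecutive sibling pairs---discarding the at most eight pairs meeting an exterior child and the at most four pairs meeting the two extreme interior children---shows that for $w\ge 14$ some sibling pair $\langle g_k,g_{k+1}\rangle$ lies entirely between $v$ and $v'$ with a further sibling $g_a$ before it and $g_b$ after it in the order. Then $g_a,g_b,v,v'$ form a $4$-cycle, giving a \goodconfig with pair $\langle g_k,g_{k+1}\rangle$, ordered $v\prec g_a\prec g_k\prec g_{k+1}\prec g_b\prec v'$, whose enclosing edge $(v,g_b)$ already nests inside $(x_2,x_4)$. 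This is exactly where $d'\le d-2$ is needed: it guarantees that $\langle g_k,g_{k+1}\rangle$ is genuinely a pair, so the new configuration is legitimate and the first step applies to it verbatim.

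Finally I would run the child-position analysis on the new configuration: it either yields an immediate $3$-rainbow, or places at least one grandchild $h$ with $g_k\prec h\prec g_{k+1}$, and then $(x_2,x_4)\supset(v,g_b)\supset(g_k,h)$ is a $3$-rainbow (\cref{fig:good-middle_2})---the edge $(g_k,h)$ existing precisely because grandchildren exist under $d'\le d-2$. I expect the middle step to be the main obstacle: one must verify that a usable interior sibling pair survives after deleting the exterior and the extreme interior children (which is what pins the threshold to $w\ge 14$) and keep the orientation of each pair consistent when relabelling so that $g_k\prec g_{k+1}$ in the vertex order.
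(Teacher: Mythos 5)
Your proposal is correct and follows essentially the same route as the paper's proof: the identical case analysis producing a $3$-rainbow when three children escape $[v,v']$ (split on their position relative to $x_4$), the same promotion to a nested \goodconfig at depth $d'+1$ built from the two extreme interior children and an interior sibling pair, and the same final rainbow descending to a grandchild. The only differences are cosmetic bookkeeping—you pigeonhole over the $w-1$ overlapping consecutive sibling pairs with a discard count of $12$, while the paper partitions $14$ children into seven disjoint pairs, and your closing rainbow uses $(x_2,x_4)$ where the paper uses $(x_1,x_3)$—both of which are sound for $w\geq 14$.
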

   \begin{claimproof}
       Consider a bad configuration and 14 children of $ v $ and $ v'$, partitioned into seven pairs. 
       By pigeonhole principle, we have at least ten children between $ v $ and $ v' $ or we have at least three children either to the left of $v$ or to the right  of $v'$. 
       
       Assume first that we have at least three children of $\langle v,v'\rangle$ to the right of $v'$ (note that the case where at least three children are to the left of $v$ is symmetric). Clearly, we have  two children between $ v' $ and $ x_4 $ or two of them to the right of $ x_4 $ (see \cref{fig:good-right}). 
       In both cases, the edges connecting them to $v$ and $v'$ form a 2-rainbow that is nested by $(x_2,x_4)$ or nests  $(x_3,x_4)$, as shown in Figures~\ref{fig:good-right-1} and \ref{fig:good-right-2} respectively.
       So we have that a 3-rainbow cannot be avoided if there are three children of $ v $ and $ v' $ to the right of $ v' $ or, by symmetry, to the left of $ v $.
       
       Second, assume that at least ten children of $\langle v,v'\rangle$ are placed between $ v $ and $ v' $, while at most four of them are not.
       We aim to find another \goodconfig consisting of $ v, v' $, and four of their children.
       Since we have a total of 14 children forming seven pairs, and at most four children are not between $v$ and $v'$, it follows that at least three pairs of children are placed between $v$ and $v'$. Let $y_1$ be the first child to the right of $v$ and $y_6$ the first child to the left of $v'$. Then, at least one pair of children, say $\langle y,y' \rangle$, where $\{y_1,y_6\}\cap \{y,y'\}=\emptyset$ is placed between $y_1$ and $y_6$. Note that vertices $v,y_1,y,y',y_6,v'$ form a new \goodconfig at depth $d'+1$, whose edges are nested by the edges of the starting \goodconfig; the situation is depicted in \cref{fig:good-middle}.
       
       We repeat the process and consider the children of $\langle y,y'\rangle$. If at least three of them are either to the left of $y$ or the right of $y'$, then there exists a $3$-rainbow. Otherwise, there exists a pair of children $\langle z, z'\rangle$ of $y$ and $y'$ (at depth $d'+2$) that are between $y$ and $y'$. It is not hard to see that  edges $(x_1,x_3)$, $(v,x_6)$ and $(y,z')$ create a $3$-rainbow; see \cref{fig:good-middle_2}.
    \end{claimproof}

   \begin{figure}
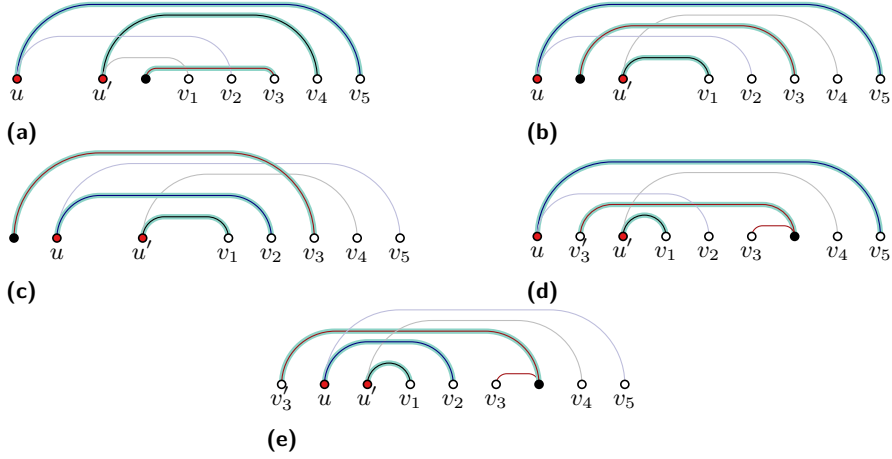

       \centering%
        \begin{subfigure}[b]{0.45\textwidth}
       \includegraphics[page=5]{linear_layout_figures}
       \caption{\nolinenumbers}
       \label{fig:v_3-left-children_1}
       \end{subfigure}
       \hfil
       \begin{subfigure}[b]{0.45\textwidth}
       \includegraphics[page=6]{linear_layout_figures}
       \caption{\nolinenumbers}
       \label{fig:v_3-left-children_2}
       \end{subfigure}
       \hfil
       \begin{subfigure}[b]{0.45\textwidth}
       \includegraphics[page=7]{linear_layout_figures}
       \caption{\nolinenumbers}
       \label{fig:v_3-left-children_3}
       \end{subfigure}
       \hfil
       \begin{subfigure}[b]{0.45\textwidth}
       \includegraphics[page=26]{linear_layout_figures}
       \caption{\nolinenumbers}
       \label{fig:v_3-left-children_pair_1}
       \end{subfigure}
       \hfil
       \begin{subfigure}[b]{0.45\textwidth}
       \includegraphics[page=27]{linear_layout_figures}
       \caption{\nolinenumbers}
       \label{fig:v_3-left-children_pair_2}
       \end{subfigure}
       \caption{Proof of \cref{claim:nested_config}. (a)--(c)~Any edge leaving $ v_3 $ to the left creates a 3-rainbow. (d)--(e)~The pair of $v_3$ does not precede $u'$.}
       \label{fig:v_3-left-children}
   \end{figure}

   \begin{claim}\label{claim:nested_config}
       If a 2-queue layout of $G_d(w)$ contains a nested configuration at depth $d'\leq d-1$, then  $v_3$ precedes all its children. Additionally, if $v_3$ forms a pair with $v'_3$, then $v'_3$ is to the right of $u'$.
   \end{claim}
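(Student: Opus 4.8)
The whole argument rests on the fact that a $2$-queue layout contains no $3$-rainbow, that is, no three pairwise nesting edges. I would first read off the queue assignment that the configuration forces. Both edges $(u,v_2)$ and $(u,v_5)$ leaving $u$ nest $(u',v_1)$, because $u \prec u' \prec v_1$ while $v_1 \prec v_2 \prec v_5$; moreover the long edge $(u,v_5)$ also nests $(u',v_4)$, since $u \prec u' \prec v_4 \prec v_5$. In a layout with only two queues this forces $(u',v_1)$ and $(u',v_4)$ into one queue, say $Q_2$, and $(u,v_2)$ and $(u,v_5)$ into the other, $Q_1$; no configuration edge is left unassigned. The crucial feature is that $(u,v_5)$ nests $(u',v_4)$ and both straddle $v_3$, so these two edges form a nested pair with distinct endpoints on either side of $v_3$.

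For the first statement I would argue by contradiction, assuming $v_3$ has a child $c$ with $c \prec v_3$ and considering the edge $(c,v_3)$, with a case split on the position of $c$. If $c \prec u$, then $(c,v_3)$, $(u,v_2)$, $(u',v_1)$ pairwise nest, a $3$-rainbow. If $u \prec c \prec u'$, then $(u,v_5)$, $(c,v_3)$, $(u',v_1)$ pairwise nest. If $u' \prec c \prec v_3$, then $(u,v_5)$, $(u',v_4)$, $(c,v_3)$ pairwise nest. Each case contradicts the $2$-queue layout, so $v_3$ precedes all of its children.

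For the second statement I would first invoke the first one: if $v_3$ forms a pair with $v_3'$, then their common child $c^\ast$ is in particular a child of $v_3$, so $v_3 \prec c^\ast$. Assuming for contradiction that $v_3' \prec u'$, the edge $(v_3',c^\ast)$ reaches from the left of $u'$ to the right of $v_3$ and hence nests $(u',v_1) \in Q_2$, forcing it into $Q_1$. A short case analysis --- on whether $v_3' \prec u$ or $u \prec v_3' \prec u'$, and on whether $c^\ast$ stays left of $v_5$ --- then exhibits a second edge of $Q_1$ that forms a nesting pair with $(v_3',c^\ast)$ (for instance $(u,v_2)$ when $v_3' \prec u$, or $(u,v_5)$ when $c^\ast \prec v_5$), so that $(v_3',c^\ast)$ cannot be placed in either queue. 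Hence $v_3'$ must lie to the right of $u'$.

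The step I expect to be the main obstacle is the case in which the new vertex falls strictly inside the configuration: the child $c$ with $u' \prec c \prec v_3$ in the first statement, and, in the second, the common child $c^\ast$ landing to the right of $v_5$. Here a single new edge need not complete a $3$-rainbow with two ``outer'' edges that share an endpoint, so everything hinges on having first established that $(u',v_4)$ sits in the queue opposite the long edge $(u,v_5)$, turning $(u,v_5)$ and $(u',v_4)$ into a nested pair straddling $v_3$. Pinning down this forcing, and disposing of the residual placement of $c^\ast$ beyond $v_5$ (likely by descending to a deeper configuration), is the delicate part.
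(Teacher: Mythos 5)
Your part~(1) is correct and is essentially the paper's own argument: the paper performs the same split on whether a left-going child $c$ of $v_3$ lies left of $u$, between $u$ and $u'$, or between $u'$ and $v_3$, and its figures exhibit exactly the three rainbows you list. Your reading of the configuration with the long edge $(u,v_5)$ is also the right one: since $v_1,\dots,v_5$ are children of the pair $\langle u,u'\rangle$, all edges $(u,v_i)$ and $(u',v_i)$ are present in $G_d(w)$, and the third case genuinely needs $(u,v_5) \supset (u',v_4) \supset (c,v_3)$ --- the four edges literally listed in the paper's definition of a nested configuration would not suffice there, since with only those one can 2-color all edges consistently. Your queue-forcing preamble is harmless but unnecessary for part~(1), as all three cases are direct $3$-rainbows.

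The gap you flag at the end is genuine, and as written your proof is incomplete there: in the case $u \prec v'_3 \prec u'$ you obtain a contradiction only when the common child satisfies $c^\ast \prec v_5$, and the residual sub-case $v_5 \prec c^\ast$ cannot be dismissed by any local rainbow or forcing argument. Indeed, if every common child $c_1 \prec \dots \prec c_w$ of $\langle v_3, v'_3 \rangle$ lies to the right of $v_5$, then placing all edges incident to $u$ together with all edges $(v'_3, c_j)$ in one queue, and all edges incident to $u'$ together with all edges $(v_3, c_j)$ in the other, is a valid assignment of every edge incident to $u, u', v_3, v'_3$: within each queue any two edges either share an endpoint, cross, or are disjoint (e.g., $(u,v_i)$ and $(v'_3,c_j)$ cross because $u \prec v'_3 \prec v_i \prec c_j$, and $(u',v_i)$ with $i \in \{4,5\}$ crosses $(v_3,c_j)$). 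So an additional idea --- descending to the children of the $c_j$, or invoking structure outside the configuration --- is truly required, not merely ``delicate.'' For what it is worth, the paper's own proof of the second part is the same two-case analysis ($v'_3 \prec u$, resp.\ $u \prec v'_3 \prec u'$) and asserts a $3$-rainbow by reference to a figure in which the child is implicitly drawn before $v_5$; it thus glosses over exactly the sub-case you isolated. You have matched the paper's argument where it is rigorous and correctly identified where it is not, but neither your text nor the paper's closes that sub-case.
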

   \begin{claimproof}
   Consider a nested configuration at depth $d'<d$. Then $v_3$ has $w$ children. If one of them is to the left of $v_3$, then it is either between $u'$ and $v_3$, or between $u$ and $u'$ or to the left of $u$. In all three cases there is a $3$-rainbow; see \cref{fig:v_3-left-children_1,fig:v_3-left-children_2,fig:v_3-left-children_3}. For the second part of the lemma, assume that $v'_3$ precedes $u'$. Then it is either between $u$ and $u'$ or before $u$. Again a $3$-rainbow is created; see Figures~\ref{fig:v_3-left-children_pair_1} and \ref{fig:v_3-left-children_pair_2}.
    \end{claimproof}
   
   Consider the initial pair $ \langle u, u'\rangle $ of $ G_0(w) $ with 24 children, grouped into twelve pairs. Without loss of generality assume that $u\prec u'$ in a 2-layout of $G_d(w)$. If there are at least three pairs between $u$ and $u'$, then they form a \goodconfig with $u$ and $u'$ at depth~$0$, contradicting \cref{claim:no_good_config}. So, there are at most two pairs between $ u $ and $ u' $, and at least ten pairs have (at least) one vertex either to the left of $u$ or the right of $u'$.   Hence we can assume without loss of generality that at least five vertices of five different pairs are to the right of $u'$. Let $v_1$, $v_2$, $v_3$, $v_4$ and $v_5$ be these five vertices in the order they appear after $u'$. We assume without loss of generality that $v_1$, $v_2$, $v_3$, $v_4$ and $v_5$ are the rightmost possible choices, in particular, this means that for the pair $\langle v_i,v_i'\rangle$ it holds that $v_i' \prec v_i$, for $1\leq i\leq 5$. As they are children of $\langle u,u'\rangle$, they form a nested configuration. By \cref{claim:nested_config} and the fact that $v'_3\prec v_3$, we conclude that $v'_3$ is between $u'$ and $v_3$, while the children of $\langle v_3,v'_3\rangle$ are to the right of $v_3$. 
   

   \begin{figure}
       \centering%
       \includegraphics{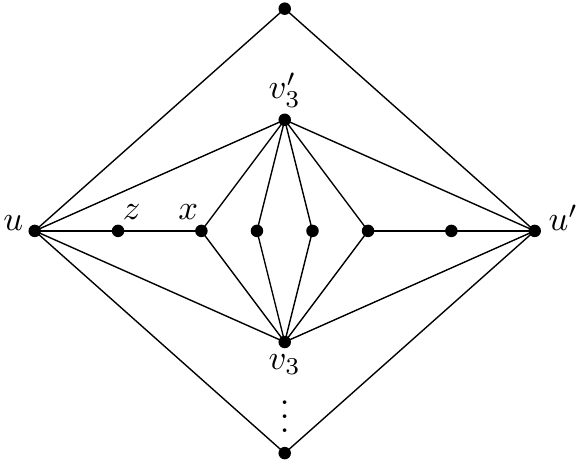}
       \caption{Parts of the graph whose queue number is at least 3 constructed for \cref{thm:qn-bipartite-lb}}
       \label{fig:lb-planar}
   \end{figure}
   
   \begin{figure}
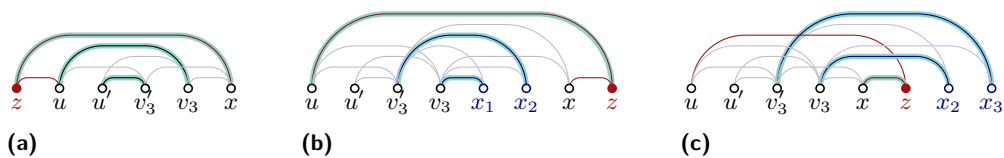

       \centering%
       \begin{subfigure}[b]{0.25\textwidth}
       \includegraphics[page=8]{linear_layout_figures}
       \caption{\nolinenumbers}
       \label{fig:inserting-z-1}
       \end{subfigure}
       \hfil
       \begin{subfigure}[b]{0.33\textwidth}
       \includegraphics[page=9]{linear_layout_figures}
       \caption{\nolinenumbers}
       \label{fig:inserting-z-2}
       \end{subfigure}
       \hfil
       \begin{subfigure}[b]{0.33\textwidth}
       \includegraphics[page=10]{linear_layout_figures}
       \caption{\nolinenumbers}
       \label{fig:inserting-z-3}
       \end{subfigure}
       \caption{Three cases that can occur when inserting $ z $ into the layout without creating a \goodconfig. In all three cases we have a 3-rainbow.}
       \label{fig:inserting-z}
   \end{figure}

   Now consider the children of $\langle  v_3 ,v'_3 \rangle $ and let $ x $ denote the child that shares a face with $ u $. 
   Denote by $x_i$, $1\leq i\leq 3$,
   any three children different from $x$  such that $x_1 \prec x_2 \prec x_3$; see \cref{fig:lb-planar}. By \cref{claim:nested_config}, all children of $\langle  v_3 ,v'_3 \rangle $ are to the right of $v_3$. In particular, either $x_1\prec x_2\prec x$ or $x\prec x_2\prec x_3$. As $u$ and $x$ belong to the boundary of a face, and $x$ is at depth $2<d$, $u$ and $x$ have $w$ common children. By \cref{claim:no_good_config}, there exists a pair of children $\langle z, z'\rangle $ of $\langle u, x \rangle$ such that $z$ is not located between $u$ and $x$. Hence there are two cases to consider, namely $z$ is to the left of $u$, or $z$ is to the right of $x$. In the first case, edges $(u',v_3')$, $(u,v_3)$ and $(x,z)$ form a $3$-rainbow; see \cref{fig:inserting-z-1}. So, $z$ is to the right of $x$. If $x_1\prec x_2\prec x$, then we have the situation depicted in \cref{fig:inserting-z-2}, otherwise $x\prec x_2\prec x_3$  as in \cref{fig:inserting-z-3}. In both cases a $3$-rainbow is created.
   We conclude that $G_d(w)$ with $d=4$ and $w=24$ does not admit a 2-queue layout.
 \end{proof}

\subsection{Mixed Linear Layouts}
\label{sec:mixed-lb}

Next, we prove that for $d \geq 3$ and $w \geq 154$, graph $ G_d(w) $ does not admit a 1-queue 1-stack layout. We remark that the smallest graph of this family with this property is actually $G_3(5)$ which has $128$ vertices. Again, we verified this with a \textsc{sat}-solver~\cite{bob}.
Note that the following theorem answers a question raised in \cite{Pup18,CKN19,ABKM20}.


\mixedLB*

\begin{proof}
   Assume for the sake of a contradiction that $G_3(154)$ admits a 1-queue 1-stack layout with vertex order $ \prec $ and let $ u, u' $ denote the two initial vertices with $ u \prec u' $.
   For ease of presentation, we call an edge \emph{blue} (\emph{orange}) if it is in the queue (stack) and color the edges accordingly in all figures.
   We distinguish three types of children:
   A child $ x $ with parent pair $ \langle v, v' \rangle $ is called a \emph{blue (orange) child} if both edges $ (v,x) $ and $ (v',x) $ are blue (orange, resp.), and it is called \emph{bicolored} if one of the edges is blue and the other is orange.
   A pair is called \emph{blue} (\emph{orange}) if both vertices are blue (orange) and \emph{bicolored} if it contains a bicolored vertex.
   
   \begin{figure}
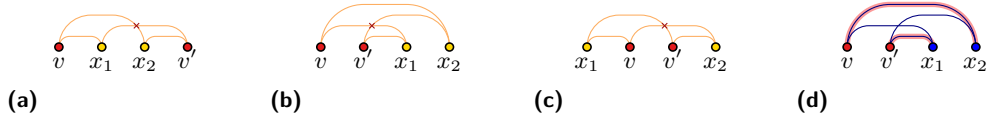

       \centering%
       \begin{subfigure}[b]{0.22\textwidth}
       \centering
       \includegraphics[page=11]{linear_layout_figures}
       \caption{\nolinenumbers}
       \label{fig:yellow-child-1}
       \end{subfigure}
       \hfil
       \begin{subfigure}[b]{0.22\textwidth}
       \centering
       \includegraphics[page=12]{linear_layout_figures}
       \caption{\nolinenumbers}
       \label{fig:yellow-child-2}
       \end{subfigure}
       \hfil
       \begin{subfigure}[b]{0.22\textwidth}
       \centering
       \includegraphics[page=13]{linear_layout_figures}
       \caption{\nolinenumbers}
       \label{fig:yellow-child-3}
       \end{subfigure}
       \hfil
       \begin{subfigure}[b]{0.22\textwidth}
       \centering
       \includegraphics[page=14]{linear_layout_figures}
       \caption{\nolinenumbers}
       \label{fig:blue-child}
       \end{subfigure}
       \caption{Children of a pair $\langle v,v' \rangle$ with $v\prec v'$. (a)~There is at most one orange child between $v$ and $v'$. (b)--(c)~There is at most one orange child to the left of $v$ or to the right of $v'$. (d)~There is at most one blue child to the right of $v'$.}
       \label{fig:yellow-child}
   \end{figure}
   
   Consider a pair $ \langle v, v' \rangle $ with $v \prec v'$. We first make two preliminary observations.
   \begin{claim}\label{claim:orange}
   A pair $ \langle v,v' \rangle $ with $ v \prec v' $ has at most two orange children, one between $ v $ and $ v'$ and one to the left of $v$ or to the right of $v'$.
   \end{claim}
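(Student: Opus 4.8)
The plan is to exploit the defining property of orange children: for an orange child $x$ of a pair $\langle v,v'\rangle$, both edges $(v,x)$ and $(v',x)$ lie in the stack, so no two of these edges may cross. Since every orange child lies in exactly one of the three regions determined by $v\prec v'$ --- to the left of $v$, strictly between $v$ and $v'$, or to the right of $v'$ --- I would bound the number of orange children region by region, in each case exhibiting two vertex-disjoint stack edges that cross whenever a region (or a pair of regions) contains too many children.

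First I would treat the region strictly between $v$ and $v'$. Suppose two orange children satisfy $v\prec x_1\prec x_2\prec v'$; then the disjoint stack edges $(v,x_2)$ and $(x_1,v')$ are ordered as $v\prec x_1\prec x_2\prec v'$ and hence cross, a contradiction (\cref{fig:yellow-child-1}). So there is at most one orange child between $v$ and $v'$.

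Next I would treat the outer region. If $x_1\prec x_2\prec v\prec v'$ are two orange children to the left of $v$, then the disjoint stack edges $(x_1,v)$ and $(x_2,v')$ realize the order $x_1\prec x_2\prec v\prec v'$ and thus cross (\cref{fig:yellow-child-2}); the case of two children to the right of $v'$ is symmetric (\cref{fig:yellow-child-3}). Finally, to show that the two outer regions \emph{together} hold at most one orange child, I would assume one orange child $x_L\prec v$ and one orange child $x_R\succ v'$; then the disjoint stack edges $(x_L,v')$ and $(v,x_R)$ are ordered $x_L\prec v\prec v'\prec x_R$ and therefore cross. Combining these observations, at most one orange child lies to the left of $v$ or to the right of $v'$, which together with the bound on the middle region gives the claim.

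The argument is essentially a finite case check, so I do not expect a conceptual obstacle; the only care needed is the bookkeeping. In each case one must verify that the two chosen edges are vertex-disjoint (so that the notion of crossing applies) and that their endpoints realize the crossing pattern $a\prec c\prec b\prec d$ rather than the nesting pattern $a\prec c\prec d\prec b$ --- which is precisely what dictates the specific pairing of endpoints selected above (for instance pairing $v$ with the \emph{farther} child and $v'$ with the \emph{nearer} one in the middle case).
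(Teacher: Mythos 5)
Your proof is correct and matches the paper's own argument almost verbatim: the same three-way case analysis (two children between $v$ and $v'$, two in the same outer region, one in each outer region) with the same choice of crossing stack edges in each case, differing only in which outer case is handled explicitly versus by symmetry. Your additional remark about verifying vertex-disjointness and the crossing (rather than nesting) pattern is sound bookkeeping that the paper leaves implicit.
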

   \begin{proof}
       Suppose first that $ \langle v,v' \rangle $  has two orange children $x_1,x_2$ such that $v  \prec x_1 \prec x_2 \prec v'$. Then, edges $(v,x_2)$ and $(v',x_1)$ cross (\cref{fig:yellow-child-1}), a contradiction. 
       %
       Second, consider the case $v  \prec v' \prec x_1 \prec x_2$ (the case $x_1 \prec x_2 \prec v \prec v'$ is symmetric). Here $(v,x_1)$ and $(v',x_2)$ cross (\cref{fig:yellow-child-2}). Finally, if $x_1 \prec v \prec v' \prec x_2$, then $(x_1,v')$ and $(x_2,v)$ cross (\cref{fig:yellow-child-3}).
    \end{proof}
   
   \begin{claim}\label{claim:blue}
   A pair $ \langle v,v' \rangle $  has at most two blue children that are not located between $v$ and $v'$, namely one to the left of $v$ and one to the right of $v'$.
   \end{claim}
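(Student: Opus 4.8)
The plan is to mirror the argument for orange children in \cref{claim:orange}, but now exploiting that the \emph{blue} edges lie in the queue and hence no two of them may \emph{nest} (rather than the no-crossing property of the stack used there). Both edges incident to a blue child are blue, so any two blue children lying on the same side of the pair supply four blue edges among which I aim to exhibit a nesting, contradicting that they all belong to a single queue.

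Concretely, I would argue by contradiction on each side separately. Suppose first that $\langle v, v'\rangle$ has two blue children $x_1, x_2$ to the right of $v'$, say with $v \prec v' \prec x_1 \prec x_2$. Consider the two blue edges $(v, x_2)$ and $(v', x_1)$. Since $v \prec v' \prec x_1 \prec x_2$, the edge $(v, x_2)$ nests $(v', x_1)$, contradicting that both lie in the queue; this is exactly the configuration in \cref{fig:blue-child}. Hence at most one blue child lies to the right of $v'$. The case of two blue children $x_1 \prec x_2 \prec v \prec v'$ to the left of $v$ is symmetric: the blue edges $(x_1, v')$ and $(x_2, v)$ satisfy $x_1 \prec x_2 \prec v \prec v'$, so $(x_1, v')$ nests $(x_2, v)$, again a contradiction. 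Thus at most one blue child lies to the left of $v$.

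Combining the two bounds yields at most two blue children outside the interval bounded by $v$ and $v'$, one on each side, as claimed. The only point requiring care is selecting, among the four available blue edges, the pair whose endpoints interleave as a nesting under $\prec$; once the relative order of $v, v', x_1, x_2$ is fixed this choice is forced, so I do not anticipate any genuine obstacle beyond this short bookkeeping. Note in particular that the claim deliberately says nothing about blue children lying \emph{between} $v$ and $v'$, where no nesting need arise, so the argument is confined to the two outer regions.
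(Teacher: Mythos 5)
Your proof is correct and matches the paper's argument essentially verbatim: the paper also derives a contradiction from two blue children $x_1, x_2$ to the right of $v'$, where the blue edges $(v,x_2)$ and $(v',x_1)$ nest, with the left-of-$v$ case symmetric. No differences worth noting.
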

   \begin{claimproof}
   Assume for a contradiction that $ \langle v,v' \rangle $ has two blue children $x_1,x_2$ to the right of $v'$. Then, edges $(v,x_2)$ and $(v',x_1)$ nest (\cref{fig:blue-child}); a contradiction.
    \end{claimproof}
   
   We group the children of every pair $\langle v, v' \rangle$ at depth $d'<d$ into 77 pairs each. Then, we ignore any pair containing a blue vertex that is not placed between its parents or containing a orange vertex (no matter where it is placed). That is, by Claims~\ref{claim:orange} and~\ref{claim:blue} we discard at most four pairs of children for each pair $\langle v, v' \rangle$, that is at least 146 children out of 154 that are grouped into 73 pairs remain. Let $G'$ denote the resulting subgraph of $G_d(w)$. By definition of subgraph $G'$, the following property holds:
   
   \begin{claim}\label{claim:edgesInGPrime}
   Let $\langle v, v' \rangle$ denote a pair occurring in $G'$. Then \begin{inlineenum}
   \item\label{claim:edgesInGPrime:1} $\langle v, v' \rangle$   is either bicolored or blue in $G'$ and
   \item\label{claim:edgesInGPrime:2} all blue children of $\langle v, v' \rangle$ in $G'$ are placed between $v$ and $v'$ in $\prec$. 
   \end{inlineenum}
   \end{claim}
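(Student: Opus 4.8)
The plan is to obtain both assertions directly from the deletion rule defining $G'$. Recall that for each pair $\langle v,v'\rangle$ with $v\prec v'$ at depth $d'<d$ we grouped its $154$ children into $77$ sibling-pairs and then deleted every sibling-pair that either contains an orange vertex or contains a blue vertex placed outside the interval between $v$ and $v'$. Claims~\ref{claim:orange} and~\ref{claim:blue} enter only to guarantee that this deletion removes at most four sibling-pairs per parent, so that $73$ sibling-pairs (and hence $G'$ itself) survive at every level; the content of the present claim is then just a restatement of which children were kept. Thus a pair occurs in $G'$ exactly when it is a surviving sibling-pair whose two endpoints still have a common child in $G'$.

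For part~(i) I would note that every non-initial pair $\langle v,v'\rangle$ of $G'$ is a surviving sibling-pair of its own parents. Since we deleted every sibling-pair containing an orange vertex, neither $v$ nor $v'$ is orange, so as a child each of them is blue or bicolored. By the definition of the color of a pair this leaves exactly two possibilities: $\langle v,v'\rangle$ is blue when both endpoints are blue, and bicolored when at least one of them is bicolored. The initial pair $\langle u,u'\rangle$ has no parents and therefore carries no color, so it falls outside~(i).

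For part~(ii) I would take an arbitrary blue child $x$ of $\langle v,v'\rangle$ that survives in $G'$ and observe that $x$ lies in a sibling-pair of children of $\langle v,v'\rangle$ that was not deleted. Were $x$ placed to the left of $v$ or to the right of $v'$, the rule forbidding blue vertices outside the interval between the parents would have deleted its sibling-pair; hence $x$ lies between $v$ and $v'$. The same reasoning applies verbatim to the blue children of the initial pair $\langle u,u'\rangle$, so~(ii) holds for every pair of $G'$.

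I do not expect a genuine obstacle: both parts are immediate consequences of the deletion rule, and the only points that need a careful word are the degenerate status of the initial pair $\langle u,u'\rangle$, whose endpoints have no parents and hence no child-color, and the well-definedness of the level-by-level grouping into sibling-pairs, which is exactly what Claims~\ref{claim:orange} and~\ref{claim:blue} secure.
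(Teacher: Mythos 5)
Your proposal is correct and takes essentially the same route as the paper: the paper states Claim~\ref{claim:edgesInGPrime} with no separate proof, asserting it holds ``by definition of subgraph $G'$'', and your argument is precisely the unwinding of that deletion rule (orange vertices never survive, so surviving pairs are blue or bicolored; a surviving blue child outside the parental interval would have forced the deletion of its sibling-pair). Your two side remarks---that Claims~\ref{claim:orange} and~\ref{claim:blue} are needed only for the counting that $73$ sibling-pairs survive, and that the initial pair $\langle u,u'\rangle$ is colorless and hence a degenerate case of part~(i)---are accurate refinements of, not deviations from, the paper's reasoning.
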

   %
   %
     \begin{figure}
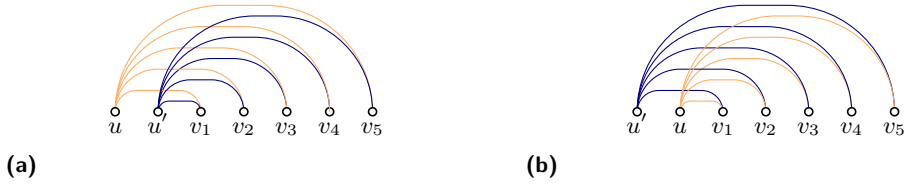

       \centering%
       \begin{subfigure}[b]{0.45\textwidth}
       \centering
       \includegraphics[page=30]{linear_layout_figures}
       \caption{\nolinenumbers}
       \label{fig:mixed-configuration-1}
       \end{subfigure}
       \hfil
       \begin{subfigure}[b]{0.45\textwidth}
       \centering
       \includegraphics[page=31]{linear_layout_figures}
       \caption{\nolinenumbers}
       \label{fig:mixed-configuration-2}
       \end{subfigure}
       \caption{The two different possible layouts of vertices $u,u',v_1,\ldots,v_5$ of a \mixedconfig.}
       \label{fig:mixed-configuration}
   \end{figure}
   We now consider the 146 children of a pair $\langle u,u' \rangle$ at depth $d'\leq d-2$ in the linear layout of $G'$ induced by the linear layout of $G$, grouped into 73 pairs. Consider the following configuration. The pair $\langle u,u'\rangle$ has five pairs $\langle v_i,v'_i\rangle$ (for $1 \leq i \leq 5$) of children and pair $\langle v_i,v'_i\rangle$ has a child $z_i$. We call this is a  \emph{\mixedconfig} if \begin{inlineenum}
   \item $u,u',v'_i\prec v_i$, \item $(u,v_i)$ is orange while $(u',v_i)$ is blue,
   \item all edges $(v_i,z_i)$ (for $1 \leq i \leq 5$) have the same color;
   \end{inlineenum} see \cref{fig:mixed-configuration}.
   
   \begin{claim}\label{claim:mixed_config_blue}
        In the mixed layout of $G'$, there is no \mixedconfig with edges $(v_i,z_i)$ (for $1 \leq i \leq 5$) being blue.
   \end{claim}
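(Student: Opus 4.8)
The plan is to assume such a \mixedconfig with all edges $(v_i,z_i)$ blue exists and derive a contradiction by producing either two nesting edges in the queue (blue edges) or two crossing edges in the stack (orange edges). Everything takes place inside $G'$, so \cref{claim:orange,claim:blue,claim:edgesInGPrime} are available: no orange child of a pair survives in $G'$, every surviving blue child lies between its parents, and in particular each $v_i'$ -- being a child of $\langle u,u'\rangle$ -- is either bicolored or a blue child squeezed between $u$ and $u'$. I would fix the order of $u$ and $u'$ (the two symmetric cases of \cref{fig:mixed-configuration}) and relabel so that $v_1\prec v_2\prec\cdots\prec v_5$; by condition~(i) all five $v_i$ lie to the right of both $u$ and $u'$, and the blue edges $(u',v_i)$ form a star that, sharing the endpoint $u'$, pairwise do not nest.

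First I would locate each child $z_i$ relative to this blue star. Fix a \emph{middle} index $i\in\{2,3,4\}$, so that $v_1\prec v_i\prec v_5$. If $u'\prec z_i\prec v_5$, then both endpoints of the blue edge $(v_i,z_i)$ lie strictly inside the blue edge $(u',v_5)$, so $(u',v_5)$ nests $(v_i,z_i)$; if instead $z_i\prec u'$, then $z_i\prec u'\prec v_1\prec v_i$ and the blue edge $(v_i,z_i)$ nests $(u',v_1)$. Both are forbidden in the queue, so $z_i$ must lie to the right of all of $v_1,\dots,v_5$. Since $z_i$ then lies to the right of its own parent $v_i$, \cref{claim:edgesInGPrime:2} forbids $z_i$ from being a blue child; hence $z_i$ is bicolored and the edge $(v_i',z_i)$ is \emph{orange}. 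Finally, to keep the blue edges $(v_i,z_i)$ of the middle indices from nesting one another, their right endpoints must respect the order of the $v_i$, i.e.\ $z_2\prec z_3\prec z_4$.

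The second step is to play the orange edges $(u,v_i)$ (orange by condition~(ii)) against the newly found orange edges $(v_i',z_i)$. Comparing the long orange edge $(u,v_5)$ with $(v_i',z_i)$, whose right endpoint lies beyond $v_5$, I get a crossing -- forbidden in the stack -- unless $v_i'\prec u$; and \cref{claim:blue,claim:edgesInGPrime:2} then force $v_i'$ to be bicolored, since a blue child would have to sit between $u$ and $u'$, contradicting $v_i'\prec u$. With all three $v_i'\prec u$ I would next compare the orange edges $(v_i',z_i)$ among themselves and with the second (orange) edge at each now-bicolored $v_i'$, using that the $z_i$ are increasing while the admissible placements of the $v_i'$ and the colours of their two edges are tightly constrained, in order to force a crossing in every remaining subcase.

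The hard part will be exactly this last bookkeeping: the configuration fixes the $v_i$ but leaves the fine positions of the $z_i$ and $v_i'$ and the colours of the two edges at each $v_i'$ undetermined, so the argument fans out into subcases according to whether $(u,v_i')$ or $(u',v_i')$ is orange and whether $v_i'\prec u$, mirroring the two layouts of \cref{fig:mixed-configuration}. Carrying five pairs rather than the three one might naively expect is what guarantees enough middle indices that, once the degenerate placements are excluded, a forbidden nesting or crossing is unavoidable in each subcase.
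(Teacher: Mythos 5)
Your first half is sound and coincides with the paper's opening moves: for the middle indices $i\in\{2,3,4\}$ you correctly force $v_5\prec z_i$ by playing $(v_i,z_i)$ against the blue edges $(u',v_1)$ and $(u',v_5)$, conclude via \cref{claim:edgesInGPrime} that $z_2,z_3,z_4$ are bicolored so that $(v'_2,z_2),(v'_3,z_3),(v'_4,z_4)$ are orange, and push $v'_2,v'_3,v'_4$ to the left of $u$ by crossing against the orange edge $(u,v_5)$. Your extra observation $z_2\prec z_3\prec z_4$ is also correct and useful.

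There is, however, a genuine gap, in two places. First, your claim that $v'_i\prec u$ together with \cref{claim:blue,claim:edgesInGPrime} forces $v'_i$ to be bicolored is false in the second layout of \cref{fig:mixed-configuration}: when $u'\prec u$, a blue child $v'_i$ may legitimately sit between $u'$ and $u$, which is perfectly compatible with $v'_i\prec u$. So ``all three $v'_i$ bicolored'' cannot be assumed, and the subcase fan-out you describe (which parent-edge of each bicolored $v'_i$ is orange) is not exhaustive. Second --- and this is the crux --- the concluding contradiction is precisely what you defer to ``bookkeeping,'' yet that is where the remaining content of the claim lies, and it does not reduce to routine case-checking of your stated subcases. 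The paper closes it as follows: from non-crossing of the orange edges $(v'_i,z_i)$ together with $z_2\prec z_3\prec z_4$ one first gets the ordering $v'_4\prec v'_3\prec v'_2\prec u$, and then splits on the position of $u'$ relative to $v'_3$. If $v'_3\prec u'$, then $v'_4$ lies to the left of \emph{both} parents, hence really is bicolored, and its orange parent-edge --- whichever of $(v'_4,u)$, $(v'_4,u')$ it is --- crosses the orange edge $(v'_3,z_3)$. If instead $u'\prec v'_3$, then $u'\prec v'_3\prec v'_2\prec u\prec v_1$, and the single edge $(v'_3,u)$ yields a contradiction with no assumption on the type of $v'_3$: if blue it is nested by the blue $(u',v_1)$, and if orange it crosses the orange $(v'_2,z_2)$. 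Without an argument of this shape (in particular, one that handles the possibly-blue $v'_i$ in the $u'\prec u$ layout), your sketch does not yet constitute a proof.
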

   \begin{claimproof}
        \begin{figure}
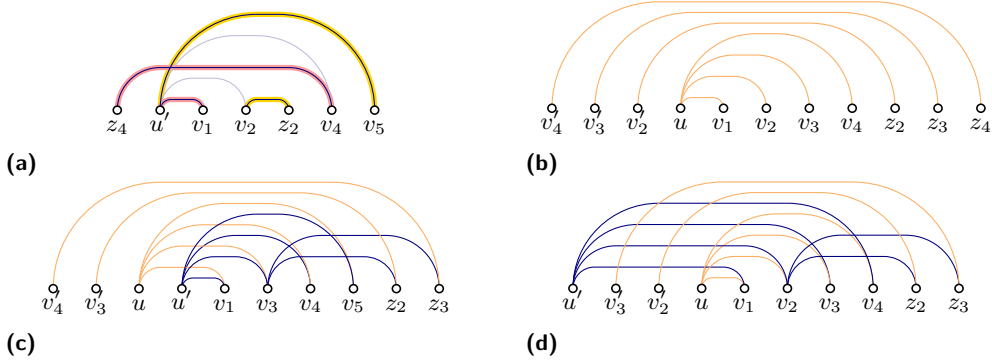

            \centering%
                \begin{subfigure}[b]{0.45\textwidth}
            \centering
            \includegraphics[page=32]{linear_layout_figures}
            \caption{\nolinenumbers}
            \label{fig:right-blue-0}
            \end{subfigure}
            \hfil
            \begin{subfigure}[b]{0.45\textwidth}
            \centering
            \includegraphics[page=33]{linear_layout_figures}
            \caption{\nolinenumbers}
            \label{fig:right-blue-01}
            \end{subfigure}
            \hfil
            \begin{subfigure}[b]{0.45\textwidth}
            \centering
            \includegraphics[page=21]{linear_layout_figures}
            \caption{\nolinenumbers}
            \label{fig:right-blue-1}
            \end{subfigure}
            \hfil
            \begin{subfigure}[b]{0.45\textwidth}
            \centering
            \includegraphics[page=22]{linear_layout_figures}
            \caption{\nolinenumbers}
            \label{fig:right-blue-2}
            \end{subfigure}
            \caption{The edges $ v_i z_i $ are blue. Connecting $ v'_3 $ to its parents $ u $ and $ u' $ yields a contradiction.}
            \label{fig:right-blue}
        \end{figure}
    
        Observe that, for $2 \leq i \leq 4$, $ v_5 \prec z_i $ as otherwise $(v_i,z_i)$ nests $(u',v_1)$ or is nested by $(u',v_5)$; see \cref{fig:right-blue-0}.
        Thus by Claim~\ref{claim:edgesInGPrime}\eqref{claim:edgesInGPrime:2}, vertices $z_2, z_3, z_4 $ are bicolored and therefore edges $(v'_2 z_2)$, $(v'_3 z_3)$ and $ (v'_4,z_4) $ are orange.
        As orange edges may not cross, $ v'_2, v'_3, v'_4 $ are to the left of $ u $ (we already have $ v'_i \prec v_i $ by the definition of a \mixedconfig). In particular $ v'_4 \prec v'_3 \prec v'_2 \prec u $; see \cref{fig:right-blue-01}.
        Note that we do not know the position of $ u' $ in the vertex order so far. First assume that $v'_3 \prec u'$; see \cref{fig:right-blue-1}, where $u$ is drawn to the left of $u'$ as in~\cref{fig:mixed-configuration-1} (note that the following argument also applies if $u' \prec u$ as in~\cref{fig:mixed-configuration-2}). As $v'_4\prec v'_3$, vertex $ v'_4 $ is to the left of both $u$ and $u'$ and therefore bicolored, by Claim~\ref{claim:edgesInGPrime}\eqref{claim:edgesInGPrime:2}. However, the edges $(v'_4,u)$ and $(v'_4,u')$ both cross the orange edge $(v'_3,z_3)$. Thus, we have $u' \prec v'_3$; see \cref{fig:right-blue-2}. Here it holds that $ u' \prec v'_3 \prec v'_2 \prec u \prec v_1 $. In this case, edge $(v'_3,u)$ is nested by the blue edge $(u',v_1)$, hence, it cannot be blue. On the other hand, it crosses the orange edge $(v'_2,z_2)$, so it cannot be orange either.
    \end{claimproof}

    \begin{claim}\label{claim:mixed_config_orange}
        In the mixed layout of $G'$, there is no \mixedconfig with edges $(v_i,z_i)$ (for $1 \leq i \leq 5$) being orange.
    \end{claim}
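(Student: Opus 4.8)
The plan is to mirror the argument for the blue case (\cref{claim:mixed_config_blue}), but with the roles of nesting and crossing interchanged, since the edges $(v_i,z_i)$ now lie in the stack (orange) rather than the queue. Throughout I assume the naming $v_1 \prec \cdots \prec v_5$ of \cref{fig:mixed-configuration}, so that $u,u',v'_i \prec v_i$ for all $i$, the edges $(u,v_i)$ and $(v_i,z_i)$ are orange, and the edges $(u',v_i)$ are blue. First I would \emph{localise the children $z_i$}. Since $u \prec v_i \prec v_5$ and the orange edge $(u,v_5)$ may not be crossed by the orange edge $(v_i,z_i)$, every $z_i$ with $i \le 4$ lies in the open interval $(u,v_5)$; in particular $u \prec z_i$. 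Comparing $(v_i,z_i)$ with each orange edge $(u,v_j)$ then shows that no $v_j$ can lie strictly between $v_i$ and $z_i$, as otherwise $(u,v_j)$ and $(v_i,z_i)$ would cross. Hence each $z_i$ with $i \le 4$ sits in the gap immediately left or right of $v_i$, that is $z_i \in (v_{i-1},v_i) \cup (v_i,v_{i+1})$, where I set $v_0 := u$.

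Second I would \emph{rule out the right-adjacent case}. Suppose some $z_k$ with $k \le 4$ satisfies $v_k \prec z_k \prec v_{k+1}$. Then the second edge $(v'_k,z_k)$ cannot be orange: avoiding a crossing with $(u,v_k)$ forces $v'_k \prec u$, but then $(v'_k,z_k)$ crosses the orange edge $(u,v_{k+1})$. So $(v'_k,z_k)$ is blue; avoiding a nesting with the blue edge $(u',v_{k+1})$ forces $v'_k \prec u'$, and then the blue edge $(v'_k,z_k)$ nests the blue edge $(u',v_k)$, a contradiction. Thus every $z_k$ with $k \le 4$ is \emph{left-adjacent}, $z_k \in (v_{k-1},v_k)$.

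Third, in the remaining all-left-adjacent situation I would \emph{pin down the $v'_k$} for each interior index $k \in \{2,3,4\}$. A symmetric check shows $(v'_k,z_k)$ cannot be blue: if $v'_k \prec u'$ it would nest $(u',v_{k-1})$, and if $u' \prec v'_k$ it would be nested inside $(u',v_k)$, each contradicting that the queue has no nesting. Hence $(v'_k,z_k)$ is orange, and avoiding crossings with the orange edges $(u,v_{k-1})$ and $(u,v_k)$ then forces $v'_k$ into the \emph{same} gap $(v_{k-1},v_k)$ as $z_k$. In particular each such $v'_k$ lies to the right of both $u$ and $u'$, so it is not a blue child placed between its parents and, by \cref{claim:edgesInGPrime}, it is bicolored; thus exactly one of its parent edges $(u,v'_k),(u',v'_k)$ is orange and the other is blue.

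Finally, the contradiction comes from these bicolored $v'_k$: the blue parent edge of $v'_k$ spans from $u$ or $u'$ across $v_1,\dots,v_{k-1}$ into the gap, while the orange parent edge, together with the orange edges $(v_k,z_k)$ and $(v'_k,z_k)$, occupies the same region, so one can always exhibit either two blue edges that nest or two orange edges that cross. \textbf{The main obstacle is precisely this last step:} the conflicting pair one finds depends on the relative order of $u$ and $u'$ (the two layouts of \cref{fig:mixed-configuration}), on which parent edge of $v'_k$ is orange, and on the order of $v'_k$ and $z_k$ within their gap, producing several sub-cases. Here the presence of five copies is essential: a pigeonhole over these binary choices guarantees that some surviving index yields the required nesting or crossing. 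By contrast, the localisation of Step 1 and the right-adjacent elimination of Step 2 are routine stack/queue arguments, so it is the bookkeeping of Step 4 that carries the weight, after which the claim — and, together with \cref{claim:mixed_config_blue}, the non-existence of any \emph{mixed configuration} — follows.
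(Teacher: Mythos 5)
Your Steps 1--3 are sound stack/queue deductions, but the proof as a whole has a genuine gap, and it stems from overlooking the key structural property of $G'$: by construction (the discarding step before \cref{claim:edgesInGPrime}), $G'$ contains \emph{no orange children at all}. Since $(v_i,z_i)$ is orange by assumption, the child $z_i$ would be an orange child if $(v'_i,z_i)$ were also orange; hence $(v'_i,z_i)$ is forced to be \emph{blue} for every $i$. Your Step 3 correctly shows that in the all-left-adjacent situation a blue $(v'_k,z_k)$ is impossible (it nests $(u',v_{k-1})$ or is nested by $(u',v_k)$) --- so at that very point the contradiction is already complete, and your own argument would finish the proof. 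Instead, unaware of the no-orange-children property, you conclude $(v'_k,z_k)$ is orange, which is a configuration that cannot occur in $G'$, and you then invest the remainder of the proof in analyzing it. That surviving branch, your Step 4, is moreover not actually carried out: the assertion that ``one can always exhibit either two blue edges that nest or two orange edges that cross,'' quantified over several unresolved sub-cases (relative order of $u$ and $u'$, which parent edge of $v'_k$ is orange, order of $v'_k$ and $z_k$ in their gap) with an unspecified pigeonhole, is exactly the load-bearing step you admit is the obstacle. As submitted, the proposal is therefore incomplete.

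For comparison, the paper's proof uses the blue-ness of $(v'_i,z_i)$ from the start and is much shorter: crossings with the orange edges $(u,v_{i\pm 1})$ localize $z_i$ so that $v_1 \prec z_2 \prec z_4 \prec v_5$; then, since $(u',v_1)$ and $(u',v_5)$ are blue, the blue edges $(v'_i,z_i)$ force $v'_i$ neither to precede $u'$ nor to lie in $(u',v_5)$, giving $v_5 \prec v'_2 \prec v'_3 \prec v'_4$; finally $v'_4$ lies right of both parents, so by \cref{claim:edgesInGPrime} it is bicolored, and its blue parent edge --- whether $(u,v'_4)$ or $(u',v'_4)$ --- nests the blue edge $(v'_2,z_2)$, a contradiction. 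Note also that the fifth pair is used there only to have enough indices for this chain; it plays no pigeonhole role of the kind your Step 4 envisions. The fix for your write-up is simple: invoke the no-orange-children property of $G'$ to make $(v'_k,z_k)$ blue, and terminate at your Step 3.
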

    \begin{claimproof}
        \begin{figure}
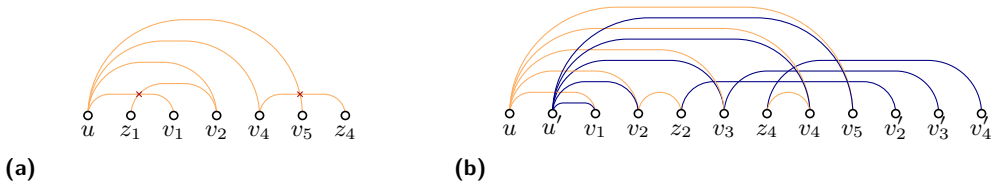

            \centering%
            \begin{subfigure}[b]{0.4\textwidth}
            \centering
            \includegraphics[page=34]{linear_layout_figures}
            \caption{\nolinenumbers}
            \label{fig:right-yellow-1}
            \end{subfigure}
            \hfil
            \begin{subfigure}[b]{0.55\textwidth}
            \centering
            \includegraphics[page=23]{linear_layout_figures}
            \caption{\nolinenumbers}
            \label{fig:right-yellow-2}
            \end{subfigure}
            \caption{The edges $ y_i c_i $ are orange. Connecting $ v'_4 $ to its parents $ u $ and $ u' $ yields a contradiction.}
            \label{fig:right-yellow}
        \end{figure}

        Consider edge $(v_i,z_i)$ for $2\leq i\leq 4$. If $z_i$ is to the left of $v_{i-1}$ then it crosses the orange edge $(u,v_{i-1})$, and if it is to the right of $v_{i+1}$ then it crosses the orange edge $(u, v_{i+1})$; see \cref{fig:right-yellow-1}. So $ v_1 \prec z_2 \prec z_4 \prec v_5 $ holds.
        Recall that $G'$ contains no orange children so for $1 \leq i\leq 5$, the edge $(v'_i z_i )$ is blue.
        Since $(u',v_1)$ is blue, for $2 \leq i \leq 5$,  $v'_i$ cannot precede $u'$, as otherwise $(v'_i,z_i)$ would nest $(u',v_1)$. Similarly, since $(u',v_5)$ is blue, for $1 \leq i \leq 4$, vertex $v'_i$ cannot be between $u'$ and $v_5$, as otherwise $(v'_i,z_i)$ would be nested by $(u',v_5)$. Thus, we conclude that $ v_5 \prec v'_2 \prec v'_3 \prec v'_4 $; see \cref{fig:right-yellow-2}.
        Since $ v'_4 $ is to the right of $ u$ and $ u'$, by  Claim~\ref{claim:edgesInGPrime}, it must be bicolored, 
        that is, either edge $(u,v_4')$ or edge $(u',v_4')$ is blue. However, both these edges nest the blue edge $(v'_2, z_2)$, a contradiction.
    \end{claimproof}

   Combining Claims~\ref{claim:mixed_config_blue} and~\ref{claim:mixed_config_orange}, we conclude that the linear layout of $G'$ contains no \mixedconfig. This property allows us to conclude the following:

   \begin{claim}\label{claim:atLeastFivePairs}
        Let $\langle u, u' \rangle$ be a pair in $G'$  at depth $d' \leq d-2$. In the linear layout of $G'$ there exist at least five pairs of children of $\langle u, u' \rangle$ with both vertices of each pair between $u$ and $u'$.
   \end{claim}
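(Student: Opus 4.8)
The plan is to argue by contradiction: assuming at most four pairs of children of $\langle u,u'\rangle$ lie entirely between $u$ and $u'$, I would construct a \mixedconfig and invoke Claims~\ref{claim:mixed_config_blue} and~\ref{claim:mixed_config_orange}. First I would recall that in $G'$ the pair $\langle u,u'\rangle$ still has at least $73$ of its $77$ pairs of children, and that by Claim~\ref{claim:edgesInGPrime} every surviving child is either a blue child placed between $u$ and $u'$ or a bicolored child. Hence any child that is \emph{not} between $u$ and $u'$ is necessarily bicolored. Under the contrapositive assumption, at least $73-4=69$ pairs own a child outside the interval spanned by $u$ and $u'$, that is, a child to the left of both parents or to the right of both parents.

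The observation I would exploit is that the definition of a \mixedconfig does \emph{not} fix which parent carries the orange edge: the vertex named $u$ is simply the endpoint joined to $v_i$ by the orange edge, and condition~(i) only asks $v_i$ to lie to the right of both parents and of its sibling. Therefore \emph{every} bicolored child lying to the right of both parents, taken together with its sibling, is a legal candidate for some $v_i$, irrespective of its orientation; and reflecting the whole linear order — which preserves queues and stacks and maps $G'$ to itself — reduces the children lying to the left of both parents to this same situation. So, after possibly reflecting, I may assume that at least $\lceil 69/2\rceil=35$ of the bad pairs have a child to the right of both parents. For each such pair I would set $v_i$ to be its \emph{rightmost} child, which is then automatically bicolored, to the right of both parents, and to the right of its sibling $v_i'$, so condition~(i) holds for free.

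Two further pigeonhole steps finish the argument. Among these $\geq 35$ pairs, at least $\lceil 35/2\rceil=18$ have $v_i$ joined by its orange edge to the \emph{same} parent; naming that parent $u$ and the other $u'$ secures conditions~(i)--(ii) simultaneously for all of them. Finally, since $\langle u,u'\rangle$ has depth $\leq d-2$, each candidate pair $\langle v_i,v_i'\rangle$ has depth $\leq d-1<d$ and thus possesses a common child in $G'$; choosing $z_i$ to be one such child and recording the colour of $(v_i,z_i)$, at least $\lceil 18/2\rceil=9\geq 5$ of the pairs agree on this colour, giving condition~(iii). Any five of them form a \mixedconfig, contradicting Claims~\ref{claim:mixed_config_blue} and~\ref{claim:mixed_config_orange}, and so at least five pairs must lie entirely between $u$ and $u'$.

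I expect the crux to be precisely the unification in the second paragraph: recognising that the asymmetric-looking template (orange to $u$, blue to $u'$, $v_i$ to the right of both parents) actually captures both right-side orientations once $u$ is read as ``the orange parent'', so that reflection together with the rightmost-child choice collapses all four position/orientation classes of outside children into the single case handled by the two mixed-configuration claims. What remains is purely numerical bookkeeping, verifying that the chain $77\to 73\to 69\to 35\to 18\to 9$ survives the four discarded pairs and the three successive halvings; this is exactly why the value $w=154$ (and the depth margin $d-2$, which guarantees the grandchildren $z_i$) is sufficient.
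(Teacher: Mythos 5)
Your proposal is correct and follows essentially the same route as the paper: assume at most four pairs lie between $u$ and $u'$, use Claim~\ref{claim:edgesInGPrime} to force outside children to be bicolored, and run the pigeonhole chain $69 \to 35 \to 18 \to 9 \to 5$ to build a \mixedconfig contradicting Claims~\ref{claim:mixed_config_blue} and~\ref{claim:mixed_config_orange}. Your per-pair rightmost-child selection is in fact a slightly cleaner way to secure $v'_i \prec v_i$ than the paper's choice of the nine rightmost non-sibling candidates, and your explicit reflection argument makes precise the paper's ``without loss of generality'' on the side of the parents.
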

   
   \begin{claimproof} 
        Assume for the sake of contradiction that $\langle u, u' \rangle$  have  at most four such pairs of children. Thus, for at least 69 pairs, at least one vertex is not located between $u$ and $u'$. In particular, at least 35 of these vertices either precede both $u$ and $u'$, or are placed to the right of both $u$ and $u'$. Assume without loss of generality that the latter applies.
        Recall that children to the right of both parents are bicolored by Claim~\ref{claim:edgesInGPrime}\eqref{claim:edgesInGPrime:2}.
        Now, among these 35 bicolored children, at least 18 are connected to the same parent $u$ or $u'$ with an orange edge; 
        without loss of generality to $u$. 
        These 18 children belong to at least 9 pairs, so we can select nine of them that do not form a pair, and in particular we select the nine rightmost ones, say $v_1$ to $v_9$.
        Observe that $v'_i\prec v_i$, for $1\leq i\leq 9$. Indeed, if $v_i'$ is bicolored and $v_i\prec v'_i$,  then $v_i'$ would have been chosen instead of $v_i$. On the other hand, if $v_i'$ is blue, then it is located between $u$ and $u'$ by  Claim~\ref{claim:edgesInGPrime}\eqref{claim:edgesInGPrime:2} and thus $v_i' \prec v_i$. 
        Now let $ z_i $ be a child of $ \langle v_i, v'_i\rangle $ ($1\leq i\leq 9$).
        By the pigeonhole principle, either five of the edges $(v_i,z_i)$ with $1 \leq i \leq 9$ are blue or five of these edges are orange. Hence a \mixedconfig is formed, contradicting 
        Claims~\ref{claim:mixed_config_blue} and~\ref{claim:mixed_config_orange}.
        %
        %
        %
        %
    \end{claimproof}

   Since $d > 2$, we conclude that there are pairs $\langle u,u' \rangle$ which have at least five pairs of children for which both vertices are located between $u$ and $u'$. We now  investigate this case.
    %
     
   %
   \begin{claim}\label{claim:noPairsAtGrandchildren}
   Let $\langle u,u'\rangle$ be a pair at depth $d'\leq d-2$. In the linear layout of $G'$ there is a pair $\langle v, v' \rangle$ of children of $\langle u,u'\rangle$, so that no child of $\langle v, v' \rangle$ is located between $v$ and $v'$.
   \end{claim}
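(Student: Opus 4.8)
The plan is to argue by contradiction using \cref{claim:atLeastFivePairs}. By that claim, the pair $\langle u,u'\rangle$ has at least five pairs of children $\langle v_i,v_i'\rangle$ with $v_i' \prec v_i$, all lying strictly between $u$ and $u'$ in $\prec$. Suppose for contradiction that \emph{every} such pair has a child located between its two vertices, and let $z_i$ denote such a child of $\langle v_i,v_i'\rangle$, so that $u \prec v_i' \prec z_i \prec v_i \prec u'$. Recalling that blue edges form a queue (no two nest) and orange edges form a stack (no two cross), I would show that this assumption forces either a blue nesting or an orange crossing.

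First I would analyze a single such pair. In the order $u \prec v' \prec z \prec v \prec u'$ there are exactly three conflicts among the six incident edges: the parent edge $(u,v)$ nests $(v',z)$, the parent edge $(u',v')$ nests $(v,z)$, and the parent edges $(u,v)$ and $(u',v')$ cross. Hence $(u,v),(v',z)$ are not both blue; $(u',v'),(v,z)$ are not both blue; and $(u,v),(u',v')$ are not both orange. Since $G'$ has no orange children (\cref{claim:edgesInGPrime}), the vertex $z$ is blue or bicolored. If $z$ were blue then both $(u,v)$ and $(u',v')$ would be forced orange and thus cross, a contradiction; so $z$ is bicolored. A short case distinction on which of $(v',z),(v,z)$ is blue then pins the pair to one of two color patterns: either (I) both $(u',v')$ and $(v',z)$ are blue while $(u,v)$ and $(v,z)$ are orange, or (II) both $(u,v)$ and $(v,z)$ are blue while $(u',v')$ and $(v',z)$ are orange.

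Next I would combine two pairs of the same type. For two type-(I) pairs, order them so that $v_i' \prec v_j'$; since $v_j' \prec z_j$ and $z_j \prec u'$, we get $v_i' \prec v_j' \prec z_j \prec u'$, so the blue edge $(u',v_i')$ nests the blue edge $(v_j',z_j)$, a forbidden queue nesting. Symmetrically, for two type-(II) pairs ordered so that $v_i \prec v_j$, we have $u \prec z_i \prec v_i \prec v_j$ (using $z_i \prec v_i$ and $u \prec z_i$), so the blue edge $(u,v_j)$ nests the blue edge $(v_i,z_i)$, again forbidden. Crucially, each of these nestings compares only the left (resp.\ right) vertices of the two pairs together with one grandchild, so it holds regardless of how the five pairs interleave between $u$ and $u'$. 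Thus at most one pair can be of type (I) and at most one of type (II), so at most two of the five pairs can have a child between their two vertices, contradicting that all five do. Consequently some pair $\langle v,v'\rangle$ has no child between $v$ and $v'$, as claimed.

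I expect the main obstacle to be the per-pair color analysis, namely verifying that in the order $u \prec v' \prec z \prec v \prec u'$ the three listed nesting/crossing relations are the only ones that constrain the coloring, and that together with the absence of orange children they force precisely the two patterns (I) and (II). Once these two types are established, the cross-pair step is robust: it relies only on the relative order of the left (resp.\ right) pair-vertices and a single grandchild, so no assumption on the global arrangement or interleaving of the pairs between $u$ and $u'$ is needed.
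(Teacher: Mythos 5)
Your proof is correct, and it takes a genuinely different route from the paper's. You verified the key step accurately: in the order $u \prec v' \prec z \prec v \prec u'$ the only conflicts among the six incident edges are that $(u,v)$ nests $(v',z)$, that $(u',v')$ nests $(v,z)$, and that $(u,v)$ crosses $(u',v')$; together with the fact that $G'$ contains no orange children these force exactly your two patterns (I) and (II), and the two pairwise blue nestings you exhibit are genuine. The paper also starts from Claim~\ref{claim:atLeastFivePairs}, but then pigeonholes on the colors of the five pairs themselves, using Claim~\ref{claim:edgesInGPrime}: since no pair in $G'$ is orange, either three pairs are blue or three are bicolored. In the blue case it shows by a case analysis over all placements of a child $z$ of a ``sandwiched'' blue pair that every placement creates a blue nesting, so three blue pairs cannot occur at all; in the bicolored case it makes an extremal choice (the leftmost vertex $x$ among the six joined to $u'$ by a blue edge), argues $x \prec v_1'$, and concludes that the blue edge $(x,u')$ nests over $\langle v_1, v_1'\rangle$, so no blue or bicolored child fits between them. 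Your argument instead negates the conclusion, rigidly colors each pair that has an internal child, and eliminates each of the two patterns with a single nesting between two pairs of the same type --- no extremal choice, no placement case analysis, and no use of the second part of Claim~\ref{claim:edgesInGPrime} (that blue children lie between their parents). It also gives a quantitatively stronger conclusion: at most two of the five pairs (one per pattern) can have a child between their vertices, so at least three pairs witness the claim. One small point worth making explicit in a final write-up: the five pairs from Claim~\ref{claim:atLeastFivePairs} are vertex-disjoint (they arise from the grouping of the children into disjoint pairs), which is what makes ordering them by their left (resp.\ right) vertices well-defined and guarantees that the nesting edge pairs share no endpoint.
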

  
   \begin{claimproof}
   By Claim~\ref{claim:atLeastFivePairs}, the pair $\langle u,u'\rangle$ has five pairs of children $\langle v_1,v'_1 \rangle, \ldots, \langle v_5,v'_5 \rangle$  for which both vertices are located between $u$ and $u'$. Assume without loss of generality that $u\prec u'$. As there are no orange pairs in $G'$, either three of them are blue or three of them are bicolored. 
     \begin{figure}
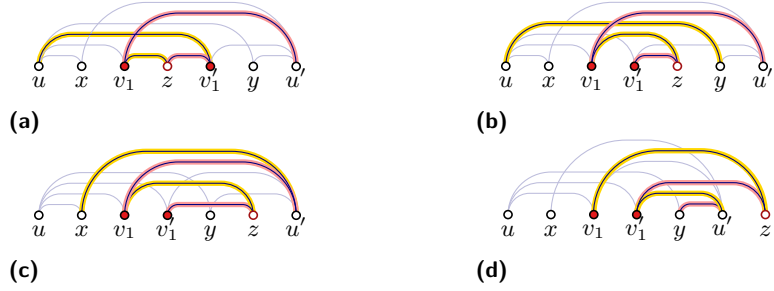

       \centering%
       \begin{subfigure}[b]{0.3\textwidth}
       \centering
       \includegraphics[page=16]{linear_layout_figures}
       \caption{\nolinenumbers}
       \label{fig:middle-blue-1}
       \end{subfigure}
       \hfil
       \begin{subfigure}[b]{0.3\textwidth}
       \centering
       \includegraphics[page=17]{linear_layout_figures}
       \caption{\nolinenumbers}
       \label{fig:middle-blue-2}
       \end{subfigure}

       \begin{subfigure}[b]{0.3\textwidth}
       \centering
       \includegraphics[page=18]{linear_layout_figures}
       \caption{\nolinenumbers}
       \label{fig:middle-blue-3}
       \end{subfigure}
       \hfil
       \begin{subfigure}[b]{0.3\textwidth}
       \centering
       \includegraphics[page=19]{linear_layout_figures}
       \caption{\nolinenumbers}
       \label{fig:middle-blue-4}
       \end{subfigure}
       \caption{If there are three blue pairs between $u$ and $u'$, one of them, namely $\langle v_1,v'_1 \rangle$, is between a vertex $x$ and a vertex $y$ belonging to other blue pairs. A child $z$ of $\langle v_1,v'_1 \rangle$ results in a nesting.}
       \label{fig:middle-blue}
   \end{figure}
   Assume first that three of these pairs, say $\langle v_1,v'_1 \rangle$, $\langle v_2,v'_2 \rangle$, $\langle v_3,v'_3 \rangle$ are blue. Then there is a blue pair among them, say without loss of generality $ \langle v_1, v'_1 \rangle $, such that $ u \prec x \prec v_1 \prec v'_1 \prec y \prec u' $ where $x,y \in \{v_2,v'_2,v_3,v'_3\}$.
   Now consider any child $ z $ of $ v_1 $ and~$ v'_1$. Since by definition $G'$ contains no orange children, $z$ is connected with a blue edge to either $v_1$ or $v'_1$. Vertex $z$ may be placed either to the left of $v_1$, or between $v_1$ and $v_1'$ or to the right of $v_1'$. If it is between $v_1$ and $v_1'$, the edge $(v_1,z)$ is nested by $(u,v_1')$ and the edge $(z,v_1')$ is nested by $(v_1,u')$;~
   see \cref{fig:middle-blue-1}. Thus $z$ cannot be connected to $v_1$ or $v_1'$ with a blue edge; a contradiction. 
   Assume without loss of generality that $z$ is to the right of $v_1'$. 
   It is easy to verify that for all possible placements of $z$, the blue edge connecting $z$ to $v_1$ or $v_1'$ either nests or is nested by an edge incident to one of $u$ and $u'$; see \cref{fig:middle-blue-2,fig:middle-blue-3,fig:middle-blue-4}. 
   
   \begin{figure}
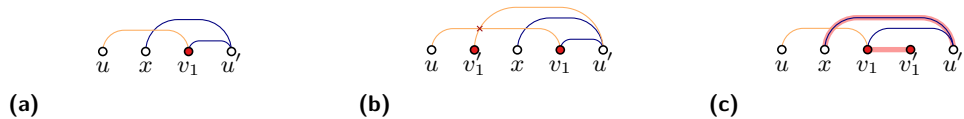

       \centering%
       \begin{subfigure}[b]
       {0.3\textwidth}
       \centering
       \includegraphics[page=29]{linear_layout_figures.pdf}
       \caption{\nolinenumbers}
       \label{fig:middle-bicolored_1}
       \end{subfigure}
       \hfil
       \begin{subfigure}[b]
       {0.3\textwidth}
       \centering
       \includegraphics[page=28]{linear_layout_figures.pdf}
       \caption{\nolinenumbers}
       \label{fig:middle-bicolored_2}
       \end{subfigure}
       \hfil
       \begin{subfigure}[b]{0.3\textwidth}
       \centering
       \includegraphics[page=20]{linear_layout_figures}
       \caption{\nolinenumbers}
       \label{fig:middle-bicolored_3}
       \end{subfigure}
       \caption{Three bicolored pairs $\langle v_i,v'_i\rangle$ of children of $\langle u, u'\rangle$ are between $u$ and $u'$. 
       	(a)~Without loss of generality vertex $v_1$ is nested by a blue edge. (b)~Vertex $v'_1$ does not precede $x$. (c)~Blue and bicolored children of $ \langle v_1, v'_1 \rangle $ are to the left or to the right of both parents.}
       \label{fig:middle-bicolored}
   \end{figure}

  Thus, there are three bicolored  pairs between $u$ and $u'$, say $\langle v_1,v'_1 \rangle$, $\langle v_2,v'_2 \rangle$, $\langle v_3,v'_3 \rangle$, such that $ v $ is a bicolored vertex while $ v' $ can be  blue or bicolored.
  Without loss of generality two of $ v_1, v_2, v_3 $, say $v_1$ and $v_2$ are connected to $ u $ with an orange edge. 
  Let $x$ be the leftmost among $v_1,v'_1,v_2,v'_2,v_3,v'_3$ in $\prec$ that is connected to $u'$ with a blue edge. 
  As a result for one of $v_1$ and $v_2$, say without loss of generality $v_1$, we have $v'_1 \neq x$ and $u \prec x \prec v_1 \prec u'$; see \cref{fig:middle-bicolored_1}.
  %
  %
  Observe that $ x \prec v'_1 $. Otherwise $(v'_1,u')$ cannot be blue by the choice of $ x $ and cannot be orange either, as it would cross the orange edge $(u, v_1)$; see \cref{fig:middle-bicolored_2}.
  Now the blue edge $(x, u')$ nests above $ v_1 $ and $ v'_1 $ and thus blue and bicolored children of $\langle v_1, v_1' \rangle $ cannot be between them; see \cref{fig:middle-bicolored_3}.  
  \end{claimproof}
  
  We are now ready to prove the theorem. 
  Namely, we have the initial pair $\langle u, u' \rangle$ of $G_0(w)$ at depth $0 \leq d-2$. Then, by Claim~\ref{claim:atLeastFivePairs} there is a pair $\langle v, v' \rangle$ at depth $1$ such that no child of $\langle v, v' \rangle$ is located between $v$ and $v'$.
  Since however $\langle v, v' \rangle$ is at depth $1 \leq d-2$, by  Claim~\ref{claim:atLeastFivePairs}, at least $10$ of the children of $\langle v, v' \rangle$ must be between $v$ and $v'$ in $\prec$; a contradiction.
 \end{proof}

In contrast to our result on the  queue number of bipartite planar graphs (\cref{thm:qn-bipartite-lb}), bipartite planar graphs admit $2$-stack layouts. Therefore, if we increase the number of stacks, we can easily construct a mixed linear layout of $G_d(w)$ (or of any bipartite planar graph). On the other hand, it remains open how many queues are needed if we allow at most one stack.
In the next section we approach this question by showing that the graph $G_d(w)$ constructed above (and even more generally any 2-degenerate quadrangulation) admits a $5$-queue layout.

\section{2-Degenerate Quadrangulations}
\label{sec:2-degenerate}

Note that the graph $G_d(w)$ defined in Section~\ref{sec:lower-bounds} is a 2-degenerate quadrangulation. Recall that it can be constructed from a 4-cycle by repeatedly adding a degree-2 vertex and keeping all faces of length $4$.
Hence, every 2-degenerate quadrangulation is a subgraph of a $4$-tree. 
This can also be observed by seeing $ G_d(w) $ as a $ (4, 1) $-stack graph, together with \cref{thm:product_fs-stacked}.
Thus, by the result of Wiechert~\cite{Wie17}, it admits a layout on $2^4-1 = 15$ queues. 
In this section, we improve this bound by showing that 2-degenerate quadrangulations admit $5$-queue layouts.


Our proof is constructive and uses a special type of tree-partition.
Let $T$ be a tree-partition of a given graph $G$, that is, an $H$-partition where $H$ is a rooted tree.
For every node $x$ of $T$, if $y$ is the parent node of $x$ in $T$, the set of
vertices in $T_y$ having a neighbor in $T_x$ is called the \df{shadow} of $x$; we say that the shadow is \df{contained} in node $y$.
The \df{shadow width} of a tree-partition is the maximum size of a shadow contained in a node of $T$; see \cref{fig:defs}.

\begin{figure}
	\centering
	\begin{subfigure}[b]{0.49\linewidth}
		\centering
		\includegraphics[page=1]{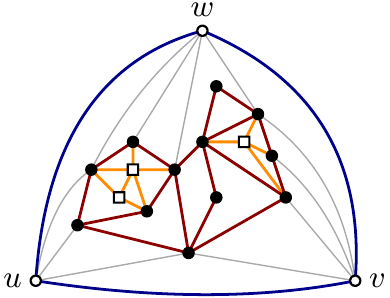}
	\end{subfigure}
	\begin{subfigure}[b]{0.49\linewidth}
		\centering
		\includegraphics[page=2]{tree_partition_concrete_example}
	\end{subfigure}
	\caption{A planar graph and its tree-partition of shadow width $4$.}
	\label{fig:defs}
\end{figure}

Let $\mathcal{S}=\{C_i \subseteq V~:~1\le i \le |\mathcal{S}|\}$ be a collection of vertex subsets for a graph $G=(V, E)$, 
and let $\pi$ be an order of $V$. Consider two elements from $\mathcal{S}$, 
$C_x=[x_1, x_2, \dots, x_{|C_x|}]$ and $C_y=[y_1, y_2, \dots, y_{|C_y|}]$, where the vertices are ordered according 
to $\pi$. We say that $C_x$ \df{precedes} $C_y$ with respect to $\pi$ if
$x_i \le y_i$, for all $1\leq i\leq \min(|C_x|, |C_y|)$;
we denote this relation by $C_1 \prec C_2$.
We say that $\mathcal{S}$ is \df{nicely ordered} if $\prec$ is a total
order on $\mathcal{S}$, that is, $C_i \prec C_j$ for all $1 \le i < j \le |S|$. 
A similar concept of clique orders has been considered in \cite{DMW05} and \cite{Pup20}.


\begin{restatable}{lemma}{mainlem}
	\label{lm:main}
	Let $G=(V, E)$ be a graph with a tree-partition $\big(T, \{T_x : x\in V(T)\}\big)$ of
	shadow width $k$.
	Assume that for every node $x$ of $T$, the following holds:
	\begin{inlineenum}
		\item there exists a $q$-queue layout of $T_x$ with vertex order $\pi$, and
		\item all the shadows contained in $x$ are nicely ordered with respect to $\pi$.
	\end{inlineenum}	
	Then $\qn(G) \le q + k$.
\end{restatable}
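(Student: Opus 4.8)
The plan is to construct an explicit queue layout of $G$ on $q+k$ queues by combining the per-node layouts $\pi$ with a global ordering of the bags obtained from a breadth-first traversal of the rooted tree $T$. I would separate the edges into intra-bag edges, handled by the $q$ given queues, and inter-bag edges, handled by $k$ new queues indexed by a rank in the shadow.

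First I would fix the global vertex order. Since $T$ is rooted, partition its nodes into BFS layers $L_0,L_1,\dots$ by distance from the root, place all vertices of bags in $L_0$ first, then all of $L_1$, and so on. Each layer thus occupies a contiguous block, and because inter-bag edges of $G$ join parent--child bags, every inter-bag edge runs between two \emph{consecutive} blocks. Within a layer I order the bags recursively: the bags of $L_{i+1}$ are grouped by parent, the groups appearing in the order their parents already have in $L_i$, and within one group the sibling bags are listed according to the total order $\prec$ on the shadows they induce in their common parent, which exists by hypothesis~(ii). Inside each bag $T_x$ I use its own order $\pi$, so every bag forms a contiguous sub-block ordered by $\pi$.

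Next I assign queues. The intra-bag edges of each $T_x$ are laid out with the $q$ queues of hypothesis~(i); as distinct bags occupy disjoint intervals, edges from different bags lie in disjoint intervals and cannot nest, so those $q$ queues serve all bags at once. For the inter-bag edges I use $k$ further queues $1,\dots,k$: an edge $(s,v)$ with $v\in T_x$ and $s$ in the parent bag goes to queue $r$, where $r$ is the rank of $s$ within the shadow of $x$ ordered by $\pi$ (well defined and at most $k$ since shadows have size at most $k$). The crux is showing that no two edges sharing a queue $r$ nest. Two such edges each run between a consecutive block-pair: if their block-pairs are non-adjacent the intervals are disjoint; if adjacent, a short check on which block each endpoint lies in rules out nesting. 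When both edges lie between the \emph{same} layer pair, edges from one child share the endpoint $s$ and never nest, edges from two siblings are controlled by the nice order of shadows (the two rank-$r$ vertices occur in the same left-to-right order as the two child bags), and edges from children of distinct parents inherit the same monotonicity directly from the block structure.

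The main obstacle is exactly this final verification, and it is where both hypotheses are consumed: the nice ordering of shadows guarantees that, for each fixed rank $r$, the attachment points $s$ in a parent bag advance monotonically in step with the child bags, converting every would-be nesting into the harmless monotone (crossing-type) pattern that a queue permits. Once monotonicity is confirmed in all cases, the $q$ intra-bag queues together with the $k$ inter-bag queues constitute a valid queue layout, yielding $\qn(G)\le q+k$.
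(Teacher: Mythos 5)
Your proposal is correct and matches the paper's proof in all essentials: the same global order (a BFS/lex-BFS traversal of $T$ with sibling bags sorted by the nice order of their shadows, each bag expanded according to $\pi$), the same split into $q$ intra-bag queues plus $k$ inter-bag queues indexed by the rank of the shadow vertex, and the same final observation that the nice order turns potential nestings among same-rank edges into crossings. The paper phrases the non-nesting argument more compactly—deriving from the $1$-queue layout of $T$ that two inter-bag edges can nest only when their parent-side endpoints share a bag—but your case analysis covers exactly the same ground.
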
	

\begin{proof}
In order
	to construct a desired queue layout of $G$, we first build a $1$-queue layout of the nodes of $T$.
	This is done by lexicographic breadth-first search (starting from the root of $T$)
	in which the nodes sharing the same parent are ordered with respect to the given nice order of shadows.	
	Then every node, $x$, in the layout is replaced by the vertices of bag $T_x$; the vertices
	within a bag are ordered with respect to the vertex order $\pi$ of the $q$-queue layout that is guaranteed by the lemma.
	This results in an order of the vertices of $G$ in which every vertex $v \in V$  
	is associated with a pair $\langle i_v, j_v\rangle$, where $i_v$ is derived from the $1$-queue layout of $T$ 
	and $j_v$ is derived from $\pi$. Clearly, the vertices are ordered lexicographically with respect
	to their pairs.
	
	Now we show how to obtain a $(q+k)$-queue layout using the resulting vertex order. To this end, 
	we use $q$ queues for the intra-bag edges and separate $k$ queues for inter-bag edges.
	It is easy to see that the intra-bag edges do not nest (as the bags are separated in the order); 
	therefore, we only need to verify that inter-bag edges fit in $k$ queues.
	
	Consider two edges, $e_1 = (u_1, v_1)$ and $e_2 = (u_2, v_2)$ of $G$. Since the layout is derived 
	from a $1$-queue layout of $T$, the edges may nest only when $u_1$ and $u_2$ are from the same bag;
	let $x$ be the bag such that $u_1, u_2 \in T_x$. Assign inter-bag edges rooted at $x$ to $k$ queues
	respecting the nice order of the shadows. That is, edges incident to the first vertices of the
	shadows are in the first queue, edges incident to the second vertices of the shadows are in the second
	queue and so on. Since the shadow order is nice and every shadow is of size $\le k$, there are at most $k$
	queues in the layout. 
 \end{proof}

Before applying \cref{lm:main} to 2-degenerate quadrangulations, we remark that the result provides a 
$5$-queue layout of planar 3-trees, as shown by Alam et al.~\cite{ABGKP18}. Indeed, a breadth-first search
(starting from an arbitrary vertex) on $3$-trees yields a tree-partition in which every bag is an
outerplanar graph. The shadow width of the tree-partition is $3$ (the length of each face), and it is
easy to construct a nicely ordered $2$-queue layout for every outerplanar graph~\cite{WPD04}. Thus, \cref{lm:main}
yields a $5$-queue layout for planar $3$-trees. 
Now we 
turn our attention to
2-degenerate quadrangulations.

\begin{restatable}{lemma}{bfsquad}
	\label{lm:bfs_quad}
	Every $2$-degenerate quadrangulation admits a tree-partition of shadow width $4$ such that
	every bag induces a leveled planar graph.
\end{restatable}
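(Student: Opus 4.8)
The plan is to construct the tree-partition directly from the recursive / degeneracy structure of a $2$-degenerate quadrangulation and then verify the two required properties: shadow width $4$ and leveled-planarity of each bag. First I would fix the $2$-degeneracy order $(v_1,\dots,v_n)$ of $G$ and recall that $G$ is built from an initial $4$-cycle by repeatedly inserting a degree-$2$ vertex while keeping every face a quadrangle; equivalently, each inserted vertex is attached to the two opposite corners of an existing face. I would root the construction at the initial $4$-cycle and let that $4$-cycle form the root bag $T_r$. For the inter-bag structure, the natural choice is to group vertices by a BFS-style notion of depth measured from the initial cycle: each time a degree-$2$ vertex is inserted into a face, it is one level deeper than its two parents (as in the $G_d(w)$ construction of \cref{sec:lower-bounds}). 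I would place all vertices inserted at the same depth inside a common face region into one bag, organized so that the parent face lies in the parent node of $T$.

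The key step is choosing the bags so that (i) each bag is a \emph{single} vertex set inducing a leveled planar graph, and (ii) the shadow of any node — the set of parent-bag vertices having a neighbor in the child bag — has size at most $4$. For the shadow bound, the crucial observation is that every vertex inserted into a face attaches only to (two of) the four boundary vertices of that face; hence the only vertices of the parent bag that can receive edges from a child bag are the at most four vertices lying on the shared face boundary. This is exactly where the bound $4$ comes from, and it mirrors the ``length of each face'' argument used for planar $3$-trees just above the lemma (where the shadow width was $3$). The main obstacle will be bookkeeping: I must argue that when several vertices are inserted into \emph{different} faces of the same level, the resulting groups can be assigned to tree nodes so that each node's shadow stays confined to four boundary vertices, rather than accumulating neighbors across many faces. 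I expect to handle this by making each node of $T$ correspond to the set of new vertices inside one face of the graph at the previous level, so that a node's shadow is contained in the $4$-cycle bounding that face.

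For the leveled-planarity of each bag, I would exploit the bichromatic (bipartite) BFS structure: all vertices placed at a fixed depth inside a fixed face lie on consecutive levels, and the edges internal to a bag join vertices on consecutive levels only. Concretely, within one node the induced subgraph consists of vertices inserted into a common region across consecutive BFS levels, and by the quadrangulation property every internal edge runs between adjacent levels, which is precisely the definition of \df{leveled planar} recalled in \cref{sec:definitions}. Since a leveled planar bipartite graph admits a $1$-queue layout, this is also what feeds into the hypotheses of \cref{lm:main} in the next step of the paper. I would finish by verifying that the three required conditions of a tree-partition hold: every vertex lies in exactly one bag, every intra-bag edge is internal to a node, and every inter-bag edge joins a node to its parent in $T$ (there are no edges skipping levels or jumping between sibling faces, again because each new vertex attaches only to the boundary of the single face it is inserted into).
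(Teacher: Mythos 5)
Your overall skeleton matches the paper's: assign a layer value to each vertex, let bags be groups of same-layer vertices confined to a face, observe that each such group attaches only to the (at most four) vertices of the bounding $4$-cycle, and feed the result into \cref{lm:main}. However, there are two genuine gaps at exactly the points where the paper has to work hardest. First, your layering is a BFS-style depth from the initial $4$-cycle, with every inserted vertex one level deeper than its parents. In a general $2$-degenerate quadrangulation (unlike $G_d(w)$), a vertex can be inserted into a face whose opposite corners have \emph{different} depths, and with the naive BFS depth the two parents of a child need not lie in a common bag, so the tree-partition condition (every inter-bag edge goes to the parent node) can fail, and a same-layer group need not be bounded by a single $4$-cycle of the previous layer. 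The paper explicitly warns that its layer values ``do not necessarily correspond to a BFS-layering'': in a \facetype{1}{0}{1}{0}-face it deliberately assigns the extreme children $u_1$ and $u_k$ layer value $1$ rather than the BFS value $2$, precisely so that the connected components of each layer stay attached to a single face. This deviation, organized through a case analysis of face types (\facetype{0}{0}{0}{0}, \facetype{0}{0}{1}{0}, \facetype{1}{0}{1}{0}, \facetype{0}{0}{1}{1}, \facetype{0}{1}{1}{1}, \facetype{1}{0}{1}{1}), is the idea your ``bookkeeping'' paragraph anticipates as an obstacle but does not resolve.

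Second, your argument for leveled planarity of the bags does not go through as stated. All vertices in one bag carry the \emph{same} layer value, so the global layers cannot serve as the levels of the leveled planar drawing; saying that ``every internal edge runs between adjacent levels'' conflates the two notions. Intra-bag edges join same-layer vertices (for instance, in a \facetype{0}{1}{1}{1}-face the edges from $v'$ to its children connect layer-$1$ vertices to layer-$1$ vertices), and nothing in the quadrangulation property alone hands you a level assignment inside the bag. The paper spends the bulk of its proof constructing these intra-bag leveled drawings: an induction that grows each component star by star, maintaining invariants about which boundary vertices sit on level $0$ and whether boundary paths form ``small'' or ``large'' angles, plus gluing claims for the remaining face types. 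Without some substitute for this construction --- or at least a proof that the bags you define are leveled planar --- the second half of the lemma remains unproven.
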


\begin{proof}
Recall that $2$-degenerate quadrangulations admit a recursive construction starting from a 4-cycle. At each step a vertex $v$ of degree $2$ is added inside a 4-face $f=(u_1,u_2,u_3,u_4)$ of the constructed subgraph, such that $v$ is connected to two opposite vertices of $f$, that is $v$ is connected either to $u_1$ and $u_3$ or to $u_2$ and $u_4$. This construction yields a total order $v_1<v_2<\dots<v_n$ on the vertices of the input $2$-generate quadrangulation $G$ of order $n$, such that $v_1,v_2,v_3,v_4$ are the vertices of the starting 4-cycle, and $v_{i\geq 5}$ is the vertex added at step $i$. This order is not unique, as one may permute the starting four vertices, or (possibly) select a different vertex to add at each step. Assume that vertices $u_1$, $u_2$, $\dots$ $u_k$ ($k\geq 1)$ that are added at steps $i_1<i_2<...<i_k$ (that is $u_s=v_{i_s}$), are connected to the same two vertices $v_j$ and $v_{j'}$ of $G$, with $j<j'<i_1$. Following the definitions given in Section~\ref{sec:lower-bounds}, we say that vertices $u_1$, $u_2$, $\dots$ $u_k$ are siblings with parents $v_j$ and $v_{j'}$. Note that at step $i_1$, any vertex among $u_1$, $u_2$, $\dots$ $u_k$ may be added, and in particular they can all be added at consecutive steps. 

Our goal is to assign a \emph{layer value} $\lambda(v)$  to each vertex $v$ of $G$  such that \begin{inlineenum} \item the subgraph $G_\lambda$ induced by vertices of layer $\lambda$ ($\lambda\geq0$) is a leveled planar graph, and \item the connected components of all subgraphs $G_\lambda$ define the bags of a tree-partition of shadow width $4$\end{inlineenum}. Note that we will assign layer values that do not necessarily correspond to a BFS-layering of $G$.

The four vertices of the starting $4$-cycle have layer value equal to $0$. Consider now a set of siblings $u_1,\dots,u_k$ with parents $v$ and $v'$, that are placed inside a $4$-face $f=(v,w,v',w')$ of the constructed subgraph. We compute the layer value of vertices $u_1,\dots,u_k$ as follows. Assume without loss of generality that $\lambda(v)\leq \lambda(v')$ and $\lambda(w)\leq \lambda(w')$. Further, assume that $u_1,\dots,u_k$ are such that the cyclic order of edges incident to $v$ are $(v,w), (v,u_1),(v,u_2),\dots,(v,u_k),(v,w')$ whereas the cyclic order of edges incident to $v'$ are $(v',w'), (v',u_k),(v',u_{k-1}),\dots,(v',u_1),(v',w)$; see~\cref{fig:level_faces}. We will insert these vertices in the order $u_1,u_k,u_2\dots,u_{k-1}$, that is, $u_1$ is inserted inside $f_1=f$, $u_k$ inside $f_k=(v,u_1,v',w')$ and every subsequent $u_i$ inside $f_i=(v,u_{i+1},u_k,w')$. Then the layer value of vertex $u_i$ is defined as $\lambda(u_i)=1+\min\{\lambda(x), x\in f_i\}$, for $1\leq i\leq k$. In the following, we will associate $f$ with the layer values of its vertices, that is we call $f$ a \emph{\facetype{$\lambda(v)$}{$\lambda(w)$}{$\lambda(v')$}{$\lambda(w')$}-face}, where vertices $v$ and $v'$ form a pair whose children are placed inside $f$.

Note that, by the definition of the layer values, if a vertex $u$ that is placed inside a face $f$  has layer $\lambda$, then all vertices of $f$ have layer value $\lambda-1$ or $\lambda$. This implies that connected components of layer $\lambda$ are adjacent to at most four vertices of layer value $\lambda-1$ (that form a $4$-cycle). 

To simplify the presentation, we focus on a \facetype{0}{0}{0}{0}-face $f_0$ and our goal is to determine the subgraph of layer value $1$ placed inside it; an analogous approach is used for the interior of a \facetype{$\lambda$}{$\lambda$}{$\lambda$}{$\lambda$}-face, with $\lambda>0$. 

\subparagraph{\facetype{0}{0}{0}{0}-face:} In this case, all vertices $u_1,\dots, u_k$ will have layer value $1$ (see Figure~\ref{fig:level_faces_a}). The newly created faces $(w,v,u_1,v')$ and $(w',v,u_k,v')$ are \facetype{0}{0}{1}{0}-faces, while all other faces $u_i,v,u_{i+1},v'$ for $i=1,\dots,k-1$ are $\facetype{1}{0}{1}{0}$-faces.

\subparagraph{\facetype{0}{0}{1}{0}-face:} We continue with a $\facetype{0}{0}{1}{0}$-face and then consider a $\facetype{1}{0}{1}{0}$-face. In a 
$\facetype{0}{0}{1}{0}$-face, we add only vertices $u_1$ and $u_k$, which creates the $\facetype{0}{0}{1}{1}$-faces $(w,v,u_1,v')$ and $(w,'v,u_k,v')$, and one \facetype{0}{1}{1}{1}-face $(v,u_1,v',u_k)$ (see Figure~\ref{fig:level_faces_b}). Note that the remaining siblings $u_2,\dots,u_{k-1}$ will be added as children of $v$ and $v'$ inside face $(v,u_1,v',u_k)$. 

\subparagraph{\facetype{1}{0}{1}{0}-face:} In a 
$\facetype{1}{0}{1}{0}$-face, vertices $u_1$ and $u_k$ have layer value $1$, while all other siblings $u_i$, $i=2,\dots,k-1$ have layer value $2$. Note that in a BFS-layering, vertex $u_1$ would have layer value $2$ instead of $1$. In this case, $(w,v,u_1,v')$ and $(w',v,u_k,v')$ are \facetype{0}{1}{1}{1}-faces, while the other faces cannot contain vertices of layer value $1$ (see Figure~\ref{fig:level_faces_c}). In total, we have two new types of faces, namely 
$\facetype{0}{0}{1}{1}$ and \facetype{0}{1}{1}{1}-faces. 

\subparagraph{\facetype{0}{0}{1}{1}-face:}In a $\facetype{0}{0}{1}{1}$-face, we add only vertex $u_1$, which creates two faces, namely the $\facetype{0}{0}{1}{1}$-face $(w,v,u_1,v')$, and the \facetype{0}{1}{1}{1}-face $(v,w',v',u_1)$. Note that the remaining vertices $u_2,\dots,u_{k}$ will be added as children of $v$ and $v'$ inside face $(v,w',v',u_1)$; see Figure~\ref{fig:level_faces_d}. 

\subparagraph{\facetype{0}{1}{1}{1}-face:} Finally, a \facetype{0}{1}{1}{1}-face is split into $k+1$ faces of type \facetype{1}{0}{1}{1} (see Figure~\ref{fig:level_faces_f}). In a \facetype{1}{0}{1}{1}-face, only vertex $u_1$ has layer value $1$, and face $(w,v,u_1,v')$ is of type \facetype{0}{1}{1}{1} (see Figure~\ref{fig:level_faces_e}). 

\begin{figure}
\newlength{\levelfaceswidth}
\setlength{\levelfaceswidth}{0.16\linewidth}
	\centering
	\begin{subfigure}[b]{\levelfaceswidth}
		\centering
		\includegraphics[page=1]{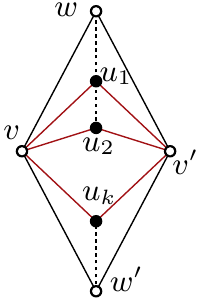}
		\caption{$\facetype{0}{0}{0}{0}$}
  \label{fig:level_faces_a}
	\end{subfigure}
	\hfill
 \begin{subfigure}[b]{\levelfaceswidth}
		\centering
		\includegraphics[page=2]{level_faces_small}
		\caption{$\facetype{0}{0}{1}{0}$}
  \label{fig:level_faces_b}
	\end{subfigure}
	\hfill
 \begin{subfigure}[b]{\levelfaceswidth}
		\centering
		\includegraphics[page=3]{level_faces_small}
		\caption{$\facetype{1}{0}{1}{0}$}
  \label{fig:level_faces_c}
	\end{subfigure}
	\hfill
 \begin{subfigure}[b]{\levelfaceswidth}
		\centering
		\includegraphics[page=4]{level_faces_small}
		\caption{$\facetype{0}{0}{1}{1}$}
  \label{fig:level_faces_d}
	\end{subfigure}
	\hfill
 \begin{subfigure}[b]{\levelfaceswidth}
		\centering
		\includegraphics[page=6]{level_faces_small}
		\caption{\facetype{0}{1}{1}{1}}
  \label{fig:level_faces_e}
	\end{subfigure}
	\hfill
 \begin{subfigure}[b]{\levelfaceswidth}
		\centering
		\includegraphics[page=5]{level_faces_small}
		\caption{\facetype{1}{0}{1}{1}}
  \label{fig:level_faces_f}
	\end{subfigure}
	\caption{Cases for faces that contain a set of children in their interior. Vertices at layer values $0$, $1$ and $2$ are drawn as black circles, black disks and red circles resp. Edges that connect vertices of the same (different) layer value are drawn black (red resp.). The dotted diagonals inside a face connect the parents of the children placed inside. Faces of type \facetype{0}{1}{1}{1} are shaded in green, and faces that contain vertices of layer value at least $2$ are shaded in gray.}
	\label{fig:level_faces}
\end{figure}

\begin{figure}
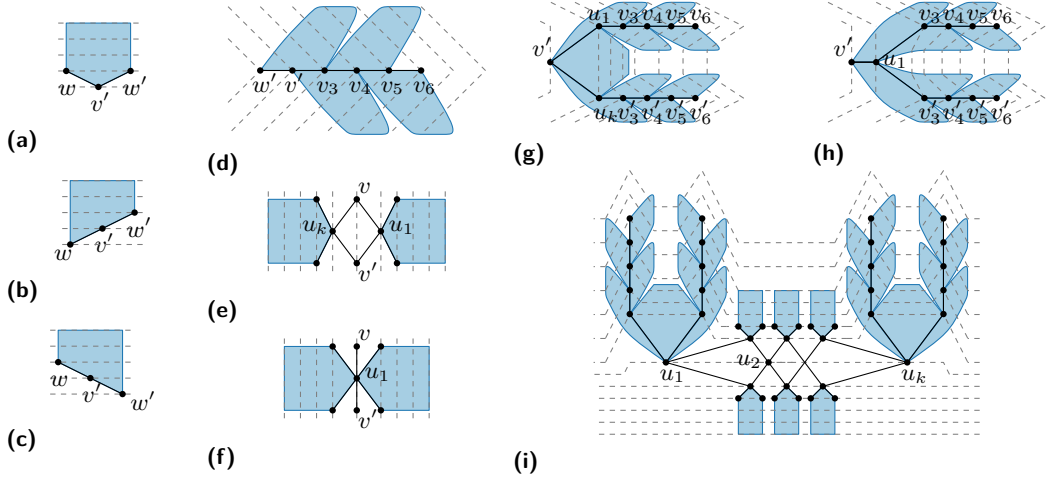

	\centering
	\begin{minipage}{0.18\textwidth}
        \centering
        \begin{subfigure}[b]{\textwidth}
            \centering
            \includegraphics[page=16]{level_faces_small}
            \caption{\nolinenumbers}
            \label{fig:level_face_0111_schematic_a_short}
            \label{fig:level_face_0111_schematic_a}
        \end{subfigure}
        
        \vspace{2ex}
        \begin{subfigure}[b]{\textwidth}
            \centering
            \includegraphics[page=17]{level_faces_small}
            \caption{\nolinenumbers}
            \label{fig:level_face_0111_schematic_b_short}
            \label{fig:level_face_0111_schematic_b}
        \end{subfigure}
        
        \vspace{2ex}
        \begin{subfigure}[b]{\textwidth}
            \centering
            \includegraphics[page=18]{level_faces_small}
            \caption{\nolinenumbers}
            \label{fig:level_face_0111_schematic_c_short}
            \label{fig:level_face_0111_schematic_c}
        \end{subfigure}
	\end{minipage}
	\begin{minipage}{0.8\textwidth}
	\begin{minipage}{.35\textwidth}
            \centering
            \begin{subfigure}[b]{\textwidth}
                \centering
                \includegraphics[page=21]{level_faces_small}
                \caption{\nolinenumbers}
                \label{fig:level_face_0011_schematic_short}
            \end{subfigure}
            
            \begin{subfigure}[b]{\textwidth}
                \centering
                \includegraphics[page=22]{level_faces_small}
                \caption{\nolinenumbers}
                \label{fig:level_face_1010_schematic_a_short}
            \end{subfigure}
            
            \begin{subfigure}[b]{\textwidth}
                \centering
                \includegraphics[page=23]{level_faces_small}
                \caption{\nolinenumbers}
                \label{fig:level_face_1010_schematic_b_short}
            \end{subfigure}
        \end{minipage}
        \begin{minipage}{.64\textwidth}
            \centering
            \begin{subfigure}[b]{0.45\textwidth}
                \centering
                \includegraphics[page=24]{level_faces_small}
                \caption{\nolinenumbers}
                \label{fig:level_face_0010_schematic_a_short}
            \end{subfigure}\hfill
            \begin{subfigure}[b]{0.45\textwidth}
                \centering
                \includegraphics[page=25]{level_faces_small}
                \caption{\nolinenumbers}
                \label{fig:level_face_0010_schematic_b_short}
            \end{subfigure}

            \begin{subfigure}[b]{\textwidth}
                \centering
                \includegraphics[page=26]{level_faces_small}
                \caption{\nolinenumbers}
                \label{fig:level_face_0000_schematic_short}
            \end{subfigure}
        \end{minipage}
	\end{minipage}
	\caption{Illustrations of $\Gamma_f$  when $f$ is  (a)--(c)~a \facetype{0}{1}{1}{1}-face, (d)~a \facetype{0}{0}{1}{1}-face, (e)--(f)~a \facetype{1}{0}{1}{0}-face, (g)--(h)~a \facetype{0}{0}{1}{0}-face, and (i)~a \facetype{0}{0}{0}{0}-face.}
	\label{fig:level_face_0111_schematic_short}
	\label{fig:level_face_0111_schematic}
\end{figure}

In order to create a layered planar drawing, we first consider faces of type \facetype{0}{1}{1}{1}, since they are contained in almost all other types of faces. The existence of $k$ children in such a face $f$ creates $k+1$ faces of type \facetype{1}{0}{1}{1}. Let $g_i$ be the face $(u_i,v,u_{i+1},v')$ of type \facetype{1}{0}{1}{1}, for $i=0,\dots,k$, where $u_0=w$ and $u_{k+1}=w'$. If another set of children $u_1',\dots,u'_{k'}$ is added inside $g_i$ then their parents are vertices $u_i$ and $u_{i+1}$ and only $u_1'$ has layer value $1$.  Hence the addition of $u'_1$ creates a new \facetype{0}{1}{1}{1}-face $g_i'$ (namely face $(v,u_i,u_1',u_{i+1})$). Further addition of children inside $g_i'$, will split $g_i'$ into faces of type \facetype{1}{0}{1}{1}. So, let $G_f$ be the subgraph induced by all vertices of layer value $1$ inside a \facetype{0}{1}{1}{1}-face $f$ (including its boundary vertices). 

\begin{restatable}{claim}{zerooneoneone}\label{claim:level_face_0111_draw}
The subgraph $G_f$ of a \facetype{0}{1}{1}{1}-face $f=(v,w,v',w')$ has a leveled planar drawing such that level $0$ contains one vertex among $w$, $v'$ and $w'$ (and no other vertex); see Figures~\ref{fig:level_face_0111_schematic_a_short}-\ref{fig:level_face_0111_schematic_c_short}.
\end{restatable}

\begin{claimproof}
Based on the previous observation, we will create a sequence $\{G_f^i\}_{1\leq i\leq s}$ such that $G_f^i$ is a subgraph of $G_f^{i+1}$ and $G_f^s$ is $G_f$. Let $H_f^i$ be the subgraph of $G$ induced by the vertices of $G_f^i$ and vertex $v$. In our construction, graphs $G_f^i$ will have the following properties: \begin{inlineenum}
\item\label{prop:0111:1} the interior faces of $G_f^i$ are of type \facetype{1}{1}{1}{1}, \item\label{prop:0111:2} $H_f^i$ contains all \facetype{1}{1}{1}{1}-faces of $G_f^i$ and every other interior face is of type \facetype{1}{0}{1}{1} with vertex $v$ on its boundary.
\end{inlineenum} 

At the first step $G_f^1$ consists of vertices $v'$, $w$, $w'$ and the $k_{v'}$ siblings of $v$ and $v'$. The subgraph so far is a star with $v'$ as center. It is not hard to see the $G_f^1$ satisfies Properties~\eqref{prop:0111:1} and~\eqref{prop:0111:2}. At step $i$ we select a \facetype{1}{0}{1}{1}-face $f_i$ of $H_f^{i-1}$ (that is not empty). Let $f_i=(v,u_i,v_i,u'_i)$, and let $w_i$ be the only child of layer value $1$ inside $f_i$ (with parents $v$ and $v_i$). Then $f_i$ is split into the \facetype{1}{1}{1}{1}-face $(v,w_i,v_i,u'_i)$ and the \facetype{0}{1}{1}{1}-face $f'_i=(v,u_i,w_i,u'_i)$. Further let $x_1,\dots x_k$, $k\geq 0$, be the children inside $f'_i$. Recall that $x_1,\dots x_k$ have layer value $1$ and their parents are $v$ and $w_i$. We obtain $G_f^i$ from $G_f^{i-1}$ by adding a star with center $w_i$ and vertices $x_1,\dots x_k$ as leafs, and by connecting $w_i$ to vertices $u_i$ and $u'_i$ so that vertices $x_1,\dots x_k$ are on the outer face of $G_f^i$. As face $f_i$ of $H_f^{i-1}$ is split into a \facetype{1}{1}{1}{1}-face and $k+1$ \facetype{1}{0}{1}{1}-faces, it follows that $G_f^i$ contains one more interior \facetype{1}{1}{1}{1}-face than $G_f^{i-1}$ (that is Property~\eqref{prop:0111:1} is satisfied), and all other interior faces of $H_f^{i}$ are of type \facetype{1}{0}{1}{1} with $v$ on their boundary (Property~\eqref{prop:0111:2}).

Now we show how to construct the leveled planar drawing of $G_f$ based on the sequence $\{G_f^i\}_{1\leq i\leq s}$. In particular, we will extend a leveled planar drawing $\Gamma_f^{i-1}$ of $G_f^{i-1}$ to a leveled planar drawing $\Gamma_f^{i}$ of $G_f^i$, for $1< i\leq s$. For every \facetype{1}{0}{1}{1}-face $f_j$ of $H_f^{i-1}$ (for some $j\geq i$) with vertices $v,u_j,v_j,u'_j$ we denote as $P_j$ the path $(u_j,v_j,u'_j)$, and say that $P_j$ is the \emph{boundary path} of $f_j$. Note that $P_j$ is along the outer face of $\Gamma_f^{i-1}$. We say that $P_j$ forms a \emph{small angle} in $\Gamma_f^{i-1}$ if $\ell(u_j)=\ell(u'_j)=\ell(v_j)+1$ holds, where $\ell(u)$ is the level of vertex $u$; see Figure~\ref{fig:level_face_small_angle}. We also say that $P_j$ forms a \emph{large angle} if either $\ell(v_j)=\ell(u_j)+1=\ell(u'_j)-1$ or $\ell(v_j)=\ell(u'_j)+1=\ell(u_j)-1$ holds; see Figures~\ref{fig:level_face_large_angle_a} and \ref{fig:level_face_large_angle_b}. Our algorithm maintains the following invariants: \begin{inlineenum}
\item\label{inv:0111:1} level $0$ contains one of vertices $w$, $v'$ or $w'$ of $f$ (and no other vertices), \item\label{inv:0111:2} vertices $v_i$ and $x_1,\dots,x_k$ of $G_f^i$ are placed in the outer face of  $\Gamma_f^{i-1}$, and \item\label{inv:0111:3} every path $P_j$ of $G_f^i$ forms either a small or a large angle along the outer face of  $\Gamma_f^i$
\end{inlineenum}. 

\begin{figure}
	\centering
	\begin{subfigure}[b]{0.3\linewidth}
		\centering
		\includegraphics[page=7]{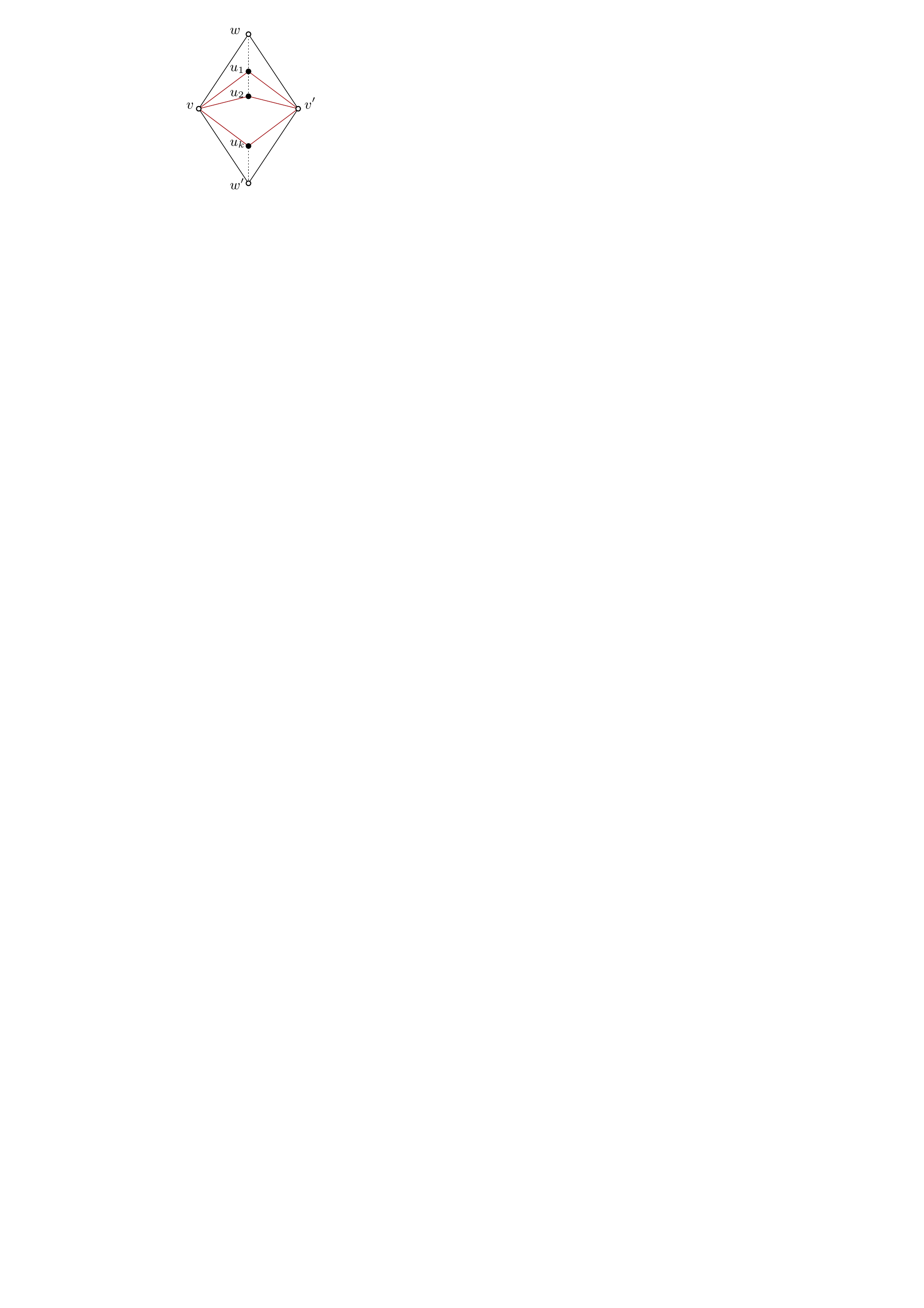}
		\caption{\nolinenumbers}
 \label{fig:level_face_small_angle}
	\end{subfigure}
\begin{subfigure}[b]{0.3\linewidth}
		\centering
		\includegraphics[page=8]{level_faces}
		\caption{\nolinenumbers}
 \label{fig:level_face_large_angle_a}
	\end{subfigure}
\begin{subfigure}[b]{0.3\linewidth}
		\centering
		\includegraphics[page=9]{level_faces}
		\caption{\nolinenumbers}
 \label{fig:level_face_large_angle_b}
	\end{subfigure}
	\caption{The boundary path $P_j=u_j,w_j,u'_j$ forms a (a) small angle, (b-c) a large angle in $\Gamma _f^i$. The boundary path is drawn green; horizontal dashed lines indicate levels.}
	\label{fig:level_face_angles}
\end{figure}

Three different leveled planar drawings of $G_f^1$ are shown in Figure~\ref{fig:level_face_0111_base}. We have that $f_1=(v,u_1,w_1,u'_1)$, where $u_1=w$, $u_1'=w'$ and $w_1=v'$ (refer to Figure~\ref{fig:level_faces_f}). It is not hard to see that Invariants~\eqref{inv:0111:1}, \eqref{inv:0111:2} and~\eqref{inv:0111:3} are satisfied in all three drawings. 
\begin{figure}
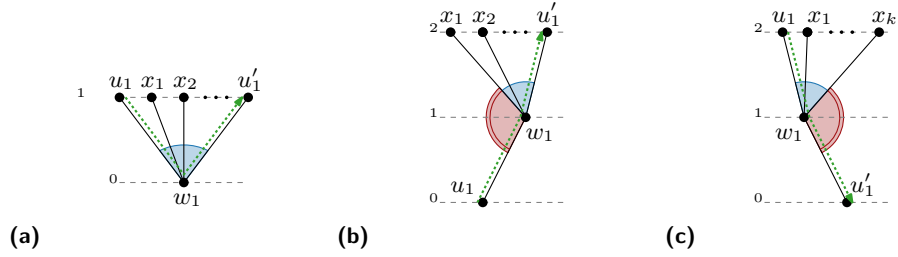

	\centering
	\begin{subfigure}[b]{0.3\linewidth}
		\centering
		\includegraphics[page=10]{level_faces}
		\caption{\nolinenumbers}
 \label{fig:level_face_0111_base_a}
	\end{subfigure}
\begin{subfigure}[b]{0.3\linewidth}
		\centering
		\includegraphics[page=11]{level_faces}
		\caption{\nolinenumbers}
 \label{fig:level_face_0111_base_b}
	\end{subfigure}
\begin{subfigure}[b]{0.3\linewidth}
		\centering
		\includegraphics[page=12]{level_faces}
		\caption{\nolinenumbers}
 \label{fig:level_face_0111_base_c}
	\end{subfigure}
	\caption{Possible drawings for $G_f^1$.}
	\label{fig:level_face_0111_base}
\end{figure}

So, assume that we have constructed the drawing $\Gamma_f^{i-1}$ for $G_f^{i-1}$ satisfying the invariants. $G_f^i$ is obtained from $G_f^{i-1}$ by adding a star with center $w_i$, leafs $x_1,\dots,x_k$, and such that $w_i$ is connected to the endpoints of the boundary path $P_i$ of $f_i$. By construction, the new vertices are added in the exterior of $\Gamma_f^{i-1}$, satisfying Invariant~\eqref{inv:0111:2}.
\begin{figure}
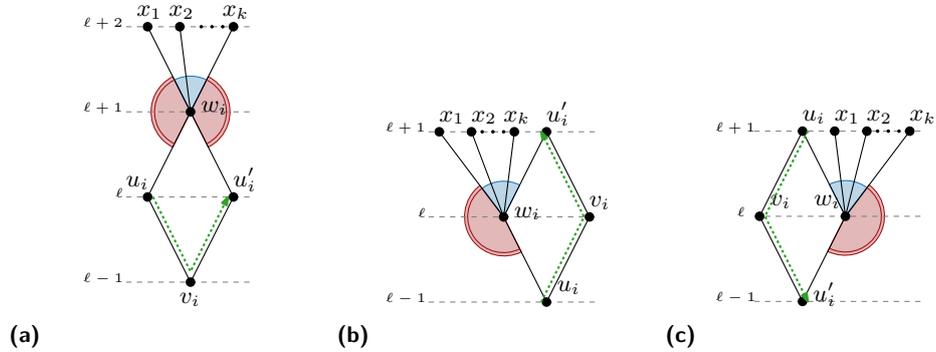

	\centering
	\begin{subfigure}[b]{0.3\linewidth}
		\centering
		\includegraphics[page=13]{level_faces}
		\caption{\nolinenumbers}
 \label{fig:level_face_0111_step_small}
	\end{subfigure}
\begin{subfigure}[b]{0.3\linewidth}
		\centering
		\includegraphics[page=14]{level_faces}
		\caption{\nolinenumbers}
 \label{fig:level_face_0111_step_large_a}
	\end{subfigure}
\begin{subfigure}[b]{0.3\linewidth}
		\centering
		\includegraphics[page=15]{level_faces}
		\caption{\nolinenumbers}
 \label{fig:level_face_0111_step_large_b}
	\end{subfigure}
	\caption{Extending $\Gamma_f^{i-1}$ to $\Gamma_f^i$, when $P_i$ forms a (a)~small angle, or (b)--(c)~ a large angle.}
	\label{fig:level_face_0111_step}
\end{figure}

We consider two cases depending on whether $P_i$ forms a small or a large angle in $\Gamma_f^{i-1}$. In the first case, let $\ell=\ell(u_i)=\ell(u'_i)$. We place $w_i$ at level $\ell+1$ and vertices $x_1,\dots,x_k$ at level $\ell+2$. 
In the constructed drawing the newly added vertices are placed at levels different from level $0$, satisfying Invariant~\eqref{inv:0111:1}, while the new boundary paths $(u_i,w_i,x_1)$ and $(x_k,w_i,u'_i)$ form large angles and all other form small angles, satisfying Invariant~\eqref{inv:0111:3}; see Figure~\ref{fig:level_face_0111_step_small}. 
Note that in the case where $k=0$, that is there are no child vertices inside $f_i$, the only vertex of $G_f$ inside $f_i$ is vertex $w_i$, and no new boundary paths are created. In the second case, let $\ell$ be the level of vertex $v_i$. We place $w_i$ at level $\ell$ and vertices $x_1,\dots, x_k$ at level $\ell+1$; refer to Figures~\ref{fig:level_face_0111_step_large_a} and \ref{fig:level_face_0111_step_large_b}. 
The newly added vertices are placed at layers different from level $0$, while  $(u_i,w_i,x_1)$ or  $(x_k,w_i,u'_i)$ forms a large angle, and all other new boundary paths  form small angles. Therefore Invariants~\eqref{inv:0111:1} and~\eqref{inv:0111:3} are satisfied in this case as well. Therefore, the constructed drawing of $G_f$ satisfies Invariant~\eqref{inv:0111:1} and the claim follows. 
\end{claimproof}

Figure~\ref{fig:level_face_0111_schematic} illustrates a schematic representation of different leveled planar drawings of $G_f$ induced by the layer value-$1$ vertices of a \facetype{0}{1}{1}{1}-face $f$, depending of the initial placement of vertices $v'$, $w$ and $w'$.
Now we turn our attention to faces of type $\facetype{0}{0}{1}{1}$. We will prove that such a face can be split into an empty $\facetype{0}{0}{1}{1}$-face and a series of \facetype{0}{1}{1}{1}-faces.


\begin{claim}\label{claim:level_face_0011_decompose}
    Let $f=v,w,v',w'$ be a $\facetype{0}{0}{1}{1}$-face. There exists a path $(v_1,v_2,\dots,v_{s+2})$ of $s+2$ layer value-$1$ vertices ($s\geq0$), such that the following hold:
    \begin{itemize}
        \item $v_1=w'$ and $v_2=v'$.
        \item Vertex $v_i$ is adjacent to $v$ ($w$) for odd (even, resp.) $i$, $1\leq i\leq s+2$.
        \item Face $f$ is split into $s$ faces $\{f_i\}_{1\leq i\leq s}$ of type \facetype{0}{1}{1}{1} and one empty $\facetype{0}{0}{1}{1}$-face $g$, where:
        \begin{itemize}
            \item $f_{2j}=(w,v_{2j+2},v_{2j+1},v_{2j})$, for $1\leq j\leq s/2$,  
            \item  $f_{2j-1}=(v,v_{2j-1},v_{2j},{v_{2j+1}})$, for $1\leq j\leq (s+1)/2$, and 
            \item $g=(v,w,v_{s+2},v_{s+1})$, if $s$ is even, or $g=(w,v,v_{s+2},v_{s+1})$ if $s$ is odd.
        \end{itemize}
    \end{itemize}
\end{claim}
\begin{claimproof}
Let $v_1=w'$ and $v_2=v'$. We will prove the claim using induction on the number $k$ of layer value-$1$ vertices inside $f$. If $f$ contains no layer value-$1$ vertices (that is $k=0$ and $f$ is empty), then $s=0$ and the claim holds with $g=f$. Assume that the claim holds for $k'<k$ layer value-$1$ vertices and that $f$ contains $k$ layer value-$1$ vertices. Then, vertex $u_1$ of Figure~\ref{fig:level_faces_d} (which  has layer value $1$) exists and is a child inside $f$ with parents $v$ and $v'$. We let $v_3=u_1$. Now $f$ is split into face $f_1=(v,v_1,v_2,v_3)$ which is of type \facetype{0}{1}{1}{1},  and the $\facetype{0}{0}{1}{1}$-face $g_1=(w,v,v_3,v_2)$. As $g_1$ contains at most $k-1$ layer value-$1$  vertices, $g_1$ contains a path $(v'_1,\dots, v'_{s'+2})$ of layer value-$1$ vertices that split $g_1$ into $s'$ \facetype{0}{1}{1}{1}-faces $f'_i$ ($i=1,\dots,s'$) and an empty $\facetype{0}{0}{1}{1}$-face $g'$ that satisfy the claim. For convenience, let $g_1=(v'',w'',v'_2,v'_1)$, where $v''=w$, $w''=v$, $v'_2=v_3$ and $v'_1=v_2$. We set $v_{i+1}=v'_i$, for $i=1,\dots,s'+2$ and we will prove that the path $v_1,\dots v_{s'+3}$ satisfies the properties of the claim. By definition, we have that $v_1=w'$ and $v_2=v'$. Also, in $g_1$, vertex $v'_i$ is connected to $v''$ if $i$ is odd and to $w''$ if $i$ is even. Hence $v_j=v'_{j-1}$ is connected to $w=v''$ when $j$ is even and to $v=w''$ when $j$ is odd, as required. For the faces we have that $f_1=(v,v_1,v_2,v_3)$, and we set $f_{i+1}=f'_i$, $i=1,\dots,s'$, and $g=g'$. 
We have that $f_{2j}=f'_{2j-1}=(v'',v'_{2j-1},v'_{2j},{v'_{2j+1}})=(w,v_{2j},v_{2j+1},v_{2j+2})$ and $f_{2j-1}=f'_{2j-2}=(w'',v'_{2j},v'_{2j-1},v'_{2j-2})=(v,v_{2j+1},v_{2j},{v_{2j-1}})$. On the other hand, as $s=s'+1$, $g=g'=(w'',v'',v'_{s'+2},v'_{s'+1})=(v,w,v_{s+2},v_{s+1})$, if $s'$ is odd and $s$  even, and  $g=g'=(v'',w'',v'_{s'+2},v'_{s'+1})=(w,v,v_{s+2},v_{s+1})$ if $s'$ is even and $s$ is odd. Hence the conditions of the claim hold. An example for $s=3$ and $s=4$ is shown in  Figures~\ref{fig:level_face_0011_decompose_odd} and ~\ref{fig:level_face_0011_decompose_even}.  
\end{claimproof}	

\begin{figure}
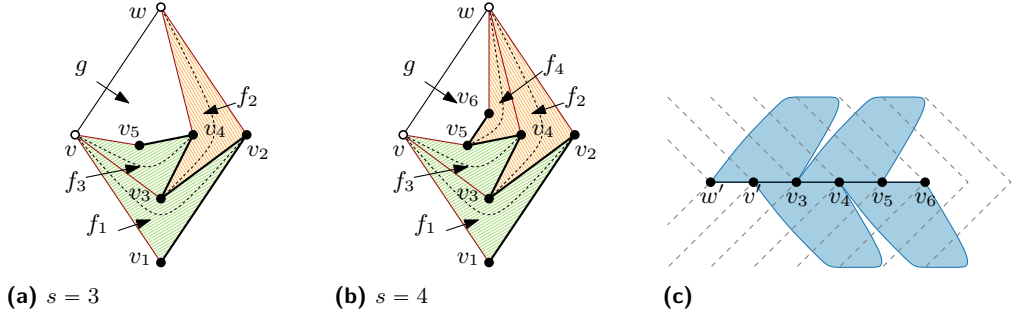

	\centering
	\begin{subfigure}[b]{0.3\linewidth}
		\centering
		\includegraphics[page=19]{level_faces}
		\caption{$s=3$}
 \label{fig:level_face_0011_decompose_odd}
	\end{subfigure}
\begin{subfigure}[b]{0.3\linewidth}
		\centering
		\includegraphics[page=20]{level_faces}
		\caption{$s=4$}
 \label{fig:level_face_0011_decompose_even}
	\end{subfigure}
\begin{subfigure}[b]{0.33\linewidth}
		\centering
		\includegraphics[page=21]{level_faces}
		\caption{\nolinenumbers}
 \label{fig:level_face_0011_schematic}
	\end{subfigure}
	\caption{(a-b)~Splitting a $\facetype{0}{0}{1}{1}$-face $f$into a series of \facetype{0}{1}{1}{1}-faces (highlighted in green and orange) and an empty $\facetype{0}{0}{1}{1}$-face, (c)~Schematic representations of $\Gamma_f$.}
	\label{fig:level_face_0011_decompose}
\end{figure}

In the following we compute a leveled planar drawing of a $\facetype{0}{0}{1}{1}$-face.

\begin{claim}\label{claim:level_face_0011_draw}
The subgraph $G_f$ of a $\facetype{0}{0}{1}{1}$-face $f=v,w,v',w'$ has a leveled planar drawing such that level $0$ contains only vertex $w'$; see Figure~\ref{fig:level_face_0011_schematic}.
\end{claim}
\begin{claimproof}
For every face $f_i$ ($i\geq 1$) of Claim~\ref{claim:level_face_0011_decompose}, we create a leveled planar drawing $\Gamma_i$ using Claim~\ref{claim:level_face_0111_draw}, such that $\Gamma_i$ has only vertex $v_i$ on level $i-1$ using the drawings of Figures~\ref{fig:level_face_0111_schematic_b} and \ref{fig:level_face_0111_schematic_c} for odd and even $i$, respectively. On each level $\ell$ the vertices are ordered as follows. Vertices of the same face $f_i$ (that are on level $\ell$) appear consecutively. For $i\neq j\geq 1$,  let $u_i$ be a vertex inside face $f_i$ and $u_j$ a vertex inside $f_j$. If $i$ is odd and $j$ is even, then $u_i$ appears before $u_j$ along $\ell$ (from left to right); if both $i$ and $j$ are odd (even) with $i<j$, then $u_i$ precedes (follows, resp.) $u_j$. Note that the derived drawing has only vertex $w'$ on level $0$ as claimed.
 \end{claimproof}

Next, we focus on faces of type $\facetype{1}{0}{1}{0}$. As the only layer value-$1$ vertices inside such faces belong to \facetype{0}{1}{1}{1}-faces, we have the following.

\begin{claim}
\label{claim:level_face_1010_draw}
The subgraph $G_f$ of a $\facetype{1}{0}{1}{0}$-face $f=v,w,v',w'$ has a leveled planar drawing such that level $0$ contains only vertices $v$ and $v'$, vertices of the \facetype{0}{1}{1}{1}-face $w,v,u_1,v'$ are drawn on levels above level $0$, while vertices of the \facetype{0}{1}{1}{1}-face $w',v,u_k,v'$ are drawn on levels below level $0$. In the special case where $u_1=u_k$, then also $u_1$ is on level $0$; see Figure~\ref{fig:level_face_1010_schematic}.
\end{claim}
\begin{claimproof}
    We draw the \facetype{0}{1}{1}{1}-face $(w,v,u_1,v')$ using Claim~\ref{claim:level_face_0111_draw} and such that vertex $u_1$ is the only vertex at level $1$ (or $0$ if $u_1=u_k$) and all other vertices are at levels greater than $1$ (or $0$, resp.), as in Figure~\ref{fig:level_face_0111_schematic_a}. Similarly for face $(w',v,u_k,v')$ we create a leveled planar drawing with  $u_k$ at level $-1$ (or $0$ if $u_1=u_k$) and all other vertices at levels below $-1$ (or $0$, resp.). Vertices $v$ and $v'$ are placed at level $0$ as shown in Figure~\ref{fig:level_face_1010_schematic_a}, if $u_1\neq u_k$, otherwise they are placed together with $u_1=u_k$ and such that $u_1$ is  between $v$ and $v'$; see Figure~\ref{fig:level_face_1010_schematic_b}.
\end{claimproof}

\begin{figure}[!ht]
	\centering
	\begin{subfigure}[b]{0.3\linewidth}
		\centering
		\includegraphics[page=22]{level_faces}
		\caption{\nolinenumbers}
 \label{fig:level_face_1010_schematic_a}
	\end{subfigure}
\begin{subfigure}[b]{0.3\linewidth}
		\centering
		\includegraphics[page=23]{level_faces}
		\caption{\nolinenumbers}
 \label{fig:level_face_1010_schematic_b}
	\end{subfigure}
	\caption{Schematic representations of $\Gamma_f$ when $f$ is a $\facetype{1}{0}{1}{0}$-face with (a)~$u_1\neq u_k$ and (b)~$u_1=u_k$.}
	\label{fig:level_face_1010_schematic}
\end{figure}

For a face $f$ of type \facetype{0}{0}{1}{0}, recall that it consists of three faces; face $(w,v,u_1,v')$, which is of type $\facetype{0}{0}{1}{1}$, face $(v,u_1,v',u_k)$ of type \facetype{0}{1}{1}{1} and face $(w',v,u_k,v')$ of type $\facetype{0}{0}{1}{1}$. 

\begin{claim}
\label{claim:level_face_0010_draw}
The subgraph $G_f$ of a \facetype{0}{0}{1}{0}-face $f=v,w,v',w'$ has a leveled planar drawing such that level $0$ contains only vertex $v'$; see Figure~\ref{fig:level_face_0010_schematic}.
\end{claim}
\begin{claimproof}
    We use Claim~\ref{claim:level_face_0011_draw} to produce a leveled planar drawing of face $(w,v,u_1,v')$ with vertex $v'$ on level $0$ as shown in Figure~\ref{fig:level_face_0011_schematic}. A similar drawing is obtained for $(w',v,u_k,v')$ with $v'$ on level $0$. Now face $(v,u_1,v',u_k)$ is drawn as in Figure~\ref{fig:level_face_0111_schematic_a} with $v'$ on level $0$, using Claim~\ref{claim:level_face_0111_draw}. The three drawings can be glued together as depicted in Figure~\ref{fig:level_face_0010_schematic_a}, or Figure~\ref{fig:level_face_0010_schematic_b} depending on whether $u_1\neq u_k$ or $u_1=u_k$ (in which case the $\facetype{0}{1}{1}{1}$-face $(v,u_1,v',u_k)$  does not exist).
\end{claimproof}

\begin{figure}
	\centering
	\begin{subfigure}[b]{0.45\linewidth}
		\centering
		\includegraphics[page=24]{level_faces}
		\caption{\nolinenumbers}
 \label{fig:level_face_0010_schematic_a}
	\end{subfigure}
\begin{subfigure}[b]{0.45\linewidth}
		\centering
		\includegraphics[page=25]{level_faces}
		\caption{\nolinenumbers}
 \label{fig:level_face_0010_schematic_b}
	\end{subfigure}
	\caption{Schematic representations of $\Gamma_f$ when $f$ is a \facetype{0}{0}{1}{0}-face with (a)~$u_1\neq u_k$ and (b)~$u_1=u_k$.}
	\label{fig:level_face_0010_schematic}
\end{figure}

The only type of faces that we have not considered yet are $\facetype{0}{0}{0}{0}$-faces. We combine leveled planar drawings for faces $(w,v,u_1,v')$, $(w',v,u_k,v')$ and  faces $f_i=(u_i,v,u_{i+1},v')$ for $i=1,\dots,k-1$. We create drawings for the \facetype{0}{0}{1}{0}-faces $(w,v,u_1,v')$ and $(w',v,u_k,v')$ using Claim~\ref{claim:level_face_0010_draw}, such that $u_1$ and $u_k$ are the only vertices placed at level $0$. Then for every face $f_i$ ($i=1,\dots,k-1$), we use Claim~\ref{claim:level_face_1010_draw}; vertices $u_i$, for $i=1,\dots,k$ are placed at level $0$. We combine the drawings as shown in Figure~\ref{fig:level_face_0000_schematic}, by ordering the vertices on each level as follows. Vertices at level $\ell$ that belong to the same face appear consecutively along $\ell$. Vertices of face $(w,v,u_1,v')$, precede all vertices of faces $f_i$ for $i=1,\dots,k-1$, and vertices of $(w',v,u_k,v')$ appear last along $\ell$. For two faces $f_i$ and $f_j$ with $1\leq i<j\leq k-1$, we have that vertices of $f_i$ precede vertices of $f_j$. 

\begin{figure}
\centering
	\includegraphics[page=26]{level_faces}
	\caption{Schematic representations of $\Gamma_f$ when $f$ is a $\facetype{0}{0}{0}{0}$-face.}
	\label{fig:level_face_0000_schematic}
\end{figure}

So far, we focused on a $\facetype{0}{0}{0}{0}$ face $f$ and determined a leveled planar drawing of $G_f$, which is the subgraph induced by vertices of level $1$ inside $f$. Clearly, starting from any face with all vertices having the same layer value $\lambda$, we can compute a leveled planar drawing of the layer value-($\lambda+1$) subgraph of $G$ that is inside this face. Now we are ready to compute the tree-partition of a 2-generate quadrangulation $G$. 
We assign layer value equal to $0$ to  the vertices on the outer face of $G$, and compute the layer value of all other vertices. Let $G_\lambda$ denote the subgraph of $G$ induced by vertices of layer value $\lambda$ ($\lambda\geq 0$). We have that all edges of $G$ are either level edges (that is, belong to $G_\lambda$ for some value of $\lambda$), or connect subgraphs of consecutive layer values $\lambda$ and $\lambda+1$. In particular, each connected component $H_{\lambda+1}$ of $G_{\lambda+1}$ is located inside a $4$-face $f(H_{\lambda+1})$  of $G_\lambda$, and the vertices of $f(H_{\lambda+1})$ are the only vertices of $G_\lambda$ that are connected to the vertices  of that connected component of $H_{\lambda+1}$. Now, for each value of $\lambda$, we put the connected components of $G_\lambda$ into separate bags, and therefore each bag contains a leveled planar graph. For a bag that contains connected component $H_{\lambda}$, we define its parent to be the bag that contains the component where $f(H_\lambda)$ belongs to. As each connected component $H_{\lambda}$ lies in the interior of a single face $f(H_\lambda)$, the defined bags create a tree $T$ with root-bag consisting of the outer vertices of $G$ (with layer value $0$). Additionally, the shadow of each bag consists of at most four vertices, and therefore the shadow width of $T$ is at most $4$. The lemma follows.
\end{proof}

\degenerate*

\begin{proof}
	Combine \cref{lm:bfs_quad} with \cref{lm:main}. Observe that \emph{every} leveled planar graph admits a $1$-queue layout 
	in which the faces are nicely ordered with respect to the layout. That implies 
	that we can apply \cref{lm:main} with $q=1$ (the queue number of the bags) and $k=4$ (the shadow width
	of the tree-partition).
 \end{proof}	

\section{Open Questions}

In this work, we focused on the queue number of bipartite planar graphs and related subfamilies. 
Next we highlight a few questions for future work. 
\begin{itemize}
\item First, there is still a significant gap between our lower and upper bounds for the queue number of bipartite planar graphs. 

\item Second, although $2$-degenerate quadrangulations always admit $5$-queue layouts, the question of determining their exact queue number remains open. 

\item Third, for stacked quadrangulations, our upper bound relies on the strong product theorem. We believe that a similar approach as for $2$-degenerate quadrangulations could lead to a significant improvement. 

\item Perhaps the most intriguing questions are related to mixed linear layouts of bipartite planar graphs: One may ask what is the minimum $q$ so that each bipartite planar graph admits a $1$-stack $q$-queue layout. 

\item Finally, the recognition of $1$-stack $1$-queue graphs remains an important open problem even for bipartite planar graphs.
\end{itemize}

\bibliographystyle{splncs04}
\bibliography{references}

\begin{thebibliography}{10}
\providecommand{\url}[1]{\texttt{#1}}
\providecommand{\urlprefix}{URL }
\providecommand{\doi}[1]{https://doi.org/#1}

\bibitem{DBLP:conf/gd/AlamBG0P20}
Alam, J.M., Bekos, M.A., Gronemann, M., Kaufmann, M., Pupyrev, S.: Lazy queue
  layouts of posets. In: Auber, D., Valtr, P. (eds.) Graph Drawing and Network
  Visualization 2020. Lecture Notes in Computer Science, vol. 12590, pp.
  55--68. Springer (2020). \doi{10.1007/978-3-030-68766-3\_5}

\bibitem{ABGKP18}
Alam, J.M., Bekos, M.A., Gronemann, M., Kaufmann, M., Pupyrev, S.: Queue
  layouts of planar 3-trees. Algorithmica pp. 1--22 (2020).
  \doi{10.1007/s00453-020-00697-4}

\bibitem{ABKM20}
Angelini, P., Bekos, M.A., Kindermann, P., Mchedlidze, T.: On mixed linear
  layouts of series-parallel graphs. Theor. Comput. Sci.  \textbf{936},
  129--138 (2022). \doi{10.1016/j.tcs.2022.09.019}

\bibitem{AG11}
Auer, C., Glei{\ss}ner, A.: Characterizations of deque and queue graphs. In:
  Kolman, P., Kratochv{\'i}l, J. (eds.) Graph-Theoretic Concepts in Computer
  Science. pp. 35--46. Springer Berlin Heidelberg, Berlin, Heidelberg (2011)

\bibitem{BDDEW15}
Bannister, M.J., Devanny, W.E., Dujmovi{\'c}, V., Eppstein, D., Wood, D.R.:
  Track layouts, layered path decompositions, and leveled planarity.
  Algorithmica  \textbf{81}(4),  1561--1583 (2019).
  \doi{10.1007/s00453-018-0487-5}

\bibitem{DBLP:journals/siamcomp/BattistaFP13}
Battista, G.D., Frati, F., Pach, J.: On the queue number of planar graphs.
  {SIAM} J. Comput.  \textbf{42}(6),  2243--2285 (2013).
  \doi{10.1137/130908051}

\bibitem{BGR22}
Bekos, M., Gronemann, M., Raftopoulou, C.N.: An improved upper bound on the
  queue number of planar graphs. Algorithmica pp. 1--19 (2022).
  \doi{10.1007/s00453-022-01037-4}

\bibitem{DBLP:journals/algorithmica/BekosBKR17}
Bekos, M.A., Bruckdorfer, T., Kaufmann, M., Raftopoulou, C.N.: The book
  thickness of 1-planar graphs is constant. Algorithmica  \textbf{79}(2),
  444--465 (2017). \doi{10.1007/s00453-016-0203-2}

\bibitem{DBLP:journals/corr/abs-2204-11495}
Bekos, M.A., {Da Lozzo}, G., Hlinen{\'{y}}, P., Kaufmann, M.: Graph product
  structure for h-framed graphs. CoRR  \textbf{abs/2204.11495} (2022).
  \doi{10.48550/arXiv.2204.11495}

\bibitem{DBLP:journals/siamcomp/BekosFGMMRU19}
Bekos, M.A., F{\"{o}}rster, H., Gronemann, M., Mchedlidze, T., Montecchiani,
  F., Raftopoulou, C.N., Ueckerdt, T.: Planar graphs of bounded degree have
  bounded queue number. {SIAM} J. Comput.  \textbf{48}(5),  1487--1502 (2019).
  \doi{10.1137/19M125340X}

\bibitem{DBLP:journals/jocg/KaufmannBKPRU20}
Bekos, M.A., Kaufmann, M., Klute, F., Pupyrev, S., Raftopoulou, C.N., Ueckerdt,
  T.: Four pages are indeed necessary for planar graphs. J. Comput. Geom.
  \textbf{11}(1),  332--353 (2020). \doi{10.20382/jocg.v11i1a12}

\bibitem{DBLP:journals/jct/BernhartK79}
Bernhart, F., Kainen, P.C.: The book thickness of a graph. J. Comb. Theory,
  Ser. {B}  \textbf{27}(3),  320--331 (1979).
  \doi{10.1016/0095-8956(79)90021-2}

\bibitem{DBLP:journals/jgaa/BhoreGMN22}
Bhore, S., Ganian, R., Montecchiani, F., N{\"{o}}llenburg, M.: Parameterized
  algorithms for queue layouts. J. Graph Algorithms Appl.  \textbf{26}(3),
  335--352 (2022). \doi{10.7155/jgaa.00597}

\bibitem{DBLP:journals/jgaa/BiedlSWW99}
Biedl, T.C., Shermer, T.C., Whitesides, S., Wismath, S.K.: Bounds for
  orthogonal {3D} graph drawing. J. Graph Algorithms Appl.  \textbf{3}(4),
  63--79 (1999). \doi{10.7155/jgaa.00018}

\bibitem{DBLP:conf/swat/BoseMO22}
Bose, P., Morin, P., Odak, S.: An optimal algorithm for product structure in
  planar graphs. In: Czumaj, A., Xin, Q. (eds.) {SWAT} 2022. LIPIcs, vol.~227,
  pp. 19:1--19:14. Schloss Dagstuhl - Leibniz-Zentrum f{\"{u}}r Informatik
  (2022). \doi{10.4230/LIPIcs.SWAT.2022.19}

\bibitem{DBLP:journals/corr/abs-2206-02395}
Campbell, R., Clinch, K., Distel, M., Gollin, J.P., Hendrey, K., Hickingbotham,
  R., Huynh, T., Illingworth, F., Tamitegama, Y., Tan, J., Wood, D.R.: Product
  structure of graph classes with bounded treewidth. CoRR
  \textbf{abs/2206.02395} (2022). \doi{10.48550/arXiv.2206.02395}

\bibitem{CLR87}
Chung, F.R.K., Leighton, F.T., Rosenberg, A.L.: Embedding graphs in books: A
  layout problem with applications to {VLSI} design. SIAM Journal on Algebraic
  and Discrete Methods  \textbf{8}(1),  33--58 (1987)

\bibitem{CKN19}
de~Col, P., Klute, F., N{\"o}llenburg, M.: Mixed linear layouts: Complexity,
  heuristics, and experiments. In: Archambault, D., T{\'o}th, C.D. (eds.) Graph
  Drawing and Network Visualization. pp. 460--467. Springer International
  Publishing, Cham (2019). \doi{10.1007/978-3-030-35802-0\_35}

\bibitem{DBLP:journals/combinatorica/DujmovicEHMW22}
Dujmovic, V., Eppstein, D., Hickingbotham, R., Morin, P., Wood, D.R.:
  Stack-number is not bounded by queue-number. Comb.  \textbf{42}(2),  151--164
  (2022). \doi{10.1007/s00493-021-4585-7}

\bibitem{DBLP:journals/jgaa/DujmovicF18}
Dujmovi{\'{c}}, V., Frati, F.: Stack and queue layouts via layered separators.
  J. Graph Algorithms Appl.  \textbf{22}(1),  89--99 (2018).
  \doi{10.7155/jgaa.00454}

\bibitem{DJMMUW20}
Dujmovi{\'c}, V., Joret, G., Micek, P., Morin, P., Ueckerdt, T., Wood, D.R.:
  Planar graphs have bounded queue-number. Journal of the ACM (JACM)
  \textbf{67}(4),  1--38 (2020). \doi{10.1145/3385731}

\bibitem{DMW05}
Dujmovi{\'c}, V., Morin, P., Wood, D.R.: Layout of graphs with bounded
  tree-width. SIAM Journal on Computing  \textbf{34}(3),  553--579 (2005).
  \doi{10.1137/S0097539702416141}

\bibitem{WPD04}
Dujmovic, V., P{\'{o}}r, A., Wood, D.R.: Track layouts of graphs. Discret.
  Math. Theor. Comput. Sci.  \textbf{6}(2),  497--522 (2004)

\bibitem{DBLP:conf/gd/DujmovicW03}
Dujmovic, V., Wood, D.R.: Three-dimensional grid drawings with sub-quadratic
  volume. In: Liotta, G. (ed.) Graph Drawing 2003. Lecture Notes in Computer
  Science, vol.~2912, pp. 190--201. Springer (2003).
  \doi{10.1007/978-3-540-24595-7\_18}

\bibitem{DBLP:journals/dmtcs/FelsnerHKO10}
Felsner, S., Huemer, C., Kappes, S., Orden, D.: Binary labelings for plane
  quadrangulations and their relatives. Discret. Math. Theor. Comput. Sci.
  \textbf{12}(3),  115--138 (2010)

\bibitem{DBLP:journals/siamdm/HeathLR92}
Heath, L.S., Leighton, F.T., Rosenberg, A.L.: Comparing queues and stacks as
  mechanisms for laying out graphs. {SIAM} J. Discret. Math.  \textbf{5}(3),
  398--412 (1992). \doi{10.1137/0405031}

\bibitem{HR92}
Heath, L.S., Rosenberg, A.L.: Laying out graphs using queues. SIAM Journal on
  Computing  \textbf{21}(5),  927--958 (1992). \doi{10.1137/0221055}

\bibitem{DBLP:conf/gd/MerkerU20}
Merker, L., Ueckerdt, T.: The local queue number of graphs with bounded
  treewidth. In: Auber, D., Valtr, P. (eds.) Graph Drawing and Network
  Visualization 2020. Lecture Notes in Computer Science, vol. 12590, pp.
  26--39. Springer (2020). \doi{10.1007/978-3-030-68766-3\_3}

\bibitem{DBLP:journals/algorithmica/Morin21}
Morin, P.: A fast algorithm for the product structure of planar graphs.
  Algorithmica  \textbf{83}(5),  1544--1558 (2021).
  \doi{10.1007/s00453-020-00793-5}

\bibitem{bob}
Pupyrev, S.: A {SAT}-based solver for constructing optimal linear layouts of
  graphs, source code available at \url{https://github.com/spupyrev/bob}

\bibitem{Pup18}
Pupyrev, S.: Mixed linear layouts of planar graphs. In: Frati, F., Ma, K.L.
  (eds.) Graph Drawing and Network Visualization. pp. 197--209. Springer
  International Publishing, Cham (2018). \doi{10.1007/978-3-319-73915-1\_17}

\bibitem{Pup20}
Pupyrev, S.: Improved bounds for track numbers of planar graphs. Journal of
  Graph Algorithms and Applications  \textbf{24}(3),  323--341 (2020).
  \doi{10.7155/jgaa.00536}

\bibitem{DBLP:journals/combinatorics/UeckerdtWY22}
Ueckerdt, T., Wood, D.R., Yi, W.: An improved planar graph product structure
  theorem. Electron. J. Comb.  \textbf{29}(2) (2022). \doi{10.37236/10614}

\bibitem{Wie17}
Wiechert, V.: On the queue-number of graphs with bounded tree-width. Electr. J.
  Comb.  \textbf{24}(1),  P1.65 (2017). \doi{10.37236/6429}

\bibitem{DBLP:journals/corr/abs-2208-10074}
Wood, D.R.: Product structure of graph classes with strongly sublinear
  separators. CoRR  \textbf{abs/2208.10074} (2022).
  \doi{10.48550/arXiv.2208.10074}

\bibitem{DBLP:journals/jcss/Yannakakis89}
Yannakakis, M.: Embedding planar graphs in four pages. J. Comput. Syst. Sci.
  \textbf{38}(1),  36--67 (1989). \doi{10.1016/0022-0000(89)90032-9}

\bibitem{DBLP:journals/jctb/Yannakakis20}
Yannakakis, M.: Planar graphs that need four pages. J. Comb. Theory, Ser. {B}
  \textbf{145},  241--263 (2020). \doi{10.1016/j.jctb.2020.05.008}

\end{thebibliography}
\end{document}